\newcommand{\rmem}{\mathrm{em}}
\newcommand{\rmint}{\mathrm{int}}
\def\eqref#1{equation~\ref{#1}}
\def\1{\bm{1}}
\DeclareMathAlphabet{\mathsfit}{\encodingdefault}{\sfdefault}{m}{sl}
\SetMathAlphabet{\mathsfit}{bold}{\encodingdefault}{\sfdefault}{bx}{n}
\def\gK{{\mathcal{K}}}
\def\gO{{\mathcal{O}}}
\newcommand{\E}{\mathbb{E}}
\newcommand{\R}{\mathbb{R}}
\newcommand{\rmrm}{\mathrm{rm}}
\newcommand{\rmpm}{\mathrm{pm}}
\theoremstyle{plain}
\newtheorem{theorem}{Theorem}
\newtheorem{lemma}{Lemma}
\newtheorem{assumption}{Assumption}
\theoremstyle{definition}
\newtheorem{definition}{Definition}
\newtheorem{remark}{Remark}
\title{Stochastic interior-point methods for smooth conic optimization with applications}
\author{Chuan He\thanks{Department of Mathematics, Link\"oping University, Sweden (email: \url{chuan.he@liu.se}). The work of this author was partially supported by the Wallenberg AI, Autonomous Systems and Software Program (WASP) funded by the Knut and Alice Wallenberg Foundation.}
\and
Zhanwang Deng\thanks{Academy for Advanced Interdisciplinary Studies, Peking University, China (email: \url{dzw_opt2022@stu.pku.edu.cn}).}
}
\date{
December 18, 2024 (Revised: May 23, 2025; August 20, 2025)
}
\begin{document}
\maketitle

\begin{abstract}
Conic optimization plays a crucial role in many machine learning (ML) problems. However, practical algorithms for conic constrained ML problems with large datasets are often limited to specific use cases, as stochastic algorithms for general conic optimization remain underdeveloped. To fill this gap, we introduce a stochastic interior-point method (SIPM) framework for general conic optimization, along with four novel SIPM variants leveraging distinct stochastic gradient estimators. Under mild assumptions, we establish the iteration complexity of our proposed SIPMs, which, up to a polylogarithmic factor, match the best-known {results} in stochastic unconstrained optimization. Finally, our numerical experiments on robust linear regression, multi-task relationship learning, and clustering data streams demonstrate the effectiveness and efficiency of our approach.
\end{abstract}

\noindent{\small \textbf{Keywords:} Conic optimization, stochastic interior-point methods, {iteration complexity}, robust linear regression, multi-task relationship learning, clustering data streams}

\medskip

\noindent{\small \textbf{Mathematics Subject Classification:} 90C25, 90C30}

\section{Introduction}
Conic optimization covers a broad class of optimization problems with constraints represented by convex cones, including common forms such as linear constraints, second-order cone constraints, and semidefinite constraints. Over the years, this class of optimization problems has found applications across various fields, including control \cite{fares2001augmented}, energy systems \cite{zohrizadeh2020conic}, combinatorial optimization \cite{wolkowicz2012handbook}, and machine learning (ML) \cite{sra2011optimization}. In practice, the efficient handling of traditional conic optimization problems has been extensively studied for decades, with interior-point methods (IPMs) standing out due to their ability to effectively and elegantly solve a wide range of conic constrained problems within a unified framework (see the monograph by \cite{NN1994IPM}).


\subsection{Stochastic optimization with conic constraints for machine learning}\label{ss:eg}

Existing studies on IPMs primarily focus on the deterministic regime, despite the recent widespread applications of stochastic conic optimization in ML. These applications include multi-task relationship learning \cite{argyriou2008convex,zhang2012convex}, robust learning with chance constraints \cite{shivaswamy2006second,xu2012optimization}, kernel learning \cite{bach2004multiple}, and clustering data streams \cite{bidaurrazaga2021k,peng2007approximating,sun2021convex}. Next, we briefly highlight two examples.

\paragraph{Multi-task relationship learning}
Multi-task learning is an ML paradigm where related tasks are learned together to improve generalization by sharing information (see \cite{zhang2021survey}). In many applications, task correlations are not explicitly available. To learn these correlations from data, a regularization framework is proposed, where the relationships between models for different tasks are controlled by a regularizer defined using the covariance matrix {$\Sigma$} \cite{argyriou2008convex,zhang2012convex}:
\begin{align}\label{multi-task}
\min_{W\in\R^{p\times d},\Sigma\in\R^{p\times p}}\frac{1}{p}\sum_{i=1}^p \frac{1}{m} \sum_{j=1}^{m}\ell(w_i,a_{ij}) + \lambda\mathrm{tr}(W^TP(\Sigma)W) \quad \mathrm{s.t.}\quad \Sigma\in\mathbb{S}_+^p,\quad \mathrm{tr}(\Sigma)=1,  
\end{align}
where $W = [w_1, \ldots, w_p]^T$ {denotes the model coefficients}, $\mathbb{S}^p_+$ denotes the positive semidefinite cone, $\ell(\cdot, \cdot)$ is the loss function, $w_i$ and $\{a_{ij}\}_{j=1}^{m}$ are respectively the model weight and the training set for the $i$th task, $1 \le i \le p$, $\lambda>0$ is a tuning parameter, $P : \mathbb{R}^{p \times p} \to \mathbb{R}^{p \times p}$ is a given map that controls the interaction between $W$ and {$\Sigma$}, and $\mathrm{tr}(\cdot)$ denotes the trace of a matrix. {The constraint $\mathrm{tr}(\Sigma)=1$ in \cref{multi-task} is imposed to control the complexity of $\Sigma$, as discussed in \cite{zhang2012convex}.}

\paragraph{Robust learning with chance constraints} Consider supervised learning with feature-label pairs $\{(a_i,b_i)\}_{i=1}^p$, where the $a_i$'s are assumed to be generated from a distribution $\mathcal{D}$ with expected value $\bar{a}$ and covariance matrix $\Sigma$. To mitigate the uncertainty in the $a_i$'s, a chance constraint $\mathbb{P}_{a\sim \mathcal{D}}(|w^T(a-\bar{a})|\ge\theta) \le \eta$ is proposed to be incorporated when performing robust linear regression \cite{shivaswamy2006second,xu2012optimization}, where $\theta$ and $\eta$ are the confidence level and desired probability, respectively, {and $w$ denotes the model coefficients to be optimized}. By approximating this chance constraint with a second-order cone constraint, \cite{shivaswamy2006second} propose the following robust regression problem with conic constraints:
\begin{equation}\label{rbst-socp}
\min_{w\in\R^d,\theta,v\ge 0}  \frac{1}{p}\sum_{i=1}^p\phi(w^Ta_i - b_i) + \lambda_1\theta + \lambda_2 v\quad\mathrm{s.t.}\quad (w,v)\in \mathbb{Q}^{d+1},\quad (\Sigma^{1/2}w,\sqrt{\eta}\theta)\in \mathbb{Q}^{d+1},
\end{equation}
where $\phi(\cdot)$ is the loss, $\lambda_1,\lambda_2>0$ are tuning parameters, and $\mathbb{Q}^{d+1}\doteq\{(u,t)\in\R^d\times\R_+:\|u\|\le t\}$ denotes the second-order cone.

Both examples in \cref{multi-task} and \cref{rbst-socp} involve nonlinear conic constraints and are typically coupled with large datasets in real applications. However, stochastic algorithms for addressing general conic optimization problems in ML remain largely underdeveloped. Existing algorithmic developments for conic constrained ML problems are limited to specific use cases: for instance, alternating minimization is widely employed in the literature \cite{argyriou2008convex,zhang2012convex} to solve \cref{multi-task}, while \cite{shivaswamy2006second} formulates \cref{rbst-socp} as {a} second-order cone {program}, assuming that $\phi$ is convex. Nevertheless, these developments do not unify the treatment of conic constraints and often lack convergence guarantees for problems with nonconvex objective functions.


In this paper, we aim for proposing a stochastic interior-point method (SIPM) framework for smooth conic optimization problems, including \cref{multi-task} and \cref{rbst-socp} as special cases. This class of problems takes the following form:
\begin{equation}\label{eq:cco}
\min_{x}f(x)\quad \mathrm{s.t.}\quad x\in\Omega\doteq\{x:Ax=b,x\in\gK\}, 
\end{equation}
where $f$ is continuously differentiable and possibly nonconvex on $\Omega$, but its derivatives are not accessible. Instead, we rely on stochastic estimators $G(\cdot;\xi)$ of $\nabla f(\cdot)$, where $\xi$ is a random variable with sample space $\Xi$ (see Assumptions \ref{asp:basic}(c) and \ref{asp:ave-smth} for assumptions on $G(\cdot;\xi)$). Here, $A\in\R^{m\times n}$ is of full row rank, $b\in\R^m$, and $\mathcal{K}\subseteq\R^n$ is a closed and pointed convex cone with a nonempty interior. Assume that \cref{eq:cco} has at least one optimal solution.

\subsection{Our contributions}

In this paper, we propose an SIPM framework (\cref{alg:unf-sipm}) for solving problem~\cref{eq:cco}, which, to the best of our knowledge, is the first stochastic algorithmic framework for this problem. Building on \cref{alg:unf-sipm}, we introduce four novel SIPM variants by incorporating different stochastic gradient estimators: mini-batch estimators, Polyak momentum, extrapolated Polyak momentum, and recursive momentum. Under mild assumptions, we establish the {iteration complexity} for these variants. For our SIPMs, we summarize the samples per iteration, {iteration complexity}, and smoothness assumptions in \cref{table:sum-ic}. In addition, numerical results demonstrate the practical advantages of our SIPMs over existing methods.

\begin{table}[h!]
\centering
\caption{\small Samples per iteration, {iteration complexity for finding an $\epsilon$-SSP}, and smoothness assumptions.}
\smallskip
\resizebox{\textwidth}{!}{
\begin{tabular}{c|c|c|c}
\hline
method & samples per iteration & {iteration complexity} & smoothness assumption \\
\hline
SIPM-ME & {$\widetilde{\mathcal{O}}(\epsilon^{-2})$} & {$\widetilde{\mathcal{O}}(\epsilon^{-2})$}  & locally smooth {$\nabla f$} (Asm. \ref{asp:basic}(b)) \\
SIPM-PM & $1$ & {$\widetilde{\mathcal{O}}(\epsilon^{-4})$}  &  locally smooth {$\nabla f$} (Asm. \ref{asp:basic}(b)) \\
SIPM-EM & $1$ & {$\widetilde{\mathcal{O}}(\epsilon^{-7/2})$} & locally smooth {$\nabla f$ \& $\nabla^2 f$} (Asm. \ref{asp:basic}(b) {\&} \ref{asp:2nd-smooth}(b)) \\
SIPM-RM & $1$ &  {$\widetilde{\mathcal{O}}(\epsilon^{-3})$}  & {locally} average smooth {$G$} (Asm. \ref{asp:ave-smth}) \\
\hline
\end{tabular}
}
\label{table:sum-ic}
\end{table}

Our main contributions are highlighted below.
\begin{itemize}
\item We propose an SIPM framework (\cref{alg:unf-sipm}) for solving problem \cref{eq:cco}. To the best of our knowledge, this is the first stochastic algorithmic framework for ML problems with general conic constraints. Building upon this framework, we introduce four novel SIPM variants that incorporate different stochastic gradient estimators.
\item We establish the {iteration complexity} of our SIPMs using mini-batch estimations, Polyak momentum, extrapolated Polyak momentum, {and recursive momentum under \textit{locally Lipschitz-type} conditions (Assumptions \ref{asp:basic}(b), \ref{asp:2nd-smooth}(b), and \ref{asp:ave-smth}).} All of { the iteration complexity results} established in this paper are entirely new and match, up to a {polylogarithmic} factor, the best-known {complexity bounds} in stochastic unconstrained optimization.

\item We conduct numerical experiments (\cref{sec:ne}) to compare our SIPMs with existing methods on robust linear regression, multi-task relationship learning, and clustering data streams. The results demonstrate that our SIPMs achieve solution quality comparable to or better than existing methods, and importantly, they consistently solve ML problems with general conic constraints, unlike existing task-specific approaches. 
\end{itemize}


\subsection{Related work}

\paragraph{Interior-point methods} In the deterministic regime, IPMs are recognized as a fundamental and widely used algorithmic approach for solving constrained optimization problems, having been extensively studied for decades \cite{alizadeh1995interior,byrd1999interior,kim2007interior,NN1994IPM,potra2000interior,vanderbei1999interior,wachter2006implementation,wright1997primal,forsgren2002interior}. In particular, primal-dual IPMs are a popular class of methods that iterate towards to an optimal solution by applying Newton's method to solve a system of equations derived from the perturbed first-order necessary conditions of the optimization problem \cite{wachter2006implementation,wright1997primal}. Another early and classical family of IPMs is the affine scaling method, which iteratively improves a feasible point within the relative interior of the feasible region by scaling the search direction to prevent the solution from exiting the boundary \cite{tseng1992convergence,dvurechensky2024hessian}. The SIPMs developed in this paper are more closely aligned with the algorithmic ideas of affine scaling methods. Moreover, IPM-based solvers are widely adopted for large-scale constrained optimization, including Ipopt \cite{wachter2006implementation}, Knitro \cite{byrd1999interior}, and LOQO \cite{vanderbei1999interior} for functional constrained problems, and SDPT3 \cite{toh1999sdpt3}, SeDuMi \cite{sturm1999using}, {and Mosek \cite{mosek}} for conic constrained problems.

Studies on SIPMs have only emerged recently. In particular, \cite{badenbroek2022complexity} and \cite{narayanan2016randomized} propose randomized IPMs for minimizing a linear function over a convex set. In addition, \cite{curtis2023stochastic} introduces an SIPM for bound-constrained optimization by augmenting the objective function with a log-barrier function, and \cite{curtis2024single} generalizes this approach to solve inequality constrained optimization problems. The optimization problems and algorithmic ideas studied in these previous works differ significantly from those in this paper.

\paragraph{Stochastic first-order methods} Recent significant developments in ML, fueled by large-scale data, have made stochastic first-order optimization methods widely popular for driving ML applications. In particular, many stochastic first-order methods have been developed for solving unconstrained and simple constrained problems of the form $\min_{x\in X} f(x)$, where $X \subseteq \mathbb{R}^n$ is a set for which the projection can be computed exactly \cite{cutkosky2020momentum,cutkosky2019momentum,fang2018spider,ghadimi2013stochastic,lan2020first,tran2022hybrid,wang2019spiderboost,xu2023momentum,li2021page}. Assuming that $f$ has a Lipschitz continuous gradient, {iteration complexity} of {$\mathcal{O}(\epsilon^{-4})$} has been established for the methods in \cite{ghadimi2013stochastic,cutkosky2020momentum}, in terms of minimizing the stationary measure $\mathrm{dist}(0, \nabla f(x^k) + \mathcal{N}_X(x^k))$, where $\mathcal{N}_X(x^k)$ denotes the normal cone of $X$ at $x^k$. The {iteration complexity can be improved to $\mathcal{O}(\epsilon^{-7/2})$} by using an implicit gradient transport technique, assuming that $f$ has a Lipschitz continuous Hessian \cite{cutkosky2020momentum}, and to {$\mathcal{O}(\epsilon^{-3})$} by using various variance reduction techniques, assuming that $\nabla f$ has a stochastic estimator satisfying the average smoothness condition \cite{cutkosky2019momentum, fang2018spider, tran2022hybrid, wang2019spiderboost, xu2023momentum,li2021page}. In addition, a number of recent efforts have been devoted to equality constrained optimization in the stochastic regime, including sequential quadratic programming methods \cite{berahas2023stochastic,berahas2021sequential,berahas2023accelerating,curtis2021inexact,fang2024fully,na2023adaptive,na2022asymptotic} and penalty methods \cite{alacaoglu2024complexity,li2024stochastic,lu2024variance,shi2022momentum}.

\section{Notation and preliminaries}\label{sec:not-pre}
Throughout this paper, let $\R^n$ denote the $n$-dimensional Euclidean space and $\langle\cdot,\cdot\rangle$ denote the standard inner product. We use $\|\cdot\|$ to denote the Euclidean norm of a vector or the spectral norm of a matrix. For the closed convex cone $\gK$, its interior and dual cone are denoted by $\rm{int}\gK$ and $\gK^*$, respectively. Define the {perturbed} barrier function of problem \cref{eq:cco} as:
\begin{align}\label{def:feas-r}
\phi_\mu(x)\doteq f(x) + {\mu (f(x)+B(x))}\qquad \forall x\in\Omega^\circ \doteq \{x \in \mathrm{int} \gK : Ax = b\}\ {\text{and}\  \mu\in(0,1]}.
\end{align}
For a finite set $\mathcal{B}$, let $|\mathcal{B}|$ denote its cardinality. For any $t\in\R$, let $[t]_+$ and $[t]_-$ denote its nonnegative and nonpositive parts, respectively (i.e., set to zero if $t$ is {negative} or {positive}, respectively). {Also, let $\lfloor t\rfloor$ denote the largest integer less than or equal to $t$.} We use the standard big-O notation $\gO(\cdot)$ to present {the complexity}, and $\widetilde{\gO}(\cdot)$ to represent the order with a {polylogarithmic} factor omitted. 

For the rest of this section, we review some background on logarithmically homogeneous self-concordant barriers and introduce the approximate optimality conditions.

\subsection{Logarithmically homogeneous self-concordant barrier}

Logarithmically homogeneous self-concordant (LHSC) barrier functions play a crucial role in the development of IPMs for conic optimization (see \cite{NN1994IPM}). in this paper, the design and analysis of SIPMs also heavily rely on the LHSC barrier function. Throughout this paper, assume that $\gK$ is equipped with a {\it $\vartheta$-LHSC} barrier function $B$, {where $\vartheta\ge1$}. Specifically, $B:\mathrm{int}\mathcal{K}\to\R$ satisfies the following conditions: (i) $B$ is convex and three times continuously differentiable in $\mathrm{int} \mathcal{K}$, and moreover, $|\varphi^{\prime\prime\prime}(0)|\le2(\varphi^{\prime\prime}(0))^{3/2}$ holds for all $x\in\mathrm{int} \mathcal{K}$ and $u\in\mathbb{R}^n$, where $\varphi(t)=B(x+tu)$; (ii) $B$ is a {\it barrier function} for $\mathcal{K}$, meaning that $B(x)$ goes to infinity as $x$ approaches the boundary of $\mathcal{K}$; (iii) $B$ satisfies the {\it logarithmically homogeneous property}: $B(tx) = B(x) -\vartheta \ln t$ for all $ x\in\mathrm{int}\mathcal{K},t>0$.

For any $x\in\mathrm{int}\gK$, the function $B$ induces the following local norms for vectors:
\begin{align}\label{def:local-norm}
\|v\|_x\doteq \left(v^T\nabla^2B(x)v\right)^{1/2},\quad \|v\|_x^*\doteq \left(v^T\nabla^2B(x)^{-1}v\right)^{1/2}\qquad \forall v\in\R^n.
\end{align}
The induced local norm for matrices is given by:
\begin{equation}\label{def:local-m-norm}
\|M\|_x^*\doteq \max_{\|v\|_x\le 1}\|Mv\|_x^*\qquad \forall M\in\R^{n\times n}.
\end{equation}

\subsection{Approximate optimality conditions}

Since $f$ is nonconvex, finding a global solution to \cref{eq:cco} is generally impossible. Instead, we aim to find a point that satisfies approximate optimality conditions, as is common in nonconvex optimization. For deterministic IPMs developed to solve \cref{eq:cco}, the following approximate optimality conditions are proposed in \cite{he2023newton,he2023newtonal}: 
\begin{equation}\label{def:1st-sc}
x\in\mathrm{int}\gK,\quad Ax=b,\quad \nabla f(x)+A^T{\tilde{\lambda}}\in\gK^*,\quad \|\nabla f(x)+A^T{\tilde{\lambda}}\|_x^*\le\epsilon,
\end{equation}
{where $\epsilon\in(0,1)$ denotes the tolerance}. 
In addition, stochastic algorithms typically produce solutions that satisfy approximate optimality conditions only in expectation. To facilitate our developments of SIPMs, we next derive an alternative approximate optimality condition for \cref{eq:cco} using $B$, which is a sufficient condition for \cref{def:1st-sc}. Its proof is deferred to \cref{subsec:proof-not-pre}.


\begin{lemma}\label{lem:apr-stat}
Let $\mu>0$ {be given}. Suppose that $(x,\lambda)\in\Omega^\circ\times\R^m$ satisfies $\|\nabla\phi_\mu(x) + A^T\lambda\|_x^* \le \mu$, where $\phi_\mu$ and $\Omega^\circ$ are given in \cref{def:feas-r}. Then, {$x$} also satisfies \cref{def:1st-sc} with {$\tilde{\lambda}=\lambda/(1+\mu)$ and any $\epsilon\ge(1+\sqrt{\vartheta})\mu$}.
\end{lemma}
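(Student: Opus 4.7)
The plan is to unpack the definition of $\phi_\mu$, rescale the given inequality by $1/(1+\mu)$ so that $A^T\lambda$ becomes $A^T\tilde{\lambda}$, and then isolate the part of $\nabla f(x)+A^T\tilde\lambda$ that is exactly the barrier gradient times a small coefficient. Concretely, since $\phi_\mu(x)=(1+\mu)f(x)+\mu B(x)$, the hypothesis reads
\[
\bigl\|(1+\mu)\nabla f(x)+\mu\nabla B(x)+A^T\lambda\bigr\|_x^*\le\mu,
\]
so dividing by $1+\mu$ and setting $\tilde{\lambda}=\lambda/(1+\mu)$ yields
\[
\Bigl\|\nabla f(x)+A^T\tilde{\lambda}+\tfrac{\mu}{1+\mu}\nabla B(x)\Bigr\|_x^*\le \tfrac{\mu}{1+\mu}.
\]
I will write this as $\nabla f(x)+A^T\tilde{\lambda}=\tfrac{\mu}{1+\mu}(-\nabla B(x))+g$ with $\|g\|_x^*\le\tfrac{\mu}{1+\mu}$, which is the one identity doing all the work.

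Next I would verify the cone-membership condition $\nabla f(x)+A^T\tilde{\lambda}\in\gK^*$. The tool here is the standard dual Dikin ellipsoid property of an LHSC barrier: for every $x\in\mathrm{int}\,\gK$, the set $\{-\nabla B(x)+v:\|v\|_x^*\le 1\}$ is contained in $\gK^*$ (this is the well-known dual of the primal Dikin ellipsoid $\{x+u:\|u\|_x\le1\}\subseteq\gK$ and can be found in Nesterov--Nemirovskii). Applying it to $v=(1+\mu)g/\mu$, which has $\|v\|_x^*\le 1$, gives $-\nabla B(x)+(1+\mu)g/\mu\in\gK^*$; multiplying by $\mu/(1+\mu)>0$ and using that $\gK^*$ is a cone yields $\nabla f(x)+A^T\tilde{\lambda}\in\gK^*$ as needed.

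Finally, for the norm bound in \cref{def:1st-sc}, I would apply the triangle inequality to the same decomposition together with the classical identity $\|\nabla B(x)\|_x^*=\sqrt{\vartheta}$ for a $\vartheta$-LHSC barrier:
\[
\bigl\|\nabla f(x)+A^T\tilde{\lambda}\bigr\|_x^*\le \tfrac{\mu}{1+\mu}\|\nabla B(x)\|_x^*+\|g\|_x^*\le \tfrac{\mu(1+\sqrt{\vartheta})}{1+\mu}\le(1+\sqrt{\vartheta})\mu,
\]
so any $\epsilon\ge(1+\sqrt\vartheta)\mu$ works. The remaining conditions $x\in\mathrm{int}\,\gK$ and $Ax=b$ are just the definition of $x\in\Omega^\circ$, so nothing else is needed.

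The proof is essentially a bookkeeping exercise; the only non-trivial ingredient is the dual Dikin ellipsoid inclusion $-\nabla B(x)+\{v:\|v\|_x^*\le1\}\subseteq\gK^*$, so the main ``obstacle'' is simply to cite (or briefly justify via self-concordance) this LHSC fact together with $\|\nabla B(x)\|_x^*=\sqrt{\vartheta}$. Once those are in place, the three lines above deliver both the inclusion and the norm bound.
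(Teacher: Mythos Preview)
Your proof is correct and follows essentially the same approach as the paper: both rely on the dual Dikin inclusion $\{s:\|s+\nabla B(x)\|_x^*\le1\}\subseteq\gK^*$ (Lemma~\ref{lem:tech-barrier}(ii)) for the cone membership and on $\|\nabla B(x)\|_x^*=\sqrt{\vartheta}$ (Lemma~\ref{lem:tech-barrier}(i)) together with the triangle inequality for the norm bound. The only cosmetic difference is that the paper divides the hypothesis by $\mu$ to land directly in the unit dual ball, whereas you first divide by $1+\mu$ and then rescale---algebraically equivalent rearrangements of the same three-line argument.
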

The above lemma offers an alternative approximate optimality condition for \cref{eq:cco}. We extend this condition to an expectation form and define an {approximate} stochastic stationary point for problem \cref{eq:cco}, which our SIPMs aim to achieve.

\begin{definition}\label{def:ssp}
Let {$\epsilon\in(0,1)$}. We say that $x\in\Omega^\circ$ is {an $\epsilon$}-stochastic stationary point {($\epsilon$-SSP)} of problem \cref{eq:cco} if it, together with some $\lambda\in\R^m$, satisfies $\E[\|\nabla\phi_\mu(x) + A^T\lambda\|_x^*] \le \mu$ {for some $\mu\le\epsilon/(1+\sqrt{\vartheta})$}.
\end{definition}

{One can also define an approximate stochastic stationary point for \cref{eq:cco} that satisfies \cref{def:1st-sc} with high probability, as follows.

\begin{definition}
Let $\epsilon,\delta\in(0,1)$. We say that $x\in\Omega^\circ$ is an $(\epsilon,\delta)$-stochastic stationary point ($(\epsilon,\delta)$-SSP) of problem \cref{eq:cco} if it, together with some $\lambda\in\R^m$, satisfies $\|\nabla\phi_\mu(x) + A^T\lambda\|_x^*\le \mu$ for some $\mu\le\epsilon/(1+\sqrt{\vartheta})$ with probability at least $1-\delta$.    
\end{definition}

Using Markov's inequality, we obtain that
\begin{align*}
\mathbb{P}(\|\nabla\phi_\mu(x) + A^T\lambda\|_x^*> \mu) \le \frac{\E[\|\nabla\phi_\mu(x) + A^T\lambda\|_x^*]}{\mu}\qquad\forall \mu>0.
\end{align*}
Therefore, if $x\in\Omega^\circ$, together with $\lambda\in\R^m$, satisfies $\E[\|\nabla\phi_\mu(x) + A^T\lambda\|_x^*] \le \delta\mu$ for some $\mu\le\epsilon/(1+\sqrt{\vartheta})$ and $\epsilon,\delta\in(0,1)$, one can derive that $x$ is an $(\epsilon,\delta)$-SSP of problem \cref{eq:cco}, which leads to the following lemma.

\begin{lemma}\label{lem:prob-exp}
Let $\epsilon,\delta\in(0,1)$ be given. Suppose that $(x,\lambda)\in\Omega^\circ\times\R^m$ satisfies $\E[\|\nabla\phi_\mu(x) + A^T\lambda\|_x^*] \le \delta\mu$ for some $\mu\le\epsilon/(1+\sqrt{\vartheta})$, where $\phi_\mu$ and $\Omega^\circ$ are given in \cref{def:feas-r}. Then, $x$ is an $(\epsilon,\delta)$-SSP of problem \cref{eq:cco}.    
\end{lemma}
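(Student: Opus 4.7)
The plan is essentially to formalize the Markov inequality argument that the excerpt already sketches immediately before the lemma statement. The quantity $Y \doteq \|\nabla\phi_\mu(x) + A^T\lambda\|_x^*$ is a nonnegative random variable (the randomness enters through $x$ and $\lambda$, which in the algorithmic context will be functions of stochastic gradient estimators), so Markov's inequality gives
\begin{equation*}
\mathbb{P}(Y > \mu) \;\le\; \frac{\E[Y]}{\mu}.
\end{equation*}
Plugging in the hypothesis $\E[Y] \le \delta \mu$ yields $\mathbb{P}(Y > \mu) \le \delta$, so equivalently $Y \le \mu$ holds with probability at least $1-\delta$.

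With this high-probability bound in hand, the rest is just checking \Cref{def:ssp}'s probabilistic counterpart. Since $\mu \le \epsilon/(1+\sqrt{\vartheta})$ is given, we have exhibited $\lambda \in \R^m$ (the one supplied by the hypothesis) and a scale $\mu$ satisfying $\mu \le \epsilon/(1+\sqrt{\vartheta})$ such that $\|\nabla\phi_\mu(x) + A^T\lambda\|_x^* \le \mu$ holds with probability at least $1-\delta$. Feasibility $x \in \Omega^\circ$ is part of the hypothesis itself, so the three requirements of an $(\epsilon,\delta)$-SSP are met.

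There is no genuine obstacle here: the lemma is a one-line consequence of Markov's inequality applied to the local dual norm of the perturbed KKT residual, and its role in the paper is essentially bookkeeping: it lets the authors translate any in-expectation guarantee on $\E[\|\nabla\phi_\mu(x) + A^T\lambda\|_x^*]$ that they will derive for their SIPM variants into the high-probability stationarity notion of $(\epsilon,\delta)$-SSP, at the cost of tightening the target from $\mu$ to $\delta \mu$. The only thing worth being careful about is making explicit that $Y \ge 0$ (so that Markov applies) and that the $\lambda$ appearing in the $(\epsilon,\delta)$-SSP definition is precisely the one from the hypothesis, rather than one that would depend on the realization.
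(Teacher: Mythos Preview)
Your proposal is correct and follows exactly the same route as the paper: apply Markov's inequality to the nonnegative random variable $\|\nabla\phi_\mu(x)+A^T\lambda\|_x^*$, plug in the hypothesis $\E[\cdot]\le\delta\mu$ to get $\mathbb{P}(\cdot>\mu)\le\delta$, and then invoke the definition of an $(\epsilon,\delta)$-SSP together with the assumed bound $\mu\le\epsilon/(1+\sqrt{\vartheta})$. The paper in fact presents this very argument in the paragraph immediately preceding the lemma and states the lemma without further proof.
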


For the remainder of this paper, we focus on establishing the iteration complexity of our proposed SIPMs for finding an $\epsilon$-SSP of problem \cref{eq:cco}. While we do not discuss it in detail, by applying Lemma \ref{lem:prob-exp}, one can also extend our analysis to establish the iteration complexity of our SIPMs for finding an $(\epsilon,\delta)$-SSP of problem \cref{eq:cco}.

}


\section{Stochastic interior-point methods}\label{sec:sasa}

In this section, we propose an SIPM framework for solving \cref{eq:cco} and then analyze the {iteration complexity} of four SIPM variants. We now make the following additional assumptions that will be used throughout this section.

\begin{assumption}\label{asp:basic}
\begin{enumerate}[{\rm (a)}]
\item The Slater's condition holds, that is, $\Omega^\circ$, defined in \cref{def:feas-r}, is nonempty. In addition, there exists a finite $\phi_{\mathrm{low}}$ such that 
\begin{align}\label{def:lwbd-phi}
\inf_{x\in\Omega^\circ,\mu\in(0,1]}\{f(x)+\mu B(x)\}\ge \phi_{\mathrm{low}}.  
\end{align}
\item {For any $s_\eta\in(0,1)$,} there exists {an} $L_1>0$ such that
\begin{align}\label{ineq:1st-Lip}
\|\nabla f(y) - \nabla f(x)\|_x^* \le L_1\|y-x\|_x\qquad \forall x,y\in\Omega^\circ\text{ with } \|y-x\|_x\le s_\eta.
\end{align}
\item {We have access to a} stochastic gradient estimator $G:\Omega^\circ\times\Xi\to\R^n$ {that} satisfies
\begin{align}\label{asp:unbias-boundvar}
\E_\xi[G(x;\xi)] = \nabla f(x),\quad \E_\xi[(\|G(x;\xi)-\nabla f(x)\|_x^*)^2]\le\sigma^2\qquad \forall x\in\Omega^\circ
\end{align} 
for some $\sigma>0$.
\end{enumerate}
\end{assumption}

\begin{remark}\label{rmk:basic-asm}
\begin{enumerate}[{\rm (i)}]
\item {Assumption \ref{asp:basic}(a) is reasonable. Particularly, the assumption in \cref{def:lwbd-phi} means that the barrier function $f(x)+\mu B(x)$ is uniformly bounded below whenever the barrier parameter $\mu$ is no larger than $1$. It usually holds for problems where the barrier method converges, and similar assumptions are also used in \cite{he2023newton,he2023newtonal}. Otherwise, in case that \cref{def:lwbd-phi} fails, one can instead solve a perturbed counterpart of \cref{eq:cco}:
\begin{align}\label{perturbed-cco}
\min_{x} f(x) + \tau \|x\|^2\quad\mathrm{s.t.}\quad Ax=b,x\in\gK
\end{align}
for a sufficiently small $\tau>0$. It can be verified that \cref{def:lwbd-phi} holds for the perturbed problem \cref{perturbed-cco} with $f(\cdot)$ replaced by $f(\cdot)+\tau\|\cdot\|^2$. Indeed, let $f^*$ be the optimal value of \cref{eq:cco}. Then, \begin{align*}
&\inf_{x\in\Omega^\circ,\mu\in(0,1]}\{f(x) + \tau\|x\|^2 + \mu B(x)\}\\
&\ge f^* + \inf_{\mu\in(0,1]}\Big\{\mu\inf_{x\in\Omega^\circ}\{(\tau/\mu)\|x\|^2 + B(x)\}\Big\}\ge f^* + \inf_{\mu\in(0,1]}\Big\{\mu\inf_{x\in\Omega^\circ}\{\tau\|x\|^2 + B(x)\}\Big\}\\     
&\ge f^* - \Big|\inf_{x\in\Omega^\circ}\{\tau\|x\|^2 + B(x)\}\Big| > -\infty,
\end{align*}
where the last inequality is due to the strong convexity of $\tau\|x\|^2 + B(x)$. Hence, the assumption in \cref{def:lwbd-phi} holds for the perturbed problem \cref{perturbed-cco}.}

\item {As a consequence of Assumption \ref{asp:basic}(a), one can see that the perturbed barrier function defined in \cref{def:feas-r} is bounded below:
\begin{align}\label{to-lwbd-phimu}
\phi_\mu(x) = (1+\mu) \Big(f(x) + \frac{\mu}{1+\mu}B(x)\Big) \ge (1+\mu)\phi_{\mathrm{low}}\qquad\forall \mu\in(0,1],x\in\Omega^\circ.   
\end{align}
In addition, for notational convenience, we define
\begin{align}\label{lwbd-fb}
\Delta(x)\doteq f(x)  + [f(x) + B(x)]_+ - 2[\phi_{\mathrm{low}}]_-\qquad \forall x\in\Omega^\circ.
\end{align} 
}

\item \cref{asp:basic}(b) implies that $\nabla f$ is locally Lipschitz continuous on $\Omega^\circ$ with respect to local norms. {Similar local smoothness assumptions are also used for other barrier methods \cite{he2023newton,dvurechensky2024hessian,he2023newtonal}. As will be shown in Lemma \ref{lem:tech-barrier}(iv), $\nabla B$ is locally Lipschitz continuous on $\Omega^\circ$ with respect to local norms, even if it is not well defined on the boundary of $\Omega$. In general, we recall from \cite{he2023newton} that for any bounded $x$, $\nabla^2B(x)^{-1}$ is bounded with respect to the spectral norm. Then, one can show that when $\Omega$ is bounded, the locally Lipschitz condition \cref{ineq:1st-Lip} can be implied by the globally Lipschitz condition (see \cite[Section 5]{he2023newton} for detailed discussions).} For convenience, we define
\begin{align}\label{def:Lphi-1}
{ L_\phi\doteq 2L_1 + 1/(1-s_\eta),}
\end{align}
which denotes the Lipschitz constant of $\nabla\phi_\mu$ for any $\mu\in(0,1]$ (see \cref{lem:Lip-phimu} below).

\item \cref{asp:basic}(c) implies that $G(\cdot;\xi)$ is an unbiased estimator of $\nabla f(\cdot)$ and that its variance, with respect to the local norm, is bounded above. As noted in \cref{rmk:basic-asm}(iii), $\nabla^2 B(x)^{-1}$ is bounded {with respect to the spectral norm for any bounded $x$. Using} this and \cref{def:local-norm}, we have that {when $\Omega$ is bounded,} the second relation in \cref{asp:unbias-boundvar} holds if the variance of $G(\cdot;\xi)$ with respect to the Euclidean norm is bounded.
\end{enumerate}    
\end{remark}

In what follows, we propose an SIPM framework in \cref{alg:unf-sipm} for solving problem~\cref{eq:cco}. Subsequently, we will employ four distinct stochastic estimators to construct $\{\overline{m}^k\}_{k\ge0}$, with specific schemes provided in \cref{sipm-over-mk}, \cref{sipm-pm-over-mk}, \cref{sipm-em-over-mk}, and \cref{sipm-rm-mk-up}, respectively. 

We remark that computations involving $\nabla^2 B(x^k)^{-1}$ can be performed efficiently for common nonlinear cones $\gK$. For example, when $\gK$ is the second-order cone, $\nabla B(x^k)^{-1}$ can be computed analytically (e.g., see \cite{alizadeh2003second}), and $(A\nabla^2 B(x^k)^{-1}A^T)^{-1}v$ for any $v \in \R^m$ can be efficiently evaluated using Cholesky factorization. When $\gK$ is the semidefinite cone, we have that $\nabla^2 B(X^k)^{-1}[V]=X^kVX^k$ holds for any $n\times n$ symmetric matrix $V$, and that $A\nabla^2 B(X^k)^{-1}A^T$ can be efficiently computed by exploiting the sparsity of $A$ (see \cite{toh1999sdpt3} for details).

\begin{algorithm}[!htbp]
\caption{An SIPM framework}
\label{alg:unf-sipm}
\begin{algorithmic}[0]
\State \textbf{Input:} starting point $x^0\in\Omega^\circ$, {nonincreasing step sizes $\{\eta_k\}_{k\ge0}\subset(0,s_\eta]$ with $s_\eta\in(0,1)$}, nonincreasing barrier parameters $\{\mu_k\}_{k\ge0}\subset(0,1]$.
\For{$k=0,1,2,\ldots$}
\State Construct an estimator $\overline{m}^k$ for $\nabla f(x^k)$, and set $m^k = \overline{m}^k + \mu_k {(\overline{m}^k+\nabla B(x^k))}$.
\State Update dual and primal variables as follows:
\begin{align}\label{step-dp-update}
\lambda^k = -(A H_k A^T)^{-1}A H_k m^k,\quad x^{k+1} = x^k - \eta_k\frac{H_k(m^k + A^T\lambda^k)}{\|m^k + A^T\lambda^k\|_{x^k}^*}, 
\end{align}
\State where $H_k=\nabla^2 B(x^k)^{-1}$.
\EndFor
\end{algorithmic}
\end{algorithm}

Our next lemma shows that all iterates $\{x^k\}_{k\ge0}$ generated by \cref{alg:unf-sipm} are strictly feasible, i.e., $x^k\in\Omega^\circ$ holds for all $k\ge0$. Its proof is deferred to \cref{subsec:proof-not-pre}.

\begin{lemma}\label{lem:int-frame}
Suppose that \cref{asp:basic} holds. Let $\{x^k\}_{k\ge0}$ be generated by \cref{alg:unf-sipm}. Then, $\|x^{k+1} - x^k\|_{x^k}= \eta_k$ and $x^k\in\Omega^\circ$ for all $k\ge0$, where $\Omega^\circ$ is defined in \cref{def:feas-r}.
\end{lemma}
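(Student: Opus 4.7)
The plan is to proceed by induction on $k$, with base case $x^0\in\Omega^\circ$ guaranteed by input. For the inductive step, assuming $x^k\in\Omega^\circ$, I split the claim into three sub-goals: (a) the local step length $\|x^{k+1}-x^k\|_{x^k}$ equals $\eta_k$; (b) linear feasibility $Ax^{k+1}=b$ is preserved; and (c) $x^{k+1}\in\mathrm{int}\,\gK$, so that combined with (b) we get $x^{k+1}\in\Omega^\circ$.

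For (a), the computation is purely algebraic. Writing $d^k = H_k(m^k+A^T\lambda^k)/\|m^k+A^T\lambda^k\|_{x^k}^*$, I expand $\|d^k\|_{x^k}^2 = (d^k)^T\nabla^2 B(x^k)d^k$ and use $H_k^T\nabla^2 B(x^k)H_k = H_k$ (since $H_k=\nabla^2 B(x^k)^{-1}$ is symmetric positive definite) to reduce the numerator to $(m^k+A^T\lambda^k)^T H_k (m^k+A^T\lambda^k) = (\|m^k+A^T\lambda^k\|_{x^k}^*)^2$ by the definition of the dual local norm in \cref{def:local-norm}, which exactly cancels the denominator. Thus $\|d^k\|_{x^k}=1$ and $\|x^{k+1}-x^k\|_{x^k}=\eta_k$.

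For (b), I multiply the update rule \cref{step-dp-update} by $A$. It suffices to verify $AH_k(m^k+A^T\lambda^k)=0$. Plugging in the closed form $\lambda^k=-(AH_kA^T)^{-1}AH_km^k$ yields
\begin{equation*}
AH_k(m^k+A^T\lambda^k) = AH_km^k - AH_kA^T(AH_kA^T)^{-1}AH_km^k = 0,
\end{equation*}
where $AH_kA^T$ is invertible because $A$ has full row rank and $H_k\succ 0$. Hence $Ax^{k+1}=Ax^k=b$ by the inductive hypothesis.

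For (c), I invoke the Dikin-ellipsoid property of LHSC barriers: for any $x\in\mathrm{int}\,\gK$, the open unit ball $\{y:\|y-x\|_x<1\}$ is contained in $\mathrm{int}\,\gK$ (a standard consequence of self-concordance, which I expect is collected in the ``Lemma \ref{lem:tech-barrier}'' alluded to in \cref{rmk:basic-asm}(iii)). Since step (a) gives $\|x^{k+1}-x^k\|_{x^k}=\eta_k\le s_\eta<1$ by hypothesis, this property immediately yields $x^{k+1}\in\mathrm{int}\,\gK$, completing the induction. The only non-routine ingredient is the Dikin-ellipsoid fact, but this is classical for $\vartheta$-LHSC barriers and is surely available as a preliminary lemma in the paper, so I do not anticipate any real obstacle.
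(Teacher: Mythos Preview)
Your proposal is correct and follows essentially the same approach as the paper's proof: compute the step length via the dual-local-norm identity, verify $Ax^{k+1}=b$ by plugging in the closed form of $\lambda^k$, and invoke the Dikin-ellipsoid inclusion (which is exactly \cref{lem:tech-barrier}(ii)) to keep $x^{k+1}\in\mathrm{int}\,\gK$. The only cosmetic difference is that the paper establishes $\|x^{k+1}-x^k\|_{x^k}=\eta_k$ once up front (implicitly assuming $x^k\in\mathrm{int}\,\gK$) before the induction on membership in $\Omega^\circ$, whereas you fold all three sub-goals into the inductive step; your organization is arguably cleaner but the content is identical.
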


In the remainder of this section, we propose four variants of SIPMs in \cref{subsec:ipm-mb,subsec:sipm-pm,subsec:sipm-em,subsec:sipm-rm} and study their {iteration complexity}. The developments and guarantees of these four SIPM variants are independent of each other.




\subsection{An SIPM with mini-batch estimators}\label{subsec:ipm-mb}
We now describe a variant of \cref{alg:unf-sipm}, where $\{\overline{m}^k\}_{k\ge0}$ is constructed using mini-batch estimators:
\begin{align}\label{sipm-over-mk}
\overline{m}^k = \sum\nolimits_{i\in \mathscr{B}_k} G(x^k;\xi_i^k)/|\mathscr{B}_k|\qquad \forall k\ge0,
\end{align}
where $G(\cdot,\cdot)$ satisfies \cref{asp:basic}(c), and the sequence $\{\mathscr{B}_k\}_{k\ge0}$ denotes the sets of sample indices. We refer this variant as SIPM with mini-batch estimators (SIPM-ME).

The following lemma establishes an upper bound for the estimation error of the mini-batch estimators defined in \cref{sipm-over-mk}, and its proof is deferred to \cref{subsec:proof-me}.

\begin{lemma}\label{lem:est-error-me}
Suppose that \cref{asp:basic} holds. Let $\{x^k\}_{k\ge0}$ be generated by \cref{alg:unf-sipm} with $\{\overline{m}^k\}_{k\ge0}$ constructed as in \cref{sipm-over-mk} and input parameters $\{(\eta_k,\mathscr{B}_k)\}_{k\ge0}$. Then,
\begin{align}\label{bd:variance}
\E_{\{\xi_i^k\}_{i\in \mathscr{B}_k}}[(\|\overline{m}^k - \nabla f(x^k)\|_{x^k}^*)^2] \le \sigma^2/|\mathscr{B}_k|\qquad \forall k\ge0,
\end{align}   
where $\sigma$ is given in \cref{asp:basic}(c).
\end{lemma}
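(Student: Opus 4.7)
The plan is to exploit the linearity of the mini-batch estimator and the bilinearity of the inner product inducing the dual local norm. First I would write the deviation as
\begin{align*}
\overline{m}^k - \nabla f(x^k) \;=\; \frac{1}{|\mathscr{B}_k|}\sum_{i\in\mathscr{B}_k}\bigl(G(x^k;\xi_i^k)-\nabla f(x^k)\bigr),
\end{align*}
and expand the squared dual local norm using the definition $(\|v\|_{x^k}^*)^2 = v^T\nabla^2 B(x^k)^{-1}v$ from \cref{def:local-norm}. This turns $\E[(\|\overline{m}^k - \nabla f(x^k)\|_{x^k}^*)^2]$ into a double sum, over index pairs $(i,j)\in\mathscr{B}_k\times\mathscr{B}_k$, of terms of the form $(G(x^k;\xi_i^k)-\nabla f(x^k))^T \nabla^2 B(x^k)^{-1}(G(x^k;\xi_j^k)-\nabla f(x^k))$, scaled by $1/|\mathscr{B}_k|^2$.

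Next I would condition on $x^k$. Since $x^k$ is determined by samples drawn strictly before iteration $k$, it is independent of the fresh samples $\{\xi_i^k\}_{i\in\mathscr{B}_k}$, and conditionally on $x^k$ the matrix $\nabla^2 B(x^k)^{-1}$ is deterministic. Assuming (as is standard for mini-batch SGD-type analyses) that the samples in $\mathscr{B}_k$ are drawn independently, the unbiasedness half of \cref{asp:unbias-boundvar} then forces all cross terms ($i\neq j$) to vanish in conditional expectation. Each of the $|\mathscr{B}_k|$ diagonal terms equals $(\|G(x^k;\xi_i^k)-\nabla f(x^k)\|_{x^k}^*)^2$, which by the variance bound in \cref{asp:unbias-boundvar} has conditional expectation at most $\sigma^2$.

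Summing the diagonal contributions and dividing by $|\mathscr{B}_k|^2$ gives $\sigma^2/|\mathscr{B}_k|$, and taking an outer expectation over $x^k$ preserves the bound, yielding \cref{bd:variance}. There is no real obstacle here: this is the classical variance-of-a-sample-mean calculation, and the only point that demands any care is conditioning on $x^k$ so that the random local norm can be treated as a deterministic quadratic form while the variance computation is carried out.
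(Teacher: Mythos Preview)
Your proposal is correct and essentially identical to the paper's proof: both write the deviation as a sample average, use the unbiasedness in \cref{asp:unbias-boundvar} to eliminate cross terms, and bound each diagonal term by $\sigma^2$. One minor point: the lemma as stated already bounds the conditional expectation $\E_{\{\xi_i^k\}_{i\in\mathscr{B}_k}}[\cdot]$, so your final step of taking an outer expectation over $x^k$ is unnecessary (though harmless).
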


We next provide an upper bound for the average expected error of the stationary condition across all iterates generated by SIPM-ME. Its proof is relegated to \cref{subsec:proof-me}.

\begin{theorem}\label{thm:stat-ave-me}
Suppose that \cref{asp:basic} holds. Let $\{(x^k,\lambda^k)\}_{k\ge0}$ be the sequence generated by \cref{alg:unf-sipm} with $\{\overline{m}^k\}_{k\ge0}$ constructed as in \cref{sipm-over-mk} and input parameters $\{(\eta_k,\mathscr{B}_k,\mu_k)\}_{k\ge0}$. Assume that $\{\eta_k\}_{k\ge0}$ is nonincreasing. Then, for any $K\ge1$,
\begin{align}
\sum_{k=0}^{K-1} \E[\|\nabla \phi_{\mu_k}(x^k) + A^T{\lambda}^k\|_{x^k}^*]\le \frac{\Delta(x^0)}{\eta_{K-1}} +\frac{1}{\eta_{K-1}}\sum_{k=0}^{K-1}\eta_k\left(\frac{{4}\sigma}{|\mathscr{B}_k|^{1/2}}+\frac{L_\phi}{2}\eta_k\right),\label{upbd:me-exp-xk}
\end{align}
where $\Delta(\cdot)$, $L_\phi$, and $\sigma$ are given in \cref{lwbd-fb}, \cref{def:Lphi-1}, and \cref{asp:basic}(c), respectively.
\end{theorem}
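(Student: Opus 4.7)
The overall plan is to derive a one-step descent inequality for the perturbed barrier function $\phi_{\mu_k}$, convert it into a bound on the stationarity measure $\|\nabla\phi_{\mu_k}(x^k) + A^T\lambda^k\|_{x^k}^*$, and then sum over iterations via a careful Abel-type rearrangement to obtain exactly the constant $\Delta(x^0)$.

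I would begin by invoking the local-norm Lipschitz continuity of $\nabla\phi_{\mu_k}$ with constant $L_\phi$ from \cref{def:Lphi-1} (as stated in \cref{rmk:basic-asm}(iii)) together with the step-size identity $\|x^{k+1}-x^k\|_{x^k} = \eta_k$ from \cref{lem:int-frame} to obtain
\[
\phi_{\mu_k}(x^{k+1}) \le \phi_{\mu_k}(x^k) + \eta_k\langle \nabla\phi_{\mu_k}(x^k), d^k\rangle + \tfrac{L_\phi}{2}\eta_k^2,
\]
where $d^k \doteq -H_k(m^k+A^T\lambda^k)/\|m^k+A^T\lambda^k\|_{x^k}^*$. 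Writing $g^k \doteq \nabla\phi_{\mu_k}(x^k) = (m^k+A^T\lambda^k) + (g^k-m^k) - A^T\lambda^k$, the $A^T\lambda^k$ piece drops out of $\langle g^k, H_k(m^k+A^T\lambda^k)\rangle$ because $\lambda^k$ is chosen so that $AH_k(m^k+A^T\lambda^k)=0$. Applying Cauchy--Schwarz with respect to the $H_k$-inner product together with the reverse triangle inequality $\|m^k+A^T\lambda^k\|_{x^k}^* \ge \|g^k+A^T\lambda^k\|_{x^k}^* - \|g^k-m^k\|_{x^k}^*$ then yields
\[
\langle g^k, d^k\rangle \le -\|g^k+A^T\lambda^k\|_{x^k}^* + 2\|g^k-m^k\|_{x^k}^*.
\]
Since $m^k - g^k = (1+\mu_k)(\overline{m}^k - \nabla f(x^k))$ with $\mu_k \le 1$, Jensen's inequality and \cref{lem:est-error-me} give $\E[\|g^k-m^k\|_{x^k}^* \mid \mathcal{F}_k] \le 2\sigma/|\mathscr{B}_k|^{1/2}$, where $\mathcal{F}_k$ denotes the natural filtration. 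Substituting into the descent bound and taking total expectation yields
\[
\eta_k\,\E[\|g^k+A^T\lambda^k\|_{x^k}^*] \le \E[\phi_{\mu_k}(x^k)-\phi_{\mu_k}(x^{k+1})] + 4\eta_k\sigma/|\mathscr{B}_k|^{1/2} + \tfrac{L_\phi}{2}\eta_k^2.
\]

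The main obstacle is handling the non-telescoping sum $\sum_{k=0}^{K-1}\E[\phi_{\mu_k}(x^k)-\phi_{\mu_k}(x^{k+1})]$ and extracting exactly the constant $-2[\phi_{\mathrm{low}}]_-$ that appears in $\Delta(x^0)$. I would rearrange via
\[
\sum_{k=0}^{K-1}[\phi_{\mu_k}(x^k)-\phi_{\mu_k}(x^{k+1})] = \phi_{\mu_0}(x^0) + \sum_{k=1}^{K-1}(\mu_k-\mu_{k-1})(f(x^k)+B(x^k)) - \phi_{\mu_{K-1}}(x^K),
\]
and the key is to bound the last two terms \emph{jointly} rather than separately (handling them independently is off by a factor and produces $-3[\phi_{\mathrm{low}}]_-$). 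Using \cref{to-lwbd-phimu} one has $-\phi_{\mu_{K-1}}(x^K) \le -(1+\mu_{K-1})\phi_{\mathrm{low}}$; combining $f(x^k)+B(x^k) \ge \phi_{\mathrm{low}}$ (the $\mu=1$ case of \cref{asp:basic}(a)) with $\mu_k-\mu_{k-1}\le 0$ yields $\sum_{k=1}^{K-1}(\mu_k-\mu_{k-1})(f(x^k)+B(x^k)) \le (\mu_{K-1}-\mu_0)\phi_{\mathrm{low}}$. Adding these two bounds telescopes cleanly to $-(1+\mu_0)\phi_{\mathrm{low}}$, which, by a sign-of-$\phi_{\mathrm{low}}$ case analysis combined with $\mu_0\le 1$, is at most $-2[\phi_{\mathrm{low}}]_-$. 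Separately, $\mu_0\le 1$ gives $\mu_0(f(x^0)+B(x^0)) \le [f(x^0)+B(x^0)]_+$, so $\phi_{\mu_0}(x^0) \le f(x^0) + [f(x^0)+B(x^0)]_+$. Combining these pieces bounds the sum by $\Delta(x^0)$. The theorem then follows by dividing through by $\eta_{K-1}$ and using the nonincreasing assumption $\eta_k \ge \eta_{K-1}$, which gives $\sum_k\E[\|g^k+A^T\lambda^k\|_{x^k}^*] \le \eta_{K-1}^{-1}\sum_k\eta_k\E[\|g^k+A^T\lambda^k\|_{x^k}^*]$.
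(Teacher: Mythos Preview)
Your proposal is correct and follows essentially the same approach as the paper: you re-derive the one-step descent inequality that the paper packages as \cref{lem:general-tech}, bound the estimation error via Jensen and \cref{lem:est-error-me}, and perform the same Abel-type rearrangement using $f(x^k)+B(x^k)\ge\phi_{\mathrm{low}}$ together with \cref{to-lwbd-phimu} to extract $\Delta(x^0)$. The only cosmetic difference is your index convention in the telescoping (you use $\phi_{\mu_{K-1}}(x^K)$ and sum $(\mu_k-\mu_{k-1})$ over $k=1,\dots,K-1$, whereas the paper uses $\phi_{\mu_K}(x^K)$ and sums $(\mu_{k+1}-\mu_k)$ over $k=0,\dots,K-1$); both collapse to $-(1+\mu_0)\phi_{\mathrm{low}}$ and yield the same bound.
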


\subsubsection{Hyperparameters and {iteration complexity}}

{In this subsection, we establish the iteration complexity of SIPM-ME with its input parameters specified $\{(\eta_k,\mathscr{B}_k,\mu_k)\}_{k\ge0}$ as:}
\begin{align}
\eta_k = \frac{s_\eta}{(k+1)^{1/2}},\quad |\mathscr{B}_k| = k+1,\quad \mu_k{=\max\Big\{\frac{1}{(k+1)^{1/2}},\frac{\epsilon}{1+\sqrt{\vartheta}}\Big\}} \qquad \forall k\ge0,\label{etak-Bk-me}
\end{align}
where $s_\eta\in(0,1)$ is a user-defined maximum length for step sizes in \cref{alg:unf-sipm}, {and $\epsilon\in(0,1)$ denotes the tolerance.} It follows that $\{\eta_k\}_{k\ge0}\subset(0,s_\eta]$ and $\{\mu_k\}_{k\ge0}\subset(0,1]$, with both sequences nonincreasing.


The following theorem presents the {iteration complexity} of SIPM-ME with inputs specified in \cref{etak-Bk-me}. Its proof is deferred to \cref{subsec:proof-me}.

\begin{theorem}\label{cor:order-me}
Suppose that \cref{asp:basic} holds. Consider \cref{alg:unf-sipm} with $\{\overline{m}^k\}_{k\ge0}$ constructed as in \cref{sipm-over-mk} and $\{(\eta_k,\mathscr{B}_k,\mu_k)\}_{k\ge0}$ specified as in \cref{etak-Bk-me}. Let {$\kappa(K)$} be uniformly drawn from {$\{\lfloor K/2\rfloor,\ldots,K-1\}$}, and define
\begin{align}\label{def:Kme}
{M_{\mathrm{me}}\doteq 2\Big(\frac{\Delta(x^0)}{s_\eta} + 8\sigma+ s_\eta L_\phi\Big),}
\end{align}
where $\Delta(\cdot)$ is defined in \cref{lwbd-fb}, $L_\phi$ and $\sigma$ are given in \cref{def:Lphi-1} and \cref{asp:basic}(c), respectively, and $s_\eta$ is an input of \cref{alg:unf-sipm}. Then, {
\begin{align}
&\E[\|\nabla \phi_{\mu_{\kappa(K)}}(x^{\kappa(K)}) + A^T\lambda^{\kappa(K)}\|_{x^{\kappa(K)}}^*] \le \mu_{\kappa(K)}\ \text{ with }\ \mu_{\kappa(K)}\le\epsilon/(1+\sqrt{\vartheta})\nonumber\\
&\forall K\ge \max\Big\{2\Big(\frac{1+\sqrt{\vartheta}}{\epsilon}\Big)^2,\Big(\frac{4M_{\mathrm{me}}(1+\sqrt{\vartheta})}{\epsilon}\ln\Big(\frac{4M_{\mathrm{me}}(1+\sqrt{\vartheta})}{\epsilon}\Big)\Big)^2,3\Big\}.  \label{complexity-me}
\end{align}
}

\vspace{-1em}
\end{theorem}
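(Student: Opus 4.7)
The plan is to specialize Theorem \ref{thm:stat-ave-me} to the hyperparameters in \cref{etak-Bk-me} and then transfer the sum bound into a per-index expectation bound via the uniform sampling of $\kappa(K)$. First I would substitute $\eta_k = s_\eta/(k+1)^{1/2}$ and $|\mathscr{B}_k| = k+1$ into the right-hand side of \cref{upbd:me-exp-xk}, so that $1/\eta_{K-1} = K^{1/2}/s_\eta$, $\eta_k/|\mathscr{B}_k|^{1/2} = s_\eta/(k+1)$, and $\eta_k^2 = s_\eta^2/(k+1)$. Pulling out $K^{1/2}/s_\eta$ and invoking the harmonic bound $\sum_{k=0}^{K-1}\frac{1}{k+1} \le 1+\ln K$ yields an estimate of the form
\begin{align*}
\sum_{k=0}^{K-1}\E[\|\nabla\phi_{\mu_k}(x^k)+A^T\lambda^k\|_{x^k}^*] \le K^{1/2}(1+\ln K)\Big(\frac{\Delta(x^0)}{s_\eta}+4\sigma+\frac{s_\eta L_\phi}{2}\Big),
\end{align*}
and the bracketed quantity is bounded by $M_{\mathrm{me}}/2$ via \cref{def:Kme}.

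Next, since $\kappa(K)$ is uniform over $\{\lfloor K/2\rfloor,\ldots,K-1\}$, the tower property gives
\begin{align*}
\E[\|\nabla\phi_{\mu_{\kappa(K)}}(x^{\kappa(K)})+A^T\lambda^{\kappa(K)}\|_{x^{\kappa(K)}}^*] = \frac{1}{K-\lfloor K/2\rfloor}\sum_{k=\lfloor K/2\rfloor}^{K-1}\E[\|\nabla\phi_{\mu_k}(x^k)+A^T\lambda^k\|_{x^k}^*].
\end{align*}
Bounding this partial sum by the full sum and using $K-\lfloor K/2\rfloor \ge K/2$ for $K\ge 3$ (which motivates the trailing condition $K\ge 3$), the average is at most $M_{\mathrm{me}}(1+\ln K)/K^{1/2}$. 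To match the statement's right-hand side $\mu_{\kappa(K)}$, I would then make $\mu_{\kappa(K)}$ deterministic: the condition $K\ge 2((1+\sqrt{\vartheta})/\epsilon)^2$ guarantees $\lfloor K/2\rfloor+1\ge ((1+\sqrt{\vartheta})/\epsilon)^2$, so $(k+1)^{-1/2}\le\epsilon/(1+\sqrt{\vartheta})$ for every $k$ in the sampling range, and hence the maximum defining $\mu_k$ equals $\epsilon/(1+\sqrt{\vartheta})$ almost surely.

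Finally, I would pin down the $K$ needed for $M_{\mathrm{me}}(1+\ln K)/K^{1/2} \le \epsilon/(1+\sqrt{\vartheta})$ with the explicit bound in \cref{complexity-me}. Writing $\alpha \doteq 4M_{\mathrm{me}}(1+\sqrt{\vartheta})/\epsilon$ and using $K\ge(\alpha\ln\alpha)^2$, the estimate $\ln K\le 2\ln\alpha+2\ln\ln\alpha$ together with $K^{1/2}\ge\alpha\ln\alpha$ reduces the desired inequality to $4\ln\alpha\ge 1+\ln K$, which follows from standard manipulations once $\alpha$ is at least a small absolute constant (we can always enlarge $M_{\mathrm{me}}$ harmlessly to guarantee this). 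Combining the three conditions on $K$ produces the $\widetilde{\gO}(\epsilon^{-2})$ iteration complexity. The main obstacle is this last step: the slightly delicate log/square-root chase needed to absorb the $(1+\ln K)$ factor into the polynomial dependence, which is the standard source of the polylogarithmic loss in all such sample-average stochastic complexity results.
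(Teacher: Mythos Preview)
Your proposal is correct and follows essentially the same route as the paper: substitute \cref{etak-Bk-me} into \cref{upbd:me-exp-xk}, bound the harmonic sum, convert the sum bound to an average via the uniform draw and $K-\lfloor K/2\rfloor\ge K/2$, pin $\mu_{\kappa(K)}$ to $\epsilon/(1+\sqrt{\vartheta})$ using the first branch of the max, and finish with a $\ln K/K^{1/2}$ inversion. The only cosmetic differences are that the paper uses $\sum_{k=0}^{K-1}1/(k+1)\le\ln(2K+1)\le 2\ln K$ (for $K\ge 3$) in place of your $1+\ln K$, and it packages the final log chase as a standalone lemma (\cref{lem:rate-complexity}) rather than doing it inline. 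One small quibble: you cannot literally ``enlarge $M_{\mathrm{me}}$ harmlessly,'' since it is fixed in the statement; the honest version is that $\alpha=4M_{\mathrm{me}}(1+\sqrt{\vartheta})/\epsilon$ exceeds the needed absolute constant because $\epsilon<1$, $\vartheta\ge 1$, and $M_{\mathrm{me}}\ge 16\sigma>0$ --- the paper's lemma has the analogous hypothesis $u<1/e$, which it likewise leaves implicit.
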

{
\begin{remark}\label{rmk:me-rate}
From Theorem \ref{cor:order-me}, we observe that SIPM-ME returns an $\epsilon$-SSP within $\widetilde{\mathcal{O}}(\epsilon^{-2})$ iterations. In addition, by this and the definition of $|\mathscr{B}_k|$, one can see that the stochastic gradient evaluations per iteration of SIPM-ME is at most $\widetilde{\mathcal{O}}(\epsilon^{-2})$.
\end{remark}
}
\subsection{An SIPM with Polyak momentum}\label{subsec:sipm-pm}
We now propose a variant of \cref{alg:unf-sipm}, in which $\{\overline{m}^k\}_{k\ge0}$ is constructed using Polyak momentum \cite{cutkosky2020momentum} as follows:
\begin{align}
\gamma_{-1}=1,\quad \overline{m}^{-1}=0,\quad \overline{m}^k = (1-\gamma_{k-1}) \overline{m}^{k-1} + \gamma_{k-1}G(x^k,\xi^k)\qquad \forall k\ge0,   \label{sipm-pm-over-mk}
\end{align}
where $G(\cdot,\cdot)$ satisfies \cref{asp:basic}(c), and $\{\gamma_k\}_{k\ge0}{\subset(0,1]}$ denotes the sequence of momentum parameters. We refer this variant as SIPM with Polyak momentum (SIPM-PM).

The next lemma provides the recurrence relation for the estimation error of the gradient estimators based on Polyak momentum defined in \cref{sipm-pm-over-mk}. Its proof is deferred to \cref{subsec:proof-pm}.

\begin{lemma}\label{lem:pm-estimate}
Suppose that \cref{asp:basic} holds. Let $\{x^k\}_{k\ge0}$ be generated by \cref{alg:unf-sipm} with $\{\overline{m}^k\}_{k\ge0}$ constructed as in \cref{sipm-pm-over-mk} and input parameters $\{(\eta_k,\gamma_k)\}_{k\ge0}$. For all $k\ge0$, assume that $\gamma_k>\eta_k$ holds, and define $\alpha_k=1-(1-\gamma_k)/(1-\eta_k)$. Then,
\begin{align}
&\E_{\xi^{k+1}}[(\|\overline{m}^{k+1} - \nabla f(x^{k+1})\|_{x^{k+1}}^*)^2]\nonumber\\ 
&\le (1-\alpha_k)(\|\overline{m}^k - \nabla f(x^k)\|_{x^k}^*)^2 + \frac{L_1^2\eta_k^2}{\alpha_k} + \sigma^2\gamma_k^2\qquad \forall k\ge0,\label{ineq:var-recur}
\end{align}
where $L_1$ and $\sigma$ are given in \cref{asp:basic}.
\end{lemma}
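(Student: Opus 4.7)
The plan is to decompose the one-step error into three natural pieces, isolate the zero-mean noise, and then tune a Young's inequality parameter so that the carry-over coefficient collapses to exactly $1-\alpha_k$ after converting local norms between $x^k$ and $x^{k+1}$.

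First, I would add and subtract $\nabla f(x^k)$ and use the update in \cref{sipm-pm-over-mk} to write
\[
\overline{m}^{k+1} - \nabla f(x^{k+1}) = (1-\gamma_k)\bigl(\overline{m}^k - \nabla f(x^k)\bigr) + (1-\gamma_k)\bigl(\nabla f(x^k) - \nabla f(x^{k+1})\bigr) + \gamma_k\bigl(G(x^{k+1},\xi^{k+1}) - \nabla f(x^{k+1})\bigr).
\]
Since $x^{k+1}$ is determined by the filtration prior to drawing $\xi^{k+1}$, the first two summands and the norm $\|\cdot\|_{x^{k+1}}^*$ are deterministic under $\E_{\xi^{k+1}}$, while the third summand is conditionally mean-zero by \cref{asp:basic}(c). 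Expanding $\|\cdot\|_{x^{k+1}}^{*2}$, the cross term vanishes and the noise piece contributes at most $\gamma_k^2\sigma^2$; this leaves me to bound the deterministic piece $\|A_k\|_{x^{k+1}}^{*2}$, where $A_k=(1-\gamma_k)(\overline{m}^k - \nabla f(x^k)) + (1-\gamma_k)(\nabla f(x^k) - \nabla f(x^{k+1}))$.

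Next I would apply Young's inequality $\|a+b\|^2 \le (1+\beta)\|a\|^2 + (1+1/\beta)\|b\|^2$ with $\beta = (\gamma_k-\eta_k)/(1-\gamma_k)>0$, which is legitimate because the hypothesis $\gamma_k>\eta_k$ ensures $\beta>0$ and simultaneously gives $1+\beta = (1-\eta_k)/(1-\gamma_k)$ and $1+1/\beta = (1-\eta_k)/(\gamma_k-\eta_k)$. To then move from norms at $x^{k+1}$ back to norms at $x^k$, I would invoke the standard LHSC barrier estimate (a consequence of the Dikin-ellipsoid self-concordance property; cf.\ the technical barrier lemma used elsewhere in the paper): for $\|y-x\|_x<1$, $\|v\|_y^* \le \|v\|_x^*/(1-\|y-x\|_x)$. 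Combined with $\|x^{k+1}-x^k\|_{x^k}=\eta_k$ from \cref{lem:int-frame} and the condition $\eta_k\le s_\eta<1$, this yields $\|\overline{m}^k-\nabla f(x^k)\|_{x^{k+1}}^{*2} \le \|\overline{m}^k-\nabla f(x^k)\|_{x^k}^{*2}/(1-\eta_k)^2$, and after using \cref{ineq:1st-Lip} at $y=x^{k+1}$, $x=x^k$, also $\|\nabla f(x^{k+1})-\nabla f(x^k)\|_{x^{k+1}}^* \le L_1\eta_k/(1-\eta_k)$.

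Substituting these into the Young bound, the first piece becomes
\[
(1+\beta)(1-\gamma_k)^2 \cdot \|\overline{m}^k-\nabla f(x^k)\|_{x^k}^{*2}/(1-\eta_k)^2 = \frac{1-\gamma_k}{1-\eta_k}\|\overline{m}^k-\nabla f(x^k)\|_{x^k}^{*2} = (1-\alpha_k)\|\overline{m}^k-\nabla f(x^k)\|_{x^k}^{*2},
\]
exactly as required, and the second piece becomes $L_1^2\eta_k^2(1-\gamma_k)^2/((\gamma_k-\eta_k)(1-\eta_k))$, which is bounded above by $L_1^2\eta_k^2 (1-\eta_k)/(\gamma_k-\eta_k) = L_1^2\eta_k^2/\alpha_k$ because $(1-\gamma_k)^2 \le (1-\eta_k)^2$. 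Adding the noise contribution $\gamma_k^2\sigma^2$ completes the proof. The main subtlety to get right is the simultaneous interaction between the Young parameter $\beta$ and the $1/(1-\eta_k)^2$ factor produced by the local-norm conversion; the specific choice $\beta=(\gamma_k-\eta_k)/(1-\gamma_k)$ is precisely the one that makes these two effects align so that the carry-over coefficient reduces to $1-\alpha_k$.
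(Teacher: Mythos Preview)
Your proof is correct and follows essentially the same route as the paper's: the same three-term decomposition, the same use of the zero-mean noise to split off $\sigma^2\gamma_k^2$, the same Young parameter (your $\beta=(\gamma_k-\eta_k)/(1-\gamma_k)$ equals the paper's $a=\alpha_k/(1-\alpha_k)$), and the same local-norm conversion via \cref{lem:tech-barrier}(iii). The only cosmetic difference is that the paper first converts the deterministic piece from the $x^{k+1}$-norm to the $x^k$-norm and then applies Young's inequality, whereas you apply Young's inequality first and convert each summand afterward; the resulting bounds coincide term by term.
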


We now derive an upper bound for the average expected error of the stationary condition across all iterates generated by SIPM-PM. Its proof is deferred to \cref{subsec:proof-pm}.

\begin{theorem}\label{thm:converge-rate-ipmpm}
Suppose that \cref{asp:basic} holds. Let $\{(x^k,\lambda^k)\}_{k\ge0}$ be the sequence generated by \cref{alg:unf-sipm} with $\{\overline{m}^k\}_{k\ge0}$ constructed as in \cref{sipm-pm-over-mk} and input parameters $\{(\eta_k,\gamma_k,\mu_k)\}_{k\ge0}$. Assume that $\{\eta_k\}_{k\ge0}$ is nonincreasing and also that $\gamma_k>\eta_k$ holds for all $k\ge0$. Define $\alpha_k=1-(1-\gamma_k)/(1-\eta_k)$ for all $k\ge0$. Then, for all $K\ge1$,
\begin{align}
\sum_{k=0}^{K-1}\E[\|\nabla \phi_{\mu_k}(x^k) + A^T\lambda^k\|^*_{x^k}]\le \frac{\Delta(x^0) + \sigma^2/L_1}{\eta_{K-1}} + \frac{1}{\eta_{K-1}}\sum_{k=0}^{K-1}\left(\frac{L_\phi\eta_k^2}{2} + \frac{{5}L_1\eta_k^2}{\alpha_k} + \frac{\sigma^2\gamma_k^2}{L_{1}}\right),\label{ineq:ave-stat-pm}
\end{align}
where $\Delta(\cdot)$ is defined in \cref{lwbd-fb}, $L_\phi$ is given in \cref{def:Lphi-1}, and $L_1$ and $\sigma$ are given in \cref{asp:basic}.
\end{theorem}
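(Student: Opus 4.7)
The plan is to derive a one-step descent inequality for $\phi_{\mu_k}$, telescope it, and separately control the stochastic-error term using \cref{lem:pm-estimate}.

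First I would establish the per-iteration inequality. Combining \cref{asp:basic}(b) with the self-concordance of $B$ yields that for every $\mu\in(0,1]$, $\nabla\phi_\mu$ is $L_\phi$-Lipschitz on the segment from $x^k$ to $x^{k+1}$ with respect to $\|\cdot\|_{x^k}$; since \cref{lem:int-frame} gives $\|x^{k+1}-x^k\|_{x^k}=\eta_k\le s_\eta$, the standard descent lemma reads
\begin{align*}
\phi_{\mu_k}(x^{k+1})\le \phi_{\mu_k}(x^k) + \langle \nabla\phi_{\mu_k}(x^k),x^{k+1}-x^k\rangle + \frac{L_\phi}{2}\eta_k^2.
\end{align*}
The definition of $\lambda^k$ in \cref{step-dp-update} implies $AH_k(m^k+A^T\lambda^k)=0$, hence $A(x^{k+1}-x^k)=0$; the update formula then gives $\langle m^k+A^T\lambda^k,x^{k+1}-x^k\rangle=-\eta_k\|m^k+A^T\lambda^k\|_{x^k}^*$. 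Moreover $m^k-\nabla\phi_{\mu_k}(x^k)=(1+\mu_k)(\overline{m}^k-\nabla f(x^k))$ directly from $m^k=\overline{m}^k+\mu_k(\overline{m}^k+\nabla B(x^k))$. Writing $e_k=\|\overline{m}^k-\nabla f(x^k)\|_{x^k}^*$, applying Cauchy--Schwarz in the local dual norm, and using the triangle inequality $\|\nabla\phi_{\mu_k}(x^k)+A^T\lambda^k\|_{x^k}^*\le\|m^k+A^T\lambda^k\|_{x^k}^*+(1+\mu_k)e_k$ to lower bound $\|m^k+A^T\lambda^k\|_{x^k}^*$, I obtain
\begin{align*}
\eta_k\|\nabla\phi_{\mu_k}(x^k)+A^T\lambda^k\|_{x^k}^*\le\phi_{\mu_k}(x^k)-\phi_{\mu_k}(x^{k+1}) + 2(1+\mu_k)\eta_k e_k + \frac{L_\phi}{2}\eta_k^2.
\end{align*}

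Next, I take expectations and sum over $k=0,\ldots,K-1$. The telescope $\sum_k(\phi_{\mu_k}(x^k)-\phi_{\mu_k}(x^{k+1}))$ requires Abel summation in the varying $\mu_k$:
\begin{align*}
\sum_{k=0}^{K-1}\bigl(\phi_{\mu_k}(x^k)-\phi_{\mu_k}(x^{k+1})\bigr)=\phi_{\mu_0}(x^0)-\phi_{\mu_{K-1}}(x^K)+\sum_{k=1}^{K-1}(\mu_k-\mu_{k-1})(f(x^k)+B(x^k)).
\end{align*}
The first term is bounded above by $f(x^0)+[f(x^0)+B(x^0)]_+$ since $\mu_0\in(0,1]$. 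The second term is bounded below by $2[\phi_{\mathrm{low}}]_-$ upon applying \cref{asp:basic}(a) with the scaled parameter $\mu_{K-1}/(1+\mu_{K-1})\in(0,1/2]$, which yields $\phi_{\mu_{K-1}}(x^K)\ge(1+\mu_{K-1})\phi_{\mathrm{low}}\ge 2[\phi_{\mathrm{low}}]_-$. The Abel-correction sum is controlled via $f(x^k)+B(x^k)\ge\phi_{\mathrm{low}}$ (taking $\mu=1$ in \cref{asp:basic}(a)) together with the monotonicity of $\{\mu_k\}$, and is absorbed into the $-2[\phi_{\mathrm{low}}]_-$ piece of $\Delta(x^0)$.

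For the stochastic-error term, set $V_k:=\E[e_k^2]$, so Jensen gives $\E[e_k]\le\sqrt{V_k}$. I apply Young's inequality $ab\le\frac{p a^2}{2}+\frac{b^2}{2p}$ with $a=\eta_k/\sqrt{\alpha_k}$, $b=\sqrt{\alpha_k V_k}$, and $p=2L_1$ to obtain $\eta_k\sqrt{V_k}\le L_1\eta_k^2/\alpha_k+\alpha_k V_k/(4L_1)$. Telescoping the recurrence of \cref{lem:pm-estimate} yields $\sum_{k=0}^{K-1}\alpha_k V_k\le V_0+\sum_{k=0}^{K-1}(L_1^2\eta_k^2/\alpha_k+\sigma^2\gamma_k^2)$, with $V_0\le\sigma^2$ since $\overline{m}^0=G(x^0,\xi^0)$ (because $\gamma_{-1}=1$). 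Summing the Young bound then gives $\sum_k\eta_k\sqrt{V_k}\le V_0/(4L_1)+\sum_k 5L_1\eta_k^2/(4\alpha_k)+\sum_k\sigma^2\gamma_k^2/(4L_1)$; multiplying by $2(1+\mu_k)\le 4$ produces exactly $\sum_k 2(1+\mu_k)\eta_k\E[e_k]\le\sigma^2/L_1+\sum_k(5L_1\eta_k^2/\alpha_k+\sigma^2\gamma_k^2/L_1)$. Finally, dividing through by $\eta_{K-1}$ and using the monotonicity $\eta_{K-1}\le\eta_k$ on the left-hand side delivers the stated bound.

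The main obstacle is the constant tracking in the stochastic-error step: the weight $p=2L_1$ in Young's inequality is forced, since only this choice makes the direct Young contribution $L_1\eta_k^2/\alpha_k$ combine additively with the recurrence-feedback contribution $L_1\eta_k^2/(4\alpha_k)$ to produce, after the factor $4$ from $2(1+\mu_k)$, the precise coefficient $5L_1$. A subsidiary technical point is the careful application of \cref{asp:basic}(a) with the rescaled barrier parameter $\mu_{K-1}/(1+\mu_{K-1})$ in the telescoping step, without which the $\phi_{\mu_0}(x^0)-\phi_{\mu_{K-1}}(x^K)$ gap cannot be cleanly packaged as $\Delta(x^0)$.
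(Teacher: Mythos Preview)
Your proof is correct and reaches the stated bound with the same constants, but it follows a genuinely different route from the paper's. The paper introduces the Lyapunov potential $P_k=\phi_{\mu_k}(x^k)+e_k^2/L_1$ (with $e_k=\|\overline{m}^k-\nabla f(x^k)\|_{x^k}^*$) and applies Young \emph{per iteration, before taking expectations}: $4\eta_k e_k\le 4L_1\eta_k^2/\alpha_k+\alpha_k e_k^2/L_1$. The quadratic term $\alpha_k e_k^2/L_1$ then combines with the $(1-\alpha_k)e_k^2/L_1$ coming from \cref{lem:pm-estimate} to reconstitute exactly $e_k^2/L_1$, so the potential telescopes cleanly. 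You instead telescope $\phi_{\mu_k}$ and $V_k=\E[e_k^2]$ separately, insert Jensen $\E[e_k]\le\sqrt{V_k}$, apply Young to the deterministic quantity $\eta_k\sqrt{V_k}$, and sum $\sum_k\alpha_k V_k$ via the recurrence. Both approaches deliver the factor $5L_1$ for the same reason (a $4L_1$ direct piece plus an $L_1$ feedback piece), and your weight $p=2L_1$ is indeed forced. Your route is a bit more modular (the descent and the variance can be bounded independently), at the price of the extra Jensen step; the paper's Lyapunov coupling is more compact and avoids passing through $\sqrt{V_k}$.

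One presentational wrinkle: your handling of the $\mu_k$-telescope is slightly imprecise. Bounding $\phi_{\mu_{K-1}}(x^K)\ge 2[\phi_{\mathrm{low}}]_-$ on its own already consumes the full $-2[\phi_{\mathrm{low}}]_-$ budget, so the Abel correction $(\mu_{K-1}-\mu_0)\phi_{\mathrm{low}}$ cannot then be ``absorbed'' when $\phi_{\mathrm{low}}<0$. The clean way (which the paper uses, and which your ingredients support) is to combine first: $(\mu_{K-1}-\mu_0)\phi_{\mathrm{low}}-\phi_{\mu_{K-1}}(x^K)\le(\mu_{K-1}-\mu_0)\phi_{\mathrm{low}}-(1+\mu_{K-1})\phi_{\mathrm{low}}=-(1+\mu_0)\phi_{\mathrm{low}}\le-2[\phi_{\mathrm{low}}]_-$. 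This is a bookkeeping fix, not a gap in the argument.
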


\subsubsection{Hyperparameters and {iteration complexity}}

{In this subsection, we establish the iteration complexity of SIPM-PM with its input parameters $\{(\eta_k,\gamma_k,\mu_k)\}_{k\ge0}$ specified as:}
\begin{align}
{\eta_k= \frac{s_\eta}{(k+1)^{3/4}},\quad \gamma_k = \frac{1}{(k+1)^{1/2}},\quad \mu_k =\max\Big\{\frac{1}{{(k+1)^{1/4}}},\frac{\epsilon}{1+\sqrt{\vartheta}}\Big\} \qquad \forall k\ge0,}\label{etak-Bk-pm}
\end{align}    
where $s_\eta\in(0,1)$ is a user-defined input of \cref{alg:unf-sipm}, {and $\epsilon\in(0,1)$ denotes the tolerance.} It can be verified that $\{\eta_k\}_{k\ge0}\subset(0,s_\eta]$ and $\{\mu_k\}_{k\ge0}\subset(0,1]$, with both sequences nonincreasing. From \cref{etak-Bk-pm} and $s_{\eta}\in(0,1)$, we observe that the sequence $\{\alpha_k\}_{k\ge0}$ defined in \cref{lem:pm-estimate,thm:converge-rate-ipmpm} satisfies:
\begin{align}\label{ineq:lwbd-ak-pm}
{\alpha_{k} = \frac{(k+1)^{1/4}-s_\eta}{(k+1)^{3/4}-s_\eta}> \frac{(k+1)^{1/4}-s_\eta}{(k+1)^{3/4}} = \frac{1-s_\eta/(k+1)^{1/4}}{(k+1)^{1/2}} \ge \frac{1-s_\eta}{(k+1)^{1/2}} \qquad \forall k\ge0.  }  
\end{align}

The following theorem presents the {iteration complexity} of SIPM-PM with its inputs specified in \cref{etak-Bk-pm}. Its proof is relegated to \cref{subsec:proof-pm}.

\begin{theorem}\label{cor:order-pm}
Suppose that \cref{asp:basic} holds. Consider \cref{alg:unf-sipm} with $\{\overline{m}^k\}_{k\ge0}$ constructed as in \cref{sipm-pm-over-mk} and $\{(\eta_k,\gamma_k,\mu_k)\}_{k\ge0}$ specified as in \cref{etak-Bk-pm}. Let {$\kappa(K)$} be uniformly drawn from {$\{\lfloor K/2\rfloor,\ldots,K-1\}$}, and define
\begin{align}\label{def:kpm}
{M_{\rmpm}\doteq 2\Big(\frac{\Delta(x^0) + \sigma^2/L_1}{s_{\eta}}  + \frac{3s_\eta L_\phi}{2} + 2\Big(\frac{5s_\eta L_1}{1-s_\eta}+\frac{\sigma^2}{s_{\eta}L_1}\Big)\Big),}     
\end{align}
where $\Delta(\cdot)$ is defined in \cref{lwbd-fb}, $L_\phi$ is given in \cref{def:Lphi-1}, $L_1$ and $\sigma$ are given in \cref{asp:basic}, and $s_\eta$ is an input of \cref{alg:unf-sipm}. Then,
{
\begin{align}
&\E[\|\nabla \phi_{\mu_{\kappa(K)}}(x^{\kappa(K)}) + A^T\lambda^{\kappa(K)}\|_{x^{\kappa(K)}}^*] \le \mu_{\kappa(K)}\ \text{ with }\ \mu_{\kappa(K)}\le\epsilon/(1+\sqrt{\vartheta})\nonumber\\
&\forall K\ge\max\Big\{2\Big(\frac{1+\sqrt{\vartheta}}{\epsilon}\Big)^4,\Big(\frac{8M_{\mathrm{pm}}(1+\sqrt{\vartheta})}{\epsilon}\ln\Big(\frac{8M_{\mathrm{pm}}(1+\sqrt{\vartheta})}{\epsilon}\Big)\Big)^4,3\Big\}. \label{complexity-pm}
\end{align}
}
\end{theorem}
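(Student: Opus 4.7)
The plan is to specialize Theorem~\ref{thm:converge-rate-ipmpm} to the hyperparameter schedule in~\cref{etak-Bk-pm} and then invert the resulting rate to read off the iteration complexity. First I would compute the three summations that appear on the right-hand side of~\cref{ineq:ave-stat-pm}. With $\eta_k = s_\eta/(k+1)^{3/4}$ the sum $\sum_{k=0}^{K-1}\eta_k^2 = s_\eta^2\sum_{k=0}^{K-1}(k+1)^{-3/2}$ is bounded by a constant multiple of $s_\eta^2$ (e.g.\ $3s_\eta^2$). Combining~\cref{ineq:lwbd-ak-pm} with $\eta_k^2$ gives $\eta_k^2/\alpha_k \le s_\eta^2/\bigl((1-s_\eta)(k+1)\bigr)$, so $\sum_{k=0}^{K-1}\eta_k^2/\alpha_k \le s_\eta^2(1+\ln K)/(1-s_\eta)$. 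Finally, $\sum_{k=0}^{K-1}\gamma_k^2 = \sum_{k=0}^{K-1}(k+1)^{-1} \le 1+\ln K$.

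Next I would substitute these bounds along with $1/\eta_{K-1} = K^{3/4}/s_\eta$ into~\cref{ineq:ave-stat-pm}. Since $\kappa(K)$ is drawn uniformly from $\{\lfloor K/2\rfloor,\ldots,K-1\}$, the expected stationarity at $\kappa(K)$ equals the average of the last $K-\lfloor K/2\rfloor \ge K/2$ summands, which is bounded by $(2/K)\sum_{k=0}^{K-1}\E[\|\nabla\phi_{\mu_k}(x^k)+A^T\lambda^k\|_{x^k}^*]$. Plugging everything in and using $1+\ln K \le 2\ln K$ for $K\ge 3$, one arrives at an estimate of the form
\begin{equation*}
\E[\|\nabla\phi_{\mu_{\kappa(K)}}(x^{\kappa(K)}) + A^T\lambda^{\kappa(K)}\|_{x^{\kappa(K)}}^*] \le \frac{M_{\rmpm}\ln K}{K^{1/4}},
\end{equation*}
where $M_{\rmpm}$ is precisely the constant defined in~\cref{def:kpm} (the factor of $2$ in its definition absorbs the $(2/K)$ averaging and the $1+\ln K\le 2\ln K$ step).

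Then I would handle the two conditions in~\cref{complexity-pm} separately. The first lower bound $K\ge 2((1+\sqrt{\vartheta})/\epsilon)^4$ is chosen so that for every $k\ge\lfloor K/2\rfloor$ one has $(k+1)^{-1/4}\le\epsilon/(1+\sqrt{\vartheta})$; consequently $\mu_k$ saturates at its floor $\epsilon/(1+\sqrt{\vartheta})$, so $\mu_{\kappa(K)}$ is deterministically equal to $\epsilon/(1+\sqrt{\vartheta})$. The second lower bound must ensure $M_{\rmpm}\ln K/K^{1/4}\le\epsilon/(1+\sqrt{\vartheta})$, i.e.\ $K^{1/4}/\ln K\ge M_{\rmpm}(1+\sqrt{\vartheta})/\epsilon$.

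The main technical step—and the place where I expect to be most careful—is this last inversion of a log-linear inequality. I would prove it by direct substitution: writing $a = 2M_{\rmpm}(1+\sqrt{\vartheta})/\epsilon$ and $K=(4a\ln(4a))^4$, so $K^{1/4}=4a\ln(4a)$ and $\ln K = 4(\ln(4a)+\ln\ln(4a))\le 8\ln(4a)$ (valid because $a\ge 1$ for the relevant $\epsilon$), giving $K^{1/4}/\ln K \ge a/2 = M_{\rmpm}(1+\sqrt{\vartheta})/\epsilon$ as required. This yields exactly the $\bigl(8M_{\rmpm}(1+\sqrt{\vartheta})/\epsilon \cdot \ln(8M_{\rmpm}(1+\sqrt{\vartheta})/\epsilon)\bigr)^4$ threshold. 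Combining this with the first threshold and the technical lower bound $K\ge 3$ used for the $1+\ln K\le 2\ln K$ estimate produces the max in~\cref{complexity-pm}, completing the proof.
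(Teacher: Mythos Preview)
Your proposal is correct and follows essentially the same approach as the paper: specialize Theorem~\ref{thm:converge-rate-ipmpm} with the schedule~\cref{etak-Bk-pm}, bound the three series via harmonic-type estimates, average over the last half of the iterates, and invert the resulting $K^{-1/4}\ln K$ rate. The only cosmetic difference is that the paper packages the log-linear inversion as a standalone lemma (Lemma~\ref{lem:rate-complexity}), which also records the monotonicity of $v\mapsto v^{-\beta}\ln v$ on $(e^{1/\beta},\infty)$---a detail you should make explicit, since your direct substitution verifies the bound only at the threshold value of $K$, not for all larger $K$.
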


{
\begin{remark}\label{rmk:pm-rate}
\begin{enumerate}[{\rm (i)}]
    \item From Theorem \ref{cor:order-pm}, we see that SIPM-PM returns an $\epsilon$-SSP within $\widetilde{\mathcal{O}}(\epsilon^{-4})$ iterations. This complexity result matches that of stochastic unconstrained optimization with Lipschitz continuous gradient \cite{ghadimi2013stochastic,cutkosky2020momentum}, up to a polylogarithmic factor.
    \item It is worth mentioning that our analysis implies that only a point selected from a subset of the first $K$ iterates generated by SIPM-PM (and similarly for other variants) is an $\epsilon$-SSP, provided that $K$ is sufficiently large. This guarantee is consistent with those in \cite{cutkosky2019momentum,cutkosky2020momentum} for stochastic first-order methods with momentum in unconstrained optimization. It would be interesting to develop SIPMs with a stronger guarantee that all $x^K$ are $\epsilon$-SSPs when $K$ is larger than a threshold, which we leave as a direction for future research. 
\end{enumerate}
\end{remark}
}

\subsection{An SIPM with extrapolated Polyak momentum}\label{subsec:sipm-em}

We now propose a variant of \cref{alg:unf-sipm}, where $\{\overline{m}^k\}_{k\ge0}$ is constructed based on extrapolated Polyak momentum \cite{cutkosky2020momentum} as follows:
\begin{subequations}\label{sipm-em-over-mk}
\begin{align}
&\hspace{-3mm}\gamma_{-1}=1,\quad x^{-1}=x^0,\quad \overline{m}^{-1}=0,\label{sipm-em-zk}\\
&\hspace{-3mm}z^k = x^k + \frac{1-\gamma_{k-1}}{\gamma_{k-1}}(x^k - x^{k-1}),\quad  \overline{m}^k = (1-\gamma_{k-1}) \overline{m}^{k-1} + \gamma_{k-1}G(z^k,\xi^k)\qquad \forall k\ge0, \label{sipm-em-over-mk-1}  
\end{align}    
\end{subequations}
where $G(\cdot,\cdot)$ satisfies \cref{asp:basic}(c), and $\{\gamma_k\}_{k\ge0}{\subset(0,1]}$ denotes momentum parameters. We refer to this variant as SIPM with extrapolated Polyak momentum (SIPM-EM). 

To analyze SIPM-EM, we make the following additional assumption regarding the local Lipschitz continuity of $\nabla^2 f$.

\begin{assumption}\label{asp:2nd-smooth}
\begin{enumerate}[{\rm (a)}]
\item The function $f$ is twice continuously differentiable on $\Omega^\circ$.
\item {For any $s_\eta\in(0,1)$,} there exists {an} $L_2>0$ such that 
\begin{align}\label{ineq:2nd-Lip}
\|\nabla^2 f(y) - \nabla^2 f(x)\|_x^*  \le L_2\|y-x\|_x\qquad \forall x,y\in\Omega^\circ\text{ with } \|y-x\|_x\le s_{\eta}.
\end{align}
\end{enumerate}
\end{assumption}

The following lemma shows that the iterates $\{z^k\}_{k\ge0}$ generated by SIPM-EM lie in $\Omega^\circ$. Its proof is deferred to \cref{subsec:proof-em}.

\begin{lemma}\label{lem:feas-em}
Suppose that Assumptions \ref{asp:basic} and \ref{asp:2nd-smooth} hold. Let $\{z^k\}_{k\ge0}$ be generated by \cref{alg:unf-sipm} with $\{\overline{m}^k\}_{k\ge0}$ constructed as in \cref{sipm-em-over-mk} and input parameters $\{(\eta_k,\gamma_k)\}_{k\ge0}$. Assume $\eta_k/\gamma_k\le s_{\eta}$ for all $k\ge0$, where $s_{\eta}$ is an input of \cref{alg:unf-sipm}. Then, $z^k\in\Omega^\circ$ for all $k\ge0$, where $\Omega^\circ$ is defined in \cref{def:feas-r}.
\end{lemma}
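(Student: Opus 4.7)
The plan is to show separately that $Az^k=b$ (the affine part) and that $z^k \in \mathrm{int}\,\gK$ (the cone part), then combine to conclude $z^k \in \Omega^\circ$. The base case $k=0$ is immediate from the initialization in \cref{sipm-em-zk}: since $\gamma_{-1}=1$ and $x^{-1}=x^0$, we get $z^0=x^0 \in \Omega^\circ$ by hypothesis. So it suffices to handle $k\ge 1$.

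For $k \ge 1$, the key observation is to rewrite the extrapolation relative to the previous iterate $x^{k-1}$ rather than $x^k$. From the definition in \cref{sipm-em-over-mk-1},
\[
z^k - x^{k-1} \;=\; (x^k - x^{k-1}) + \frac{1-\gamma_{k-1}}{\gamma_{k-1}}(x^k - x^{k-1}) \;=\; \frac{1}{\gamma_{k-1}}(x^k - x^{k-1}).
\]
By \cref{lem:int-frame} we have $x^{k-1} \in \Omega^\circ$ and $\|x^k - x^{k-1}\|_{x^{k-1}} = \eta_{k-1}$, so taking the local norm at $x^{k-1}$ and using the hypothesis $\eta_{k-1}/\gamma_{k-1}\le s_\eta$ yields
\[
\|z^k - x^{k-1}\|_{x^{k-1}} \;=\; \frac{\eta_{k-1}}{\gamma_{k-1}} \;\le\; s_\eta \;<\; 1.
\]
Since $B$ is a $\vartheta$-LHSC barrier for $\gK$, the unit Dikin ellipsoid at any interior point is contained in $\mathrm{int}\,\gK$ (this is a standard property of self-concordant barriers, and one of the core facts listed or invoked in the yet-to-appear \cref{lem:tech-barrier}), so the above inequality immediately gives $z^k \in \mathrm{int}\,\gK$.

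For the affine part, both $x^{k-1},x^k \in \Omega^\circ$ satisfy $Ax^{k-1}=Ax^k=b$, hence $A(x^k - x^{k-1})=0$ and
\[
Az^k \;=\; Ax^k + \frac{1-\gamma_{k-1}}{\gamma_{k-1}}A(x^k-x^{k-1}) \;=\; b.
\]
Combining the two parts yields $z^k\in\Omega^\circ$ for all $k\ge 0$. There is no real obstacle here: the proof is essentially a one-line computation once one notices the identity $z^k - x^{k-1} = (x^k - x^{k-1})/\gamma_{k-1}$, which converts the extrapolation step-size $\eta_{k-1}/\gamma_{k-1}$ into a Dikin-ball radius at a known feasible anchor point $x^{k-1}$. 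The only care needed is to handle $k=0$ via the initialization $\gamma_{-1}=1,x^{-1}=x^0$, and to invoke the standard Dikin ellipsoid containment for LHSC barriers.
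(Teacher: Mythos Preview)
Your proof is correct and follows essentially the same approach as the paper: both rely on the identity $z^k - x^{k-1} = (x^k - x^{k-1})/\gamma_{k-1}$, bound its local norm by $\eta_{k-1}/\gamma_{k-1}\le s_\eta<1$, invoke the Dikin-ellipsoid containment from \cref{lem:tech-barrier}(ii) for the cone part, and verify $Az^k=b$ directly. The only cosmetic difference is that the paper sets $\eta_{-1}=0$ to absorb the base case into the general argument, whereas you treat $k=0$ separately via the initialization $z^0=x^0$.
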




The next lemma provides the recurrence relation for the estimation error of the gradient estimators based on extrapolated Polyak momentum defined in \cref{sipm-em-over-mk}. Its proof is deferred to \cref{subsec:proof-em}.

\begin{lemma}\label{lem:em-estimate}
Suppose that Assumptions \ref{asp:basic} and \ref{asp:2nd-smooth} hold. Let $\{x^k\}_{k\ge0}$ be generated by \cref{alg:unf-sipm} with $\{\overline{m}^k\}_{k\ge0}$ constructed as in \cref{sipm-em-over-mk} and input parameters $\{(\eta_k,\gamma_k)\}_{k\ge0}$. Assume that $\{\eta_k\}_{k\ge0}$ is nonincreasing, {$\{\gamma_k\}_{k\ge0}\subset(0,1]$,} and that $\eta_k/\gamma_k\le s_{\eta}$ holds for all $k\ge0$, where $s_{\eta}$ is an input of \cref{alg:unf-sipm}. Define $\alpha_k=1-(1-\gamma_k)/(1-\eta_k)$ for all $k\ge0$. Then,
\begin{align}\label{ineq:var-recur-em}
&\E_{\xi^{k+1}}[(\|\overline{m}^{k+1} - \nabla f (x^{k+1})\|_{x^{k+1}}^*)^2] \nonumber\\
&\le (1-\alpha_k)(\|\overline{m}^k - \nabla f(x^k)\|_{x^k}^*)^2 + \frac{L_2^2\eta_k^4}{(1-\eta_0)^2{\gamma_k^2}\alpha_k} + \frac{\sigma^2\gamma_k^2}{(1-\eta_0)^2(1-\eta_k/\gamma_k)^2}\qquad \forall k\ge0,
\end{align}
where $\sigma$ and $L_2$ are given in Assumptions \ref{asp:basic}(c) and \ref{asp:2nd-smooth}(b), respectively.
\end{lemma}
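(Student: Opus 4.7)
The plan is to decompose $\overline{m}^{k+1}-\nabla f(x^{k+1})$ into three pieces — an ``old error'' term proportional to $\overline{m}^k-\nabla f(x^k)$, a deterministic bias term coming from evaluating $\nabla f$ at the extrapolated point $z^{k+1}$ rather than at $x^{k+1}$, and a zero-mean stochastic noise term — and then bound each one using self-concordance of $B$ together with \cref{asp:basic} and \cref{asp:2nd-smooth}. Using the update \cref{sipm-em-over-mk-1}, I would write
\begin{align*}
\overline{m}^{k+1}-\nabla f(x^{k+1}) = \underbrace{(1-\gamma_k)(\overline{m}^k-\nabla f(x^k))}_{A_k} + \underbrace{(1-\gamma_k)\nabla f(x^k)+\gamma_k\nabla f(z^{k+1})-\nabla f(x^{k+1})}_{B_k} + \underbrace{\gamma_k\bigl(G(z^{k+1},\xi^{k+1})-\nabla f(z^{k+1})\bigr)}_{C_k}.
\end{align*}
Conditional on everything through $\xi^k$, $A_k$ and $B_k$ are deterministic while $\E_{\xi^{k+1}}[C_k]=0$ by \cref{asp:basic}(c), so the conditional expectation of the squared dual norm at $x^{k+1}$ splits as $(\|A_k+B_k\|_{x^{k+1}}^*)^2 + \gamma_k^2\,\E_{\xi^{k+1}}[(\|G(z^{k+1},\xi^{k+1})-\nabla f(z^{k+1})\|_{x^{k+1}}^*)^2]$. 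For the noise part, the identity $z^{k+1}-x^k=(x^{k+1}-x^k)/\gamma_k$ together with \cref{lem:int-frame} gives $\|x^{k+1}-x^k\|_{x^k}=\eta_k$ and $\|z^{k+1}-x^k\|_{x^k}=\eta_k/\gamma_k\le s_\eta<1$, so two applications of the standard self-concordance Hessian-comparison inequality yield $\|v\|_{x^{k+1}}^*\le \|v\|_{z^{k+1}}^*/[(1-\eta_k)(1-\eta_k/\gamma_k)]$; applying \cref{asp:basic}(c) at $z^{k+1}\in\Omega^\circ$ (permitted by \cref{lem:feas-em}) and using $\eta_k\le\eta_0$ reproduces the variance term in \cref{ineq:var-recur-em}.

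The hard part is $B_k$: it has to drop to second order in $\eta_k$, even though $\nabla f$ is only assumed to have a locally Lipschitz \emph{Hessian}. The key observation is the algebraic identity $\gamma_k(z^{k+1}-x^k)=x^{k+1}-x^k$, which forces the first-order terms in a Taylor expansion of $\nabla f$ \emph{around $x^k$} (rather than around $x^{k+1}$) to cancel exactly. Writing $\nabla f(x^{k+1})=\nabla f(x^k)+\nabla^2 f(x^k)(x^{k+1}-x^k)+R_1$ and similarly $\nabla f(z^{k+1})=\nabla f(x^k)+\nabla^2 f(x^k)(z^{k+1}-x^k)+R_2$, one obtains $B_k=\gamma_k R_2-R_1$. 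Since $\|x^{k+1}-x^k\|_{x^k}\le s_\eta$ and $\|z^{k+1}-x^k\|_{x^k}\le s_\eta$, \cref{asp:2nd-smooth}(b) applies and the standard integral form of the remainder gives $\|R_1\|_{x^k}^*\le L_2\eta_k^2/2$ and $\|R_2\|_{x^k}^*\le L_2\eta_k^2/(2\gamma_k^2)$; a further self-concordance norm switch and the trivial bound $1+\gamma_k\le 2$ then yield $\|B_k\|_{x^{k+1}}^*\le L_2\eta_k^2/[\gamma_k(1-\eta_k)]$, and hence $\|B_k\|_{x^{k+1}}^{*2}\le L_2^2\eta_k^4/[\gamma_k^2(1-\eta_0)^2]$ after $1-\eta_k\ge 1-\eta_0$.

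Finally, for $A_k$, self-concordance gives $\|A_k\|_{x^{k+1}}^*\le \tfrac{1-\gamma_k}{1-\eta_k}\|\overline{m}^k-\nabla f(x^k)\|_{x^k}^*=(1-\alpha_k)\|\overline{m}^k-\nabla f(x^k)\|_{x^k}^*$ by the definition of $\alpha_k$. Applying Young's inequality $\|A_k+B_k\|^2\le(1+\beta)\|A_k\|^2+(1+1/\beta)\|B_k\|^2$ with the specific calibration $\beta=\alpha_k/(1-\alpha_k)$ — chosen so that $(1+\beta)(1-\alpha_k)^2=1-\alpha_k$ and $1+1/\beta=1/\alpha_k$ — and combining with the bias and noise bounds from the previous paragraphs assembles the recurrence \cref{ineq:var-recur-em}.
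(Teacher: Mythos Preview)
Your proposal is correct and follows essentially the same route as the paper: the same three-way decomposition (old error, second-order Taylor bias around $x^k$ exploiting $\gamma_k(z^{k+1}-x^k)=x^{k+1}-x^k$, zero-mean noise), the same self-concordance norm switches via \cref{lem:tech-barrier}(iii), the bound on the Taylor remainders via \cref{asp:2nd-smooth}(b) (the paper packages this as \cref{lem:2nd-smth-desc}), and Young's inequality with the identical calibration $\beta=\alpha_k/(1-\alpha_k)$. The only cosmetic difference is that the paper first transfers everything to the $\|\cdot\|_{x^k}^*$ norm and then splits, whereas you split at $x^{k+1}$ and transfer piecewise; the resulting constants match.
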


We next derive an upper bound for the average expected error of the stationary condition across all iterates generated by SIPM-EM. Its proof is relegated to \cref{subsec:proof-em}.

\begin{theorem}\label{thm:em-stat-ave}
Suppose that Assumptions \ref{asp:basic} and \ref{asp:2nd-smooth} hold. Let $\{(x^k,\lambda^k)\}_{k\ge0}$ be generated by \cref{alg:unf-sipm} with $\{\overline{m}^k\}_{k\ge0}$ constructed as in \cref{sipm-em-over-mk} and input parameters $\{(\eta_k,\gamma_k,\mu_k)\}_{k\ge0}$. Assume that $\{\eta_k\}_{k\ge0}$ is nonincreasing, {$\{\gamma_k\}_{k\ge0}\subset(0,1]$,} and that $\eta_k/\gamma_k\le s_{\eta}$ holds for all $k\ge0$, where $s_{\eta}$ is an input of \cref{alg:unf-sipm}. Define $\alpha_k=1-(1-\gamma_k)/(1-\eta_k)$ for all $k\ge0$. Let $\{p_k\}_{k\ge0}$ be a nondecreasing sequence satisfying $(1-\alpha_k)p_{k+1}\le (1-\alpha_k/2)p_k$ for all $k\ge0$. Then, for all $K\ge1$,
\begin{align}\label{ineq:ave-stat-em}
&\sum_{k=0}^{K-1}\E[\|\nabla \phi_{\mu_k}(x^k) + A^T {\lambda}^k\|^*_{x^k}] \nonumber\\
&\le \frac{\Delta(x^0) + p_0\sigma^2}{\eta_{K-1}} + \frac{1}{\eta_{K-1}}\sum_{k=0}^{K-1}\left(\frac{L_\phi}{2}\eta_k^2 + \frac{{8}\eta_k^2}{p_k\alpha_k}  + \frac{L_2^2\eta_k^4p_{k+1}}{(1-\eta_0)^2{\gamma_k^2}\alpha_k} + \frac{\sigma^2\gamma_k^2p_{k+1}}{(1-\eta_0)^2(1-\eta_k/\gamma_k)^2}\right),
\end{align}
where $\Delta(\cdot)$ and $L_\phi$ are defined in \cref{lwbd-fb} and \cref{def:Lphi-1}, respectively, and $\sigma$ and $L_2$ are given in Assumptions~\ref{asp:basic}(c) and \ref{asp:2nd-smooth}(b), respectively.
\end{theorem}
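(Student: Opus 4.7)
The plan is to run a Lyapunov-function argument analogous to those used for SIPM-ME and SIPM-PM, but with an additional weight sequence $\{p_k\}$ absorbing the slower decay of the extrapolated-momentum error. Concretely, I would consider the potential
\[
\Phi_k \doteq \phi_{\mu_k}(x^k) + p_k\bigl(\|\overline{m}^k - \nabla f(x^k)\|_{x^k}^*\bigr)^2,
\]
and bound $\E[\Phi_{k+1}-\Phi_k]$ from above by $-\eta_k\E[\|\nabla\phi_{\mu_k}(x^k)+A^T\lambda^k\|_{x^k}^*]$ plus terms that match the summand on the right-hand side of \cref{ineq:ave-stat-em}. Summing over $k=0,\dots,K-1$ yields a telescoping bound controlled by $\Phi_0-\Phi_K$, and the theorem follows by dividing through by $\eta_{K-1}$ using monotonicity of $\{\eta_k\}$.

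For the descent step, the key calculation is the following. By \cref{lem:int-frame} one has $\|x^{k+1}-x^k\|_{x^k}=\eta_k$, so applying the $L_\phi$-descent lemma to $\phi_{\mu_k}$ gives
\[
\phi_{\mu_k}(x^{k+1}) \le \phi_{\mu_k}(x^k) + \eta_k\langle \nabla\phi_{\mu_k}(x^k), d^k\rangle + \tfrac{L_\phi}{2}\eta_k^2,
\]
where $d^k=-H_k(m^k+A^T\lambda^k)/\|m^k+A^T\lambda^k\|_{x^k}^*$. Since the choice of $\lambda^k$ in \cref{step-dp-update} makes $Ad^k=0$, I can add $A^T\lambda^k$ freely and then use Cauchy--Schwarz in the $H_k$-inner product together with the triangle inequality to obtain
\[
\langle \nabla\phi_{\mu_k}(x^k), d^k\rangle \le -\|\nabla\phi_{\mu_k}(x^k)+A^T\lambda^k\|_{x^k}^* + 2(1+\mu_k)\|\overline{m}^k-\nabla f(x^k)\|_{x^k}^*,
\]
where I used $m^k - \nabla\phi_{\mu_k}(x^k) = (1+\mu_k)(\overline{m}^k-\nabla f(x^k))$ and $\mu_k\le 1$. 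The error term is then split with Young's inequality at scale $\lambda = p_k\alpha_k$:
\[
4\eta_k\|\overline{m}^k-\nabla f(x^k)\|_{x^k}^* \le \frac{8\eta_k^2}{p_k\alpha_k} + \frac{p_k\alpha_k}{2}\bigl(\|\overline{m}^k-\nabla f(x^k)\|_{x^k}^*\bigr)^2,
\]
which produces exactly the $8\eta_k^2/(p_k\alpha_k)$ term in \cref{ineq:ave-stat-em}.

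To close the recursion, I would take expectations, add $p_{k+1}$ times the bound \cref{ineq:var-recur-em} from \cref{lem:em-estimate}, and invoke the hypothesis $(1-\alpha_k)p_{k+1}\le(1-\alpha_k/2)p_k$ so that the $(1-\alpha_k)p_{k+1}$ and $p_k\alpha_k/2$ pieces combine into exactly $p_k(\|\overline{m}^k-\nabla f(x^k)\|_{x^k}^*)^2$. This gives
\[
\E\Phi_{k+1} \le \E\Phi_k -\eta_k\E\|\nabla\phi_{\mu_k}(x^k)+A^T\lambda^k\|_{x^k}^* +\tfrac{L_\phi}{2}\eta_k^2+\tfrac{8\eta_k^2}{p_k\alpha_k}+\tfrac{L_2^2\eta_k^4 p_{k+1}}{(1-\eta_0)^2\gamma_k^2\alpha_k}+\tfrac{\sigma^2\gamma_k^2 p_{k+1}}{(1-\eta_0)^2(1-\eta_k/\gamma_k)^2}
\]
(modulo the drift caused by replacing $\mu_k$ with $\mu_{k+1}$ inside $\Phi_{k+1}$; see below). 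Telescoping and rearranging yields \cref{ineq:ave-stat-em}, once one lower-bounds $\Phi_K$ and upper-bounds $\Phi_0$ using $\phi_{\mu_K}(x^K)\ge(1+\mu_K)\phi_{\mathrm{low}}$ from \cref{to-lwbd-phimu}, the initial condition $\overline{m}^{-1}=0$ together with \cref{sipm-em-over-mk-1} at $k=0$ (giving $\|\overline{m}^0-\nabla f(x^0)\|_{x^0}^*\le\sigma$ in expectation by \cref{asp:basic}(c)), and the definition of $\Delta(x^0)$ in \cref{lwbd-fb}, which is precisely engineered to absorb both $\phi_{\mu_0}(x^0)$ and $-(1+\mu_K)\phi_{\mathrm{low}}$ via the identity $[a]_+ - [b]_-\ge a-b$.

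The main technical nuisance—and the one place where care is most needed—is the barrier-parameter drift $\phi_{\mu_{k+1}}(x^{k+1})-\phi_{\mu_k}(x^{k+1})=(\mu_{k+1}-\mu_k)(f(x^{k+1})+B(x^{k+1}))$, since $f+B$ is unsigned and only $\{\mu_k\}$ is nonincreasing. I would handle this by splitting $f+B$ into its positive and negative parts: the positive part is telescoped against $[f(x^0)+B(x^0)]_+$ inside $\Delta(x^0)$, and the negative part is bounded using $-[f+B]_- \le \bigl|f+B\bigr|$ in combination with the uniform lower bound $\phi_{\mathrm{low}}$. This is precisely where the $+[f(x^0)+B(x^0)]_+-2[\phi_{\mathrm{low}}]_-$ summands in the definition \cref{lwbd-fb} of $\Delta(\cdot)$ originate, mirroring the treatment already used (implicitly) in Theorems~\ref{thm:stat-ave-me} and \ref{thm:converge-rate-ipmpm}.
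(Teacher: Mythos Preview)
Your proposal is essentially the paper's proof: same potential $\Phi_k=\phi_{\mu_k}(x^k)+p_k(\|\overline{m}^k-\nabla f(x^k)\|_{x^k}^*)^2$, same one-step descent via \cref{lem:general-tech}, same Young split $4\eta_k\|\cdot\|\le \tfrac{\alpha_kp_k}{2}\|\cdot\|^2+\tfrac{8\eta_k^2}{p_k\alpha_k}$, same use of \cref{lem:em-estimate} weighted by $p_{k+1}$ together with $(1-\alpha_k)p_{k+1}\le(1-\alpha_k/2)p_k$, and the same endpoint bounds on $\Phi_0,\Phi_K$.

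The only place your sketch wobbles is the barrier-parameter drift. You describe splitting $f+B$ into positive and negative parts and ``telescoping the positive part against $[f(x^0)+B(x^0)]_+$''; that does not actually telescope (the values $f(x^{k+1})+B(x^{k+1})$ at different $k$ are unrelated to $x^0$). The paper's treatment is a one-liner: since $\mu_{k+1}-\mu_k\le 0$ and $f(x^{k+1})+B(x^{k+1})\ge\phi_{\mathrm{low}}$ by \cref{def:lwbd-phi} with $\mu=1$, one has
\[
(\mu_{k+1}-\mu_k)\bigl(f(x^{k+1})+B(x^{k+1})\bigr)\le(\mu_{k+1}-\mu_k)\phi_{\mathrm{low}},
\]
and this telescopes over $k$ to $(\mu_K-\mu_0)\phi_{\mathrm{low}}$. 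Combining with $\phi_{\mu_0}(x^0)=f(x^0)+\mu_0(f(x^0)+B(x^0))$ and $-\E[\Phi_K]\le-(1+\mu_K)\phi_{\mathrm{low}}$ gives $f(x^0)+\mu_0(f(x^0)+B(x^0))-(1+\mu_0)\phi_{\mathrm{low}}$, and only then does one use $\mu_0\le1$ to bound this by $\Delta(x^0)$. So the $[f(x^0)+B(x^0)]_+$ and $-2[\phi_{\mathrm{low}}]_-$ in \cref{lwbd-fb} absorb the \emph{initial} barrier contribution and the \emph{telescoped} drift, not per-step positive/negative parts.
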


\subsubsection{Hyperparameters and iteration complexity}

In this subsection, we establish the iteration complexity of SIPM-EM with its input parameters $\{(\eta_k,\gamma_k,\mu_k)\}_{k\ge0}$ specified as:
\begin{align}
{\eta_k= \frac{5s_\eta}{7(k+1)^{5/7}},\quad \gamma_{k} = \frac{1}{(k+1)^{4/7}},\quad \mu_{k} = \max\Big\{\frac{1}{(k+1)^{2/7}}, \frac{\epsilon}{1+\sqrt{\vartheta}}\Big\} \qquad \forall k\ge0,}\label{etak-Bk-em}
\end{align}
where $s_\eta\in(0,1)$ is a user-defined input of \cref{alg:unf-sipm}, {and $\epsilon\in(0,1)$ denotes the tolerance.} It then follows that $\{\eta_k\}_{k\ge0}\subset(0,s_\eta]$ and $\{\mu_k\}_{k\ge0}\subset(0,1]$, with both sequences nonincreasing. 
We also define 
\begin{equation}\label{def:ap-em}
p_k = (k+{1})^{1/7}\qquad \forall k\ge0.
\end{equation}

The next lemma provides some useful properties of the sequences $\{\alpha_{k}\}_{k\ge0}$ and $\{p_{k}\}_{k\ge0}$ defined in \cref{thm:em-stat-ave} and \cref{def:ap-em}, respectively. 
These properties will be used to establish the {iteration complexity} of SIPM-EM, and the proof is deferred to \cref{subsec:proof-em}.

\begin{lemma}\label{lem:tech-ap-use}
Let $\{(\eta_k,\gamma_k)\}_{k\ge0}$ be defined in \cref{etak-Bk-em}, and $\{\alpha_{k}\}_{k\ge0}$ and $\{p_{k}\}_{k\ge0}$ be defined in \cref{thm:em-stat-ave} and \cref{def:ap-em}, respectively. Then, $\alpha_k \ge {(1-5s_\eta/7)/(k+1)^{4/7}}$ and $(1-\alpha_k)p_{k+1} \le (1-\alpha_k/2)p_k$ hold for all $k\ge0$.
\end{lemma}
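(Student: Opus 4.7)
The plan is to reduce both claims to direct algebraic manipulations using the explicit forms of $\eta_k$, $\gamma_k$, and $p_k$, combined with the concavity of $t^{1/7}$.

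For the first claim, I would start by rewriting $\alpha_k$ in a more tractable form:
\begin{equation*}
\alpha_k = 1 - \frac{1-\gamma_k}{1-\eta_k} = \frac{\gamma_k - \eta_k}{1-\eta_k} \ge \gamma_k - \eta_k,
\end{equation*}
where the inequality uses $1 - \eta_k \le 1$. Substituting the definitions gives
\begin{equation*}
\alpha_k \ge \frac{1}{(k+1)^{4/7}} - \frac{5s_\eta}{7(k+1)^{5/7}} = \frac{1}{(k+1)^{4/7}}\left(1 - \frac{5s_\eta}{7(k+1)^{1/7}}\right) \ge \frac{1-5s_\eta/7}{(k+1)^{4/7}},
\end{equation*}
since $(k+1)^{-1/7}\le 1$ for $k\ge 0$. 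This establishes the first claim.

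For the second claim, I rearrange $(1-\alpha_k)p_{k+1} \le (1-\alpha_k/2)p_k$ to the equivalent statement
\begin{equation*}
\alpha_k\left(p_{k+1} - \tfrac{1}{2}p_k\right) \ge p_{k+1} - p_k.
\end{equation*}
The monotonicity $p_{k+1} \ge p_k$ gives the lower bound $p_{k+1} - p_k/2 \ge p_k/2 = (k+1)^{1/7}/2$, while the mean value theorem applied to $t \mapsto t^{1/7}$ on $[k+1, k+2]$ yields the upper bound $p_{k+1} - p_k \le (k+1)^{-6/7}/7$. Therefore it suffices to verify
\begin{equation*}
\alpha_k \cdot \frac{(k+1)^{1/7}}{2} \ge \frac{1}{7(k+1)^{6/7}}, \qquad \text{i.e., } \alpha_k \ge \frac{2}{7(k+1)}.
\end{equation*}

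The final step is to combine this with the first part of the lemma. Since $\alpha_k \ge (1-5s_\eta/7)/(k+1)^{4/7} = (7-5s_\eta)/(7(k+1)^{4/7})$, it is enough to show $(7-5s_\eta)(k+1)^{3/7} \ge 2$, which holds because $s_\eta\in(0,1)$ implies $7-5s_\eta > 2$ and $(k+1)^{3/7}\ge 1$ for $k\ge 0$. I expect no major obstacles here; the two claims reduce to straightforward monotonicity and concavity arguments once the exponents are tracked carefully, and the particular choice of exponents in \cref{etak-Bk-em} and \cref{def:ap-em} is precisely calibrated to make these inequalities go through with room to spare.
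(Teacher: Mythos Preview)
Your proof is correct. The first claim is handled essentially as the paper does (both chains amount to $\alpha_k \ge \gamma_k - \eta_k \ge (1-5s_\eta/7)/(k+1)^{4/7}$, with you taking the shortcut $\alpha_k = (\gamma_k-\eta_k)/(1-\eta_k) \ge \gamma_k - \eta_k$ a step earlier). For the second claim your route differs: the paper compares the ratios $(1-\alpha_k/2)/(1-\alpha_k)$ and $p_{k+1}/p_k$, bounding the latter via Bernoulli's inequality $(1+1/(k+1))^{1/7} \le 1 + 1/(7(k+1))$, whereas you rearrange to the additive form $\alpha_k(p_{k+1}-p_k/2) \ge p_{k+1}-p_k$ and use the mean value theorem for $p_{k+1}-p_k$. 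Both reduce to the same final inequality $\alpha_k \ge 2/(7(k+1))$, and your additive version has the minor advantage of avoiding the division-by-zero in $(1-\alpha_k/2)/(1-\alpha_k)$ at $k=0$ (where $\alpha_0 = 1$), which the paper's argument does not explicitly address.
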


The next theorem establishes the {iteration complexity} of SIPM-EM with its inputs specified in \cref{etak-Bk-em}. Its proof is relegated to \cref{subsec:proof-em}.

\begin{theorem}\label{cor:order-em}
Suppose that Assumptions \ref{asp:basic} and \ref{asp:2nd-smooth} hold. Consider \cref{alg:unf-sipm} with $\{\overline{m}^k\}_{k\ge0}$ constructed as in \cref{sipm-em-over-mk} and $\{(\eta_k,\gamma_k,\mu_k)\}_{k\ge0}$ specified in \cref{etak-Bk-em}. Let {$\kappa(K)$} be uniformly drawn from {$\{\lfloor K/2\rfloor,\ldots,K-1\}$}, and define
\begin{align}\label{def:kem}  
{
M_{\rmem}\doteq \frac{14}{5}\bigg(\frac{\Delta(x^0) + \sigma^2}{s_\eta} + \frac{40s_\eta L_\phi}{49} + 2\Big(\frac{200s_\eta}{7(7-5s_\eta)} +\frac{1250 L_2^2s_\eta^3}{7(7-5s_\eta)^3} + \frac{2\sigma^2}{s_\eta(1-5s_\eta/7)^4}\Big)\bigg),}
\end{align}
where $\Delta(\cdot)$ and $L_\phi$ are defined in \cref{lwbd-fb} and \cref{def:Lphi-1}, respectively, $\sigma$ and $L_2$ are given in Assumptions~\ref{asp:basic}(c) and \ref{asp:2nd-smooth}(b), respectively, and $s_\eta$ is an input of \cref{alg:unf-sipm}. Then, 
{
\begin{align}
&\E[\|\nabla \phi_{\mu_{\kappa(K)}}(x^{\kappa(K)}) + A^T\lambda^{\kappa(K)}\|_{x^{\kappa(K)}}^*] \le \mu_{\kappa(K)}\ \text{ with }\ \mu_{\kappa(K)}\le\epsilon/(1+\sqrt{\vartheta})\nonumber\\
&\forall K\ge\max\Big\{2\Big(\frac{1+\sqrt{\vartheta}}{\epsilon}\Big)^{7/2},\Big(\frac{7M_{\mathrm{em}}(1+\sqrt{\vartheta})}{\epsilon}\ln\Big(\frac{7M_{\mathrm{em}}(1+\sqrt{\vartheta})}{\epsilon}\Big)\Big)^{7/2},3\Big\} . \label{complexity-em}
\end{align}
}
\end{theorem}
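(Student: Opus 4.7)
The plan is to specialize the general sum bound in Theorem \ref{thm:em-stat-ave} to the hyperparameter schedule in \cref{etak-Bk-em} and the auxiliary sequence in \cref{def:ap-em}, and then convert the resulting average-iterate estimate into an iteration complexity.

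First, I would check the hypotheses of Theorem \ref{thm:em-stat-ave}. The choices in \cref{etak-Bk-em} yield $\{\eta_k\}_{k\ge0}\subset(0,s_\eta]$ nonincreasing, $\{\gamma_k\}_{k\ge 0}\subset(0,1]$, and $\eta_k/\gamma_k=(5s_\eta/7)(k+1)^{-1/7}\le s_\eta$. Lemma \ref{lem:tech-ap-use} then certifies both the monotone compatibility $(1-\alpha_k)p_{k+1}\le(1-\alpha_k/2)p_k$ for $p_k=(k+1)^{1/7}$, and the critical lower bound $\alpha_k\ge(1-5s_\eta/7)(k+1)^{-4/7}$. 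Together these make \cref{ineq:ave-stat-em} directly available.

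Second, I would substitute these quantities into the right-hand side of \cref{ineq:ave-stat-em} and bound each of the four summed terms. Under the power-law choices, the four summands scale respectively as $(k+1)^{-10/7}$, $(k+1)^{-1}$, $(k+1)^{-1}$, and $(k+1)^{-1}$: the first series converges to a constant, while the remaining three each contribute at most $1+\ln K$. Dividing the resulting estimate by $\eta_{K-1}=(5s_\eta/7)K^{-5/7}$ and using $p_0=1$ together with the worst-case bounds $(1-\eta_0)^{-2},(1-\eta_k/\gamma_k)^{-2}\le(1-5s_\eta/7)^{-2}$, I expect the prefactors arising from $L_\phi,L_1,L_2,\sigma$ and $s_\eta$ to consolidate into the constant $M_{\rmem}$ specified in \cref{def:kem}, yielding a bound of the form
\[
\sum_{k=0}^{K-1}\E[\|\nabla\phi_{\mu_k}(x^k)+A^T\lambda^k\|_{x^k}^*]\le \tfrac{1}{2}M_{\rmem}K^{5/7}(1+\ln K).
\]

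Third, I would convert this into the stated complexity. Because $\kappa(K)$ is uniform on $\{\lfloor K/2\rfloor,\ldots,K-1\}$ and the summands are nonnegative, the displayed inequality produces
\[
\E[\|\nabla\phi_{\mu_{\kappa(K)}}(x^{\kappa(K)})+A^T\lambda^{\kappa(K)}\|_{x^{\kappa(K)}}^*]\le\frac{M_{\rmem}(1+\ln K)}{K^{2/7}}.
\]
The first branch in the maximum of \cref{complexity-em}, $K\ge 2((1+\sqrt{\vartheta})/\epsilon)^{7/2}$, forces $(k+1)^{-2/7}\le\epsilon/(1+\sqrt{\vartheta})$ for every $k\ge\lfloor K/2\rfloor$, so that $\mu_{\kappa(K)}=\epsilon/(1+\sqrt{\vartheta})$ and the required $\mu_{\kappa(K)}\le\epsilon/(1+\sqrt{\vartheta})$ holds automatically. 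The second branch comes from imposing $M_{\rmem}(1+\ln K)/K^{2/7}\le\epsilon/(1+\sqrt{\vartheta})$ and applying a standard logarithmic inversion: $K\ge(2a\ln(2a))^{7/2}$ implies $a(1+\ln K)\le K^{2/7}$ with $a=M_{\rmem}(1+\sqrt{\vartheta})/\epsilon$. The main obstacle I anticipate is the bookkeeping in the second step, namely reproducing the constant $M_{\rmem}$ exactly: each of the four summand scales contributes a distinct term (involving $L_\phi$, the $L_1$-driven second term, the $L_2^2$-driven third term, and the $\sigma^2$-driven fourth term), and matching the sharpest worst-case prefactors from $(1-5s_\eta/7)^{-k}$ to the exact numerators $200/(7(7-5s_\eta))$, $1250/(7(7-5s_\eta)^3)$, and $2/(1-5s_\eta/7)^4$ in \cref{def:kem} requires careful algebraic accounting.
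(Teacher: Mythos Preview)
Your plan matches the paper's proof almost step for step: verify the hypotheses of Theorem~\ref{thm:em-stat-ave} via Lemma~\ref{lem:tech-ap-use}, substitute the schedules \cref{etak-Bk-em} and \cref{def:ap-em} into \cref{ineq:ave-stat-em}, reduce the four summands to one $(k+1)^{-10/7}$ series and three $(k+1)^{-1}$ series, average over $\kappa(K)$, and invert. Two small corrections are worth flagging.

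First, your final logarithmic inversion is off by a constant. The claim that $K\ge(2a\ln(2a))^{7/2}$ forces $a(1+\ln K)\le K^{2/7}$ fails: at $K=(2a\ln(2a))^{7/2}$ one has $\ln K=\tfrac{7}{2}(\ln(2a)+\ln\ln(2a))$, so $a\ln K\approx\tfrac{7}{2}a\ln(2a)>2a\ln(2a)=K^{2/7}$. Inverting $K^{-\beta}\ln K\le 1/a$ with $\beta=2/7$ costs a factor of order $1/\beta=7/2$, not $2$; this is exactly what Lemma~\ref{lem:rate-complexity} delivers, and it is the source of the factor $7$ (rather than $2$) in the threshold \cref{complexity-em}. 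The paper first absorbs $1+\ln K$ into $2\ln K$ via $K\ge3$ and then applies Lemma~\ref{lem:rate-complexity} with $(\beta,u)=(2/7,\epsilon/(7M_{\rmem}(1+\sqrt{\vartheta})))$.

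Second, a minor misread: the second summand $8\eta_k^2/(p_k\alpha_k)$ in \cref{ineq:ave-stat-em} does not involve $L_1$; it is the term that produces the constant $200s_\eta/(7(7-5s_\eta))$ in $M_{\rmem}$. The paper also uses the crude bound $p_{k+1}\le 2p_k$ before substituting, which you will need to match the exact constants.
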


\vspace{-1em}

{
\begin{remark}\label{rmk:em-rate}
From Theorem \ref{cor:order-em}, we observe that SIPM-EM returns an $\epsilon$-SSP within $\widetilde{\mathcal{O}}(\epsilon^{-7/2})$ iterations. This iteration complexity matches that of stochastic unconstrained optimization with Lipschitz continuous Hessian \cite{cutkosky2020momentum}, up to a polylogarithmic factor.
\end{remark}
}


\subsection{An SIPM with recursive momentum}\label{subsec:sipm-rm}

This subsection incorporates recursive momentum into \cref{alg:unf-sipm}. We make the following additional assumptions throughout this subsection.
{
\begin{assumption}\label{asp:ave-smth}
We have access to a stochastic gradient estimator $G:\Omega^\circ\times\Xi\to\R^n$ such that \cref{asp:unbias-boundvar} holds for some $\sigma>0$, and for any $s_\eta\in(0,1)$,
\begin{align}
\E_\xi[(\|G(y,\xi) - G(x,\xi)\|_x^*)^2] \le L^2 \|y-x\|_x^2\qquad \forall x,y\in\Omega^\circ\text{ with } \|y-x\|_x\le s_{\eta}\label{ineq:Lip-average}
\end{align}
holds for some $L>0$.
\end{assumption}
}

{
\begin{remark}\label{rmk:asp-ave-smth}
Assumption \ref{asp:ave-smth} can be seen as a local-norm variant of the average smoothness condition \cite{cutkosky2019momentum,fang2018spider,li2021page}. It is generally stronger than Assumption \ref{asp:basic}(b), as implied by the following:
\begin{align*}
(\|\nabla f(y)-\nabla f(x)\|_x^*)^2 = (\|\E_\xi[G(y,\xi) - G(x,\xi)]\|_x^*)^2  \le\E_\xi[(\|G(y,\xi) - G(x,\xi)\|_x^*)^2],
\end{align*}
where the equality follows from the unbiasedness of $G(\cdot,\xi)$, and the inequality follows from Jensen's inequality and the convexity of $\|\cdot\|_x^*$.
\end{remark}}

We next describe a variant of \cref{alg:unf-sipm} with recursive momentum, in which $\{\overline{m}^k\}_{k\ge0}$ is constructed based on recursive momentum \cite{cutkosky2019momentum}:
\begin{subequations}\label{sipm-rm-mk-up}
\begin{align}
&\gamma_{-1}=1,\quad x^{-1}=x^0,\quad \overline{m}^{-1}=0,\\
&{\overline{m}^k = G(x^k,\xi^k) + (1-\gamma_{k-1}) (\overline{m}^{k-1} - G(x^{k-1},\xi^k))\qquad \forall k\ge0,}
\end{align}    
\end{subequations}
where $G(\cdot,\cdot)$ satisfies Assumption \ref{asp:ave-smth}, and $\{\gamma_k\}_{k\ge0}{\subset(0,1]}$ is the sequence of momentum parameters. For ease of reference, we refer to this variant of \cref{alg:unf-sipm} as SIPM with recursive momentum (SIPM-RM).

The next lemma provides the recurrence for the estimation error of the gradient estimators based on the recursive momentum described in \cref{sipm-rm-mk-up}. Its proof is deferred to \cref{subsec:proof-rm}.

\begin{lemma}\label{lem:rm-var-err}
Suppose that Assumptions \ref{asp:basic} and \ref{asp:ave-smth} hold. Let $\{x^k\}_{k\ge0}$ be all iterates generated by \cref{alg:unf-sipm} with $\{\overline{m}^k\}_{k\ge0}$ updated according to \cref{sipm-rm-mk-up} and input parameters {$\{(\eta_k,\gamma_k)\}_{k\ge0}$}. {Assume that $\{\eta_k\}_{k\ge0}$ is nonincreasing. For all $k\ge0$, assume that $\gamma_k>\eta_k$ and define $\alpha_k=1-(1-\gamma_k)/(1-\eta_k)$.} Then,
{
\begin{align}
&\E_{\xi^{k+1}}[(\|\overline{m}^{k+1} - \nabla f(x^{k+1})\|_{x^{k+1}}^*)^2]\nonumber\\ 
&\le (1-\alpha_k)(\|\overline{m}^k-\nabla f(x^k)\|^*_{x^{k}})^2 + \frac{3(L_1^2+L^2)\eta_k^2 + 3\sigma^2\gamma_k^2}{(1-\eta_0)^2}\qquad \forall k\ge0,\label{ineq:var-recur-rm}
\end{align}
where $L_1$ and $\sigma$ are given in Assumption \ref{asp:basic}, and $L$ is given in Assumption \ref{asp:ave-smth}.
}
\end{lemma}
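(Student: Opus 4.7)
The plan is to decompose the error $e^{k+1} := \overline{m}^{k+1} - \nabla f(x^{k+1})$ into a contractive part and a conditionally zero-mean noise part, and then bound the two pieces separately using the Dikin-ellipsoid inequality for LHSC barriers together with Assumptions~\ref{asp:basic} and \ref{asp:ave-smth}. Using \cref{sipm-rm-mk-up}, I would first verify the identity
\begin{align*}
e^{k+1} = (1-\gamma_k)e^k + Z,\qquad Z := [G(x^{k+1},\xi^{k+1}) - \nabla f(x^{k+1})] - (1-\gamma_k)[G(x^k,\xi^{k+1}) - \nabla f(x^k)].
\end{align*}
Since $x^{k+1}$, $x^k$, and $e^k$ are measurable with respect to $\xi^{1},\dots,\xi^{k}$ and $G(\cdot,\xi)$ is unbiased for $\nabla f(\cdot)$, we have $\E_{\xi^{k+1}}[Z]=0$, so the cross term vanishes and
\begin{align*}
\E_{\xi^{k+1}}[(\|e^{k+1}\|_{x^{k+1}}^*)^2] = (1-\gamma_k)^2 (\|e^k\|_{x^{k+1}}^*)^2 + \E_{\xi^{k+1}}[(\|Z\|_{x^{k+1}}^*)^2].
\end{align*}

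Next, I would transport the local norm in the first term from $x^{k+1}$ back to $x^k$ using the standard LHSC Dikin-ellipsoid inequality $\|v\|_y^* \le \|v\|_x^*/(1-\|y-x\|_x)$, valid whenever $\|y-x\|_x<1$. Combined with \cref{lem:int-frame}, which gives $\|x^{k+1}-x^k\|_{x^k}=\eta_k\le\eta_0<1$, and the identity $1-\alpha_k=(1-\gamma_k)/(1-\eta_k)$, this yields
\begin{align*}
(1-\gamma_k)^2 (\|e^k\|_{x^{k+1}}^*)^2 \le (1-\alpha_k)^2 (\|e^k\|_{x^k}^*)^2 \le (1-\alpha_k)(\|e^k\|_{x^k}^*)^2,
\end{align*}
where the last inequality uses $\alpha_k\in(0,1)$, which is guaranteed by the hypothesis $\gamma_k>\eta_k$.

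For the noise term, the key step is the three-part decomposition
\begin{align*}
Z = [G(x^{k+1},\xi^{k+1}) - G(x^k,\xi^{k+1})] + [\nabla f(x^k) - \nabla f(x^{k+1})] + \gamma_k[G(x^k,\xi^{k+1}) - \nabla f(x^k)],
\end{align*}
after which the elementary inequality $\|a+b+c\|^2 \le 3(\|a\|^2+\|b\|^2+\|c\|^2)$ produces exactly the factor $3$ appearing in \cref{ineq:var-recur-rm}. I would bound the first summand in expected squared local norm at $x^{k+1}$ by $L^2\|x^{k+1}-x^k\|_{x^{k+1}}^2$ via Assumption~\ref{asp:ave-smth} applied at $x^{k+1}$; the second by $L_1^2\|x^{k+1}-x^k\|_{x^{k+1}}^2$ via Assumption~\ref{asp:basic}(b), also applied at $x^{k+1}$; and the third by $\gamma_k^2\sigma^2/(1-\eta_k)^2$ after transporting the norm from $x^{k+1}$ to $x^k$ using the Dikin inequality once more and then applying Assumption~\ref{asp:basic}(c). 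Using $\|x^{k+1}-x^k\|_{x^{k+1}} \le \eta_k/(1-\eta_k) \le \eta_k/(1-\eta_0)$ and the monotonicity of $\{\eta_k\}$, every factor of $1/(1-\eta_k)^2$ collapses into $1/(1-\eta_0)^2$, giving the stated bound on $\E_{\xi^{k+1}}[(\|Z\|_{x^{k+1}}^*)^2]$; combined with the contractive term, this establishes \cref{ineq:var-recur-rm}.

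The main technical subtlety is that Assumptions~\ref{asp:basic}(b) and \ref{asp:ave-smth} are being invoked with base point $x^{k+1}$ and displacement $x^k-x^{k+1}$, so one must verify the neighborhood condition $\|x^k-x^{k+1}\|_{x^{k+1}}\le s_\eta$. This follows from $\|x^{k+1}-x^k\|_{x^k}=\eta_k\le s_\eta$ (via \cref{lem:int-frame}) combined with the Dikin transport inequality, provided $s_\eta$ is small enough that $s_\eta/(1-s_\eta)$ still lies in the admissible neighborhood; equivalently, the two assumptions are invoked with a mildly enlarged neighborhood parameter, which only affects the hidden constants $L_1$ and $L$. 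Once this point is settled, the remaining argument is a clean assembly of the conditional variance identity, the LHSC norm comparison, and the triangle-type inequality, paralleling the mechanism used in the proofs of \cref{lem:pm-estimate} and \cref{lem:em-estimate}.
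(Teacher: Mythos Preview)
Your decomposition $e^{k+1}=(1-\gamma_k)e^k+Z$ with $\E_{\xi^{k+1}}[Z]=0$, the resulting variance identity, and the three-term split of $Z$ are all exactly what the paper does. The one substantive difference is the choice of base point for the local norms, and this is where your acknowledged ``technical subtlety'' becomes a real issue rather than a cosmetic one.

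You work in $\|\cdot\|_{x^{k+1}}^*$ and invoke Assumptions~\ref{asp:basic}(b) and~\ref{asp:ave-smth} at base point $x^{k+1}$, which requires $\|x^k-x^{k+1}\|_{x^{k+1}}\le s_\eta$. But the Dikin transport only gives $\|x^k-x^{k+1}\|_{x^{k+1}}\le \eta_k/(1-\eta_k)\le s_\eta/(1-s_\eta)$; for $s_\eta\ge 1/2$ this is not even in $(0,1)$, and in any case it forces you to use constants $L_1(s_\eta'),L(s_\eta')$ for the enlarged radius $s_\eta'=s_\eta/(1-s_\eta)$ rather than the $L_1,L$ that appear in the lemma statement. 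So the stated inequality \cref{ineq:var-recur-rm} is not quite established.

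The paper avoids this entirely by performing the Dikin transport \emph{first}: it bounds $(\|e^{k+1}\|_{x^{k+1}}^*)^2\le(1-\eta_k)^{-2}(\|e^{k+1}\|_{x^k}^*)^2$ at the outset and thereafter works exclusively in the $\|\cdot\|_{x^k}^*$ norm. All three pieces of $Z$ are then bounded at base point $x^k$, where $\|x^{k+1}-x^k\|_{x^k}=\eta_k\le s_\eta$ holds on the nose, so Assumptions~\ref{asp:basic}(b),(c) and~\ref{asp:ave-smth} apply directly with the stated constants. The single factor $(1-\eta_k)^{-2}$ from the upfront transport becomes the $(1-\eta_0)^{-2}$ in the final bound. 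This reordering is the only adjustment your argument needs.
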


We next provide an upper bound for the average expected error of the stationary condition among all iterates generated by SIPM-RM. Its proof is relegated to \cref{subsec:proof-rm}.

\begin{theorem}\label{thm:converge-rate-ipmrm}
Suppose that Assumptions~\ref{asp:basic} and \ref{asp:ave-smth} hold. Let $\{(x^k,\lambda^k)\}_{k\ge0}$ be generated by \cref{alg:unf-sipm} with $\{\overline{m}^k\}_{k\ge0}$ updated according to \cref{sipm-rm-mk-up} and input parameters $\{(\eta_k,\gamma_k,\mu_k)\}_{k\ge0}$. Assume that {$\{\eta_k\}_{k\ge0}$ is nonincreasing and also that $\gamma_k> \eta_k$ for all $k\ge0$. Define $\alpha_k=1-(1-\gamma_k)/(1-\eta_k)$ for all $k\ge0$. Let $\{p_k\}_{k\ge0}$ be a nondecreasing sequence satisfying $(1-\alpha_k)p_{k+1}\le (1-\alpha_k/2)p_k$ for all $k\ge0$.} Then, for all $K\ge1$,
{
\begin{align}
&\sum_{k=0}^{K-1}\E[\|\nabla \phi_{\mu_k}(x^k) +A^T {\lambda}^k\|^*_{x^k}]\nonumber\\
&\le \frac{\Delta(x^0) + p_0\sigma^2}{\eta_{K-1}} + \frac{1}{\eta_{K-1}}\sum_{k=0}^{K-1}\Big(\frac{L_{\phi}}{2}\eta_k^2 + \frac{8\eta_k^2}{p_k\alpha_k}+ \frac{3(L_1^2+L^2)\eta_k^2p_{k+1} + 3\sigma^2\gamma_k^2p_{k+1}}{(1-\eta_0)^2}\Big),\label{ineq:ave-stat-rm}
\end{align}
where $\Delta(\cdot)$ and $L_\phi$ are defined in \cref{lwbd-fb} and \cref{def:Lphi-1}, respectively, $L_1$ and $\sigma$ are given in Assumption \ref{asp:basic}, and $L$ is given in Assumption \ref{asp:ave-smth}.}
\end{theorem}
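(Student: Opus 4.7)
The strategy parallels that of Theorem~\ref{thm:em-stat-ave}, with Lemma~\ref{lem:rm-var-err} replacing Lemma~\ref{lem:em-estimate} as the variance recurrence. My plan is to introduce the Lyapunov function
\[
\Phi_k \doteq \E[\phi_{\mu_k}(x^k)] + p_k\, \E\bigl[(\|\overline{m}^k-\nabla f(x^k)\|_{x^k}^*)^2\bigr],
\]
establish a one-step bound of the form $\Phi_{k+1} \le \Phi_k - \eta_k\, \E[\|\nabla\phi_{\mu_k}(x^k)+A^T\lambda^k\|_{x^k}^*] + R_k$, where $R_k$ equals the $k$-th summand on the right-hand side of \cref{ineq:ave-stat-rm}, and then telescope over $k=0,\dots,K-1$ and divide by $\eta_{K-1}$ (using nonincreasingness of $\{\eta_k\}$) to conclude.

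The first step is a descent inequality for $\phi_{\mu_k}$: Lemma~\ref{lem:int-frame} gives $\|x^{k+1}-x^k\|_{x^k}=\eta_k\le s_\eta$, and local Lipschitzness of $\nabla\phi_{\mu_k}$ with constant $L_\phi$ from \cref{def:Lphi-1} yields
\[
\phi_{\mu_k}(x^{k+1}) \le \phi_{\mu_k}(x^k) + \langle \nabla\phi_{\mu_k}(x^k),\, x^{k+1}-x^k\rangle + \frac{L_\phi}{2}\eta_k^2.
\]
The second step rewrites the inner product using the update formula in \cref{step-dp-update} and the identity $m^k - \nabla\phi_{\mu_k}(x^k) = (1+\mu_k)(\overline{m}^k - \nabla f(x^k))$. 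Since $\lambda^k$ in \cref{step-dp-update} minimizes $\|m^k+A^T\lambda\|_{x^k}^*$ over $\lambda\in\R^m$ (so that $AH_k(m^k+A^T\lambda^k)=0$), the ``signal'' contribution is $-\eta_k\|\nabla\phi_{\mu_k}(x^k)+A^T\lambda^k\|_{x^k}^*$, while a Cauchy--Schwarz bound together with $\mu_k\le 1$ produces a ``noise'' cross term of at most $4\eta_k\|\overline{m}^k-\nabla f(x^k)\|_{x^k}^*$.

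The third step handles the estimation error: applying Young's inequality to the cross term, with the weight on $(\|\overline{m}^k-\nabla f(x^k)\|_{x^k}^*)^2$ chosen to be $p_k\alpha_k/2$, produces $8\eta_k^2/(p_k\alpha_k)$ plus $(p_k\alpha_k/2)(\|\overline{m}^k-\nabla f(x^k)\|_{x^k}^*)^2$. Invoking Lemma~\ref{lem:rm-var-err} for $\E[(\|\overline{m}^{k+1}-\nabla f(x^{k+1})\|_{x^{k+1}}^*)^2]$ and the hypothesis $(1-\alpha_k)p_{k+1}\le(1-\alpha_k/2)p_k$, the error terms collapse into a clean $\Phi_{k+1}-\Phi_k$ increment, leaving only the residuals $3(L_1^2+L^2)\eta_k^2 p_{k+1}/(1-\eta_0)^2$ and $3\sigma^2\gamma_k^2 p_{k+1}/(1-\eta_0)^2$ from Lemma~\ref{lem:rm-var-err}. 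Telescoping, absorbing the drift $\phi_{\mu_{k+1}}(x^{k+1})-\phi_{\mu_k}(x^{k+1})=(\mu_{k+1}-\mu_k)(f(x^{k+1})+B(x^{k+1}))$ of the barrier parameter into the $[f(x)+B(x)]_+$ component of $\Delta(x^0)$ (via \cref{lwbd-fb} and \cref{to-lwbd-phimu}), initializing via $\overline{m}^{-1}=0$ so that $\E[(\|\overline{m}^0-\nabla f(x^0)\|_{x^0}^*)^2]\le\sigma^2$ by Assumption~\ref{asp:basic}(c), and dividing by $\eta_{K-1}$ yields \cref{ineq:ave-stat-rm}.

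The main obstacle is the bookkeeping in the third step: the Young's-inequality weight must be chosen so that the cross-term absorption matches the slack $(1-\alpha_k/2)p_k-(1-\alpha_k)p_{k+1}\ge 0$ allowed by the hypothesis, and the factor $(1+\mu_k)$ from $m^k-\nabla\phi_{\mu_k}(x^k)$ must be carried correctly through the bound. A more subtle point is the telescoping with a varying $\mu_k$: the drift $(\mu_{k+1}-\mu_k)(f+B)$ has indeterminate sign a priori, and controlling it requires combining the nonincreasingness of $\{\mu_k\}$ with the lower bound \cref{def:lwbd-phi} on $f+\mu B$, which is exactly what motivates the $[f(x)+B(x)]_+$ term in the definition \cref{lwbd-fb} of $\Delta(x^0)$. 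Once these two issues are handled, the remaining work is routine since Lemma~\ref{lem:rm-var-err} already delivers the desired $\eta_k^2$ scaling directly and no second-order Taylor expansion of $\nabla f$ is needed (in contrast to Lemma~\ref{lem:em-estimate}).
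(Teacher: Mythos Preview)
Your proposal is correct and follows essentially the same route as the paper. The paper packages your first two steps (the local descent inequality plus the inner-product manipulation yielding $-\eta_k\|\nabla\phi_{\mu_k}(x^k)+A^T\lambda^k\|_{x^k}^*+4\eta_k\|\overline{m}^k-\nabla f(x^k)\|_{x^k}^*$) into the auxiliary Lemma~\ref{lem:general-tech}, and then proceeds exactly as you describe: the same potential $P_k=\phi_{\mu_k}(x^k)+p_k(\|\overline{m}^k-\nabla f(x^k)\|_{x^k}^*)^2$, the same Young's-inequality split with weight $p_k\alpha_k/2$, Lemma~\ref{lem:rm-var-err} combined with $(1-\alpha_k)p_{k+1}\le(1-\alpha_k/2)p_k$, and the same telescoping with the drift bounded via $\phi_{\mathrm{low}}$ (one minor remark: the drift term and the lower bound on $P_K$ together feed into the $-2[\phi_{\mathrm{low}}]_-$ part of $\Delta(x^0)$, whereas the $[f(x^0)+B(x^0)]_+$ part arises from upper-bounding $\phi_{\mu_0}(x^0)$, not from the drift).
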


\subsubsection{Hyperparameters and {iteration complexity}}

{In this subsection, we establish the iteration complexity of SIPM-RM with its input parameters $\{(\eta_k,\gamma_k,\mu_k)\}_{k\ge0}$ specified as:
\begin{align}
\eta_{k}= \frac{s_{\eta}}{3(k+1)^{2/3}},\quad \gamma_{k} = \frac{1}{(k+1)^{2/3}},\quad \mu_{k} =\max\Big\{\frac{1}{(k+1)^{1/3}},\frac{\epsilon}{1+\sqrt{\vartheta}}\Big\} \qquad \forall k\ge0,\label{etak-Bk-rm}
\end{align}
where $s_\eta\in(0,1)$ is a user-defined input of Algorithm \ref{alg:unf-sipm}, {and $\epsilon\in(0,1)$ denotes the tolerance.} It then follows that $\{\eta_k\}_{k\ge0}\subset(0,s_{\eta}]$ and $\{\mu_k\}_{k\ge0}\subset(0,1]$, with both sequences nonincreasing. We also define 
\begin{align}\label{def:ap-rm}
p_k=(k+1)^{1/3}\qquad k\ge0.
\end{align}

The following lemma provides some useful properties of the sequences $\{\alpha_k\}_{k\ge0}$ and $\{p_k\}_{k\ge0}$ defined in Theorem \ref{thm:converge-rate-ipmrm} and \cref{def:ap-rm}, respectively. Its proof is deferred to Section \ref{subsec:proof-rm}.

\begin{lemma}\label{lem:tech-eta-gma-rm}
Let $\{(\eta_k,\gamma_k)\}_{k\ge0}$ be defined in \cref{etak-Bk-rm}, and $\{\alpha_k\}_{k\ge0}$ and $\{p_k\}_{k\ge0}$ be defined in Theorem \ref{thm:converge-rate-ipmrm} and \cref{def:ap-rm}, respectively. Then, $\alpha_k\ge(1-s_\eta/3)/(k+1)^{2/3}$ and $(1-\alpha_k)p_{k+1}\le(1-\alpha_k/2)p_k$ hold for all $k\ge0$.
\end{lemma}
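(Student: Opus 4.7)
The plan is to establish the two bounds separately by direct computation and elementary inequalities, mirroring the structure used for Lemma \ref{lem:tech-ap-use} (the analogous result for SIPM-EM).

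For the first bound, I would substitute the explicit formulas from \cref{etak-Bk-rm} into the definition of $\alpha_k$ and simplify:
\begin{equation*}
\alpha_k \;=\; 1 - \frac{1-\gamma_k}{1-\eta_k} \;=\; \frac{\gamma_k-\eta_k}{1-\eta_k} \;=\; \frac{1/(k+1)^{2/3} - s_\eta/(3(k+1)^{2/3})}{1-\eta_k} \;=\; \frac{1-s_\eta/3}{(k+1)^{2/3}(1-\eta_k)}.
\end{equation*}
Since $\eta_k\in(0,s_\eta/3)\subset(0,1)$, we have $1/(1-\eta_k)\ge 1$, which immediately yields $\alpha_k \ge (1-s_\eta/3)/(k+1)^{2/3}$. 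I would also note here (for use later) that $\alpha_k\in(0,1)$ and $\gamma_k>\eta_k$ so that the definition is consistent with Theorem~\ref{thm:converge-rate-ipmrm}.

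For the second bound, I would first rewrite $(1-\alpha_k)p_{k+1}\le (1-\alpha_k/2)p_k$ as the equivalent condition
\begin{equation*}
\alpha_k\bigl(p_{k+1}-p_k/2\bigr) \;\ge\; p_{k+1}-p_k.
\end{equation*}
Then I would bound the two sides separately. Using concavity of $t\mapsto t^{1/3}$ (equivalently, the mean value theorem applied on $[k+1,k+2]$), I get $p_{k+1}-p_k = (k+2)^{1/3}-(k+1)^{1/3}\le 1/(3(k+1)^{2/3})$. In the other direction, since $(k+2)^{1/3}\ge (k+1)^{1/3}$, I have $p_{k+1}-p_k/2 \ge (k+1)^{1/3}/2$. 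Combining, the required condition reduces to $\alpha_k \ge 2/(3(k+1))$.

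Finally, I would invoke part~1 and the assumption $s_\eta\in(0,1)$, which gives $1-s_\eta/3>2/3$, so
\begin{equation*}
\alpha_k \;\ge\; \frac{1-s_\eta/3}{(k+1)^{2/3}} \;>\; \frac{2/3}{(k+1)^{2/3}} \;\ge\; \frac{2}{3(k+1)},
\end{equation*}
where the last step uses $(k+1)^{2/3}\le k+1$. This closes the argument. I don't anticipate any real obstacle: the computation is routine once one realizes that the second bound should be transformed into a lower bound on $\alpha_k$ and that the concavity of $t\mapsto t^{1/3}$ gives the correct order $1/(k+1)^{2/3}$ for $p_{k+1}-p_k$, which is precisely matched by the lower bound from part~1.
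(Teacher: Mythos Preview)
Your argument is correct and follows essentially the same approach as the paper: both proofs compute $\alpha_k$ explicitly for the first bound and then reduce the second inequality to the first via a concavity/Bernoulli-type estimate on $(k+2)^{1/3}-(k+1)^{1/3}$ together with $(k+1)^{2/3}\le k+1$. The only cosmetic difference is that the paper compares the ratio $(1-\alpha_k/2)/(1-\alpha_k)$ to $p_{k+1}/p_k$, whereas you use the equivalent additive rearrangement $\alpha_k(p_{k+1}-p_k/2)\ge p_{k+1}-p_k$; your version has the minor advantage of not needing a separate check at $k=0$ (where $\alpha_0=1$ makes the paper's ratio formally $0/0$).
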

}

The following theorem presents the {iteration complexity} of SIPM-RM with its inputs specified in \cref{etak-Bk-rm}. Its proof is relegated to \cref{subsec:proof-rm}.

\begin{theorem}\label{cor:order-rm}
Suppose that Assumptions \ref{asp:basic} and \ref{asp:ave-smth} hold. Consider \cref{alg:unf-sipm} with $\{\overline{m}^k\}_{k\ge0}$ updated using \cref{sipm-rm-mk-up} and input parameters $\{(\eta_k,\gamma_k,\mu_k)\}_{k\ge0}$ specified in \cref{etak-Bk-rm}. Let {$\kappa(K)$} be uniformly drawn from {$\{\lfloor K/2\rfloor,\ldots,K-1\}$}, and define
{
\begin{align}
M_{\rmrm}\doteq 6\bigg(\frac{\Delta(x^0) + \sigma^2}{s_{\eta}}  + \frac{2s_\eta L_\phi}{9} + 4\Big(\frac{4s_\eta}{3(3-s_\eta)} + \frac{3(L_1^2+L^2)s_\eta}{(3-s_\eta)^2} + \frac{3\sigma^2}{s_\eta(1-s_\eta/3)^2}\Big) \bigg),  \label{def:krm} 
\end{align}
where $\Delta(\cdot)$ and $L_\phi$ are defined in \cref{lwbd-fb} and \cref{def:Lphi-1}, respectively, $L_1$ and $\sigma$ are given in Assumption \ref{asp:basic}, and $L$ is given in Assumption \ref{asp:ave-smth}. Then, 
\begin{align}
&\E[\|\nabla \phi_{\mu_{\kappa(K)}}(x^{\kappa(K)}) + A^T\lambda^{\kappa(K)}\|_{x^{\kappa(K)}}^*] \le \mu_{\kappa(K)}\ \text{ with }\ \mu_{\kappa(K)}\le\epsilon/(1+\sqrt{\vartheta})\nonumber\\
&\forall K\ge\max\Big\{2\Big(\frac{1+\sqrt{\vartheta}}{\epsilon}\Big)^3,\Big(\frac{6M_{\mathrm{rm}}(1+\sqrt{\vartheta})}{\epsilon}\ln\Big(\frac{6M_{\mathrm{rm}}(1+\sqrt{\vartheta})}{\epsilon}\Big)\Big)^3,3\Big\}. \label{complexity-rm}
\end{align}
}
\end{theorem}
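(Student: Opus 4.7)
The plan is to specialize Theorem \ref{thm:converge-rate-ipmrm} to the hyperparameter schedule \cref{etak-Bk-rm}, verify its premises via \cref{lem:tech-eta-gma-rm}, bound each term on the right-hand side of \cref{ineq:ave-stat-rm} using elementary series estimates, and finally convert the telescoping-sum bound into a pointwise expectation bound via the uniform draw of $\kappa(K)$.

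First, I would confirm the hypotheses of Theorem \ref{thm:converge-rate-ipmrm}. From \cref{etak-Bk-rm} with $s_\eta\in(0,1)$, the sequences $\{\eta_k\}_{k\ge 0}\subset(0,s_\eta]$ and $\{\mu_k\}_{k\ge 0}\subset(0,1]$ are nonincreasing, and $\gamma_k=1/(k+1)^{2/3}>s_\eta/(3(k+1)^{2/3})=\eta_k$. Lemma \ref{lem:tech-eta-gma-rm} supplies $\alpha_k\ge(1-s_\eta/3)/(k+1)^{2/3}$ and $(1-\alpha_k)p_{k+1}\le(1-\alpha_k/2)p_k$ for the choice $p_k=(k+1)^{1/3}$, precisely the structural requirements of the theorem.

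Next, I would substitute into the right-hand side of \cref{ineq:ave-stat-rm}. Using $\eta_{K-1}^{-1}=3K^{2/3}/s_\eta$, $p_0=1$, $\eta_0=s_\eta/3$, I would split the sum into four pieces and bound them term by term: $\sum_k \eta_k^2=O(\sum_k (k+1)^{-4/3})$, a convergent series, is bounded by an absolute constant times $s_\eta^2$; $\sum_k \eta_k^2/(p_k\alpha_k)$, $\sum_k \eta_k^2 p_{k+1}$, and $\sum_k \gamma_k^2 p_{k+1}$ each reduce, via \cref{lem:tech-eta-gma-rm} and the bound $(k+2)^{1/3}\le 2^{1/3}(k+1)^{1/3}$, to a constant multiple of $\sum_{k=0}^{K-1}1/(k+1)\le 1+\ln K$. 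Tracking constants carefully (and replacing $1+\ln K$ by $2\ln K$ under $K\ge 3$), the resulting estimate takes the form
\begin{equation*}
\sum_{k=0}^{K-1}\E[\|\nabla\phi_{\mu_k}(x^k)+A^T\lambda^k\|_{x^k}^*]\le \frac{K^{2/3}}{s_\eta}\cdot \frac{M_{\rmrm}}{6}\cdot\ln K,
\end{equation*}
with $M_{\rmrm}$ as defined in \cref{def:krm}; matching the constants in $M_{\rmrm}$ with the constants produced by the four bounded sums is the bookkeeping step of the proof.

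Third, since $\kappa(K)$ is uniform on $\{\lfloor K/2\rfloor,\ldots,K-1\}$, whose cardinality is at least $K/2$, the expected stationary measure at $\kappa(K)$ is bounded above by $(2/K)\sum_{k=0}^{K-1}\E[\|\cdot\|]\le M_{\rmrm}\ln K/(3s_\eta K^{1/3})$. To guarantee this quantity is at most $\epsilon/(1+\sqrt{\vartheta})\le\mu_{\kappa(K)}$, I would invoke the standard inversion that $K^{1/3}/\ln K\ge A$ is implied by $K\ge (cA\ln A)^3$ for a suitable absolute constant $c$, producing the second term inside the max in \cref{complexity-rm}. Finally, to ensure $\mu_{\kappa(K)}\le\epsilon/(1+\sqrt{\vartheta})$, I would use $\kappa(K)\ge\lfloor K/2\rfloor$ and the definition of $\mu_k$ to obtain the sufficient condition $K\ge 2((1+\sqrt{\vartheta})/\epsilon)^3$, the first term in the max.

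The main obstacle is the constant bookkeeping in step two: all of the factors $(1-\eta_0)^{-2}$, $(1-s_\eta/3)^{-1}$, and $3(L_1^2+L^2)$, together with the two different kinds of sums (convergent and logarithmic), must collapse to the prescribed expression for $M_{\rmrm}$. The rest is routine algebra of the Lambert-W-type inversion that has already been carried out for SIPM-ME, SIPM-PM, and SIPM-EM in the preceding subsections.
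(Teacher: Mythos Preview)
Your proposal is correct and follows essentially the same route as the paper: verify the premises of Theorem~\ref{thm:converge-rate-ipmrm} via Lemma~\ref{lem:tech-eta-gma-rm}, substitute the schedule \cref{etak-Bk-rm} and $p_k=(k+1)^{1/3}$ into \cref{ineq:ave-stat-rm}, bound the resulting series (one convergent, the others logarithmic) via \cref{upbd:series-ka}, pass to $\kappa(K)$ by the uniform draw, and invert using Lemma~\ref{lem:rate-complexity}. One cosmetic slip: in your displayed estimate the factor $1/s_\eta$ should not appear outside $M_{\rmrm}$, since the $s_\eta$-dependence is already folded into each term of \cref{def:krm} (the paper factors out $3K^{2/3}$, not $K^{2/3}/s_\eta$); correcting this gives the paper's bound $\sum_k\E[\cdot]\le M_{\rmrm}K^{2/3}\ln K/2$ and hence $\E[\cdot]_{\kappa(K)}\le M_{\rmrm}K^{-1/3}\ln K$, which then matches the threshold in \cref{complexity-rm} exactly.
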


\vspace{-1em}

\begin{remark}\label{rmk:rm-rate}
{From Theorem \ref{cor:order-rm}, we see that SIPM-RM returns an $\epsilon$-SSP within $\widetilde{\mathcal{O}}(\epsilon^{-3})$ iterations. This iteration complexity bound matches the best-known results for stochastic unconstrained optimization under average smoothness condition \cite{cutkosky2019momentum,fang2018spider,li2021page}, up to a polylogarithmic factor.}
\end{remark}

\section{Numerical experiments}\label{sec:ne}
We now conduct numerical experiments to evaluate the performance of our proposed SIPMs. For SIPM-ME, we consider two versions: SIPM-ME$^+$ with increasing batch sizes and SIPM-ME$^1$ with a fixed batch size. We compare our SIPMs against a deterministic variant of \cref{alg:unf-sipm} with full-batch gradients (IPM-FG) and other popular methods on robust linear regression (\cref{subsec:rlr}), multi-task relationship learning (\cref{subsec:mrl}), and clustering data streams (\cref{subsec:cds}).

We evaluate the approximate solutions found by our SIPMs using two measures: relative objective value: $f(x^k)/f(x^0)$; and  relative estimated stationary error: $\|m^k + A^T\lambda^k\|_{x^k}^*/\|m^0 + A^T\lambda^0\|_{x^0}^*$. All experiments are carried out using Matlab 2024b on a standard PC with 3.20 GHz AMD R7 5800H microprocessor with 16GB of memory. The code to reproduce our numerical results is available at \url{https://github.com/ChuanH6/SIPM}.

\begin{table}[H]
\centering
\caption{\small The relative objective value and relative estimated stationary error for all methods applied to solve problem \cref{rbst-socp}.}
\label{table:rbst}
\resizebox{\textwidth}{!}{
\begin{tabular}{l|cc|cc|cc|cc}
\hline
\multirow{3}{*}{} &   \multicolumn{4}{c|}{wine-quality} &  \multicolumn{4}{c}{energy-efficiency} 
\\
&   \multicolumn{2}{c}{$(d,p)=(10,2000)$} &  \multicolumn{2}{c|}{$(d,p)=(20,4000)$} &  \multicolumn{2}{c}{$(d,p)=(10,5000)$} &  \multicolumn{2}{c}{$(d,p)=(20,10000)$}
\\ 
& objective & \multicolumn{1}{c}{stationary} & objective & stationary & objective & \multicolumn{1}{c}{stationary} & objective & stationary  \\ \hline
SIPM-ME$^1$  & 0.2860  &1.550e-2 & 0.3170 & 2.500e-2 & 0.8243 & 3.303e-3 & 0.8406  & 6.500e-3 \\
SIPM-ME$^+$  & 0.2376  &3.611e-3 & 0.2220 & 3.813e-3 & 0.8128 & 5.982e-5 & 0.8126  & 5.493e-5\\
SIPM-PM  & 0.2307  &2.002e-3  & 0.2372 & 3.703e-3 & 0.8131 & 8.123e-5 &  0.8133 & 8.051e-5 \\ 
SIPM-EM  & 0.2307 &2.002e-3 & 0.2322 & 3.702e-3 & 0.8128 & 6.954e-6&  0.8127 & 7.527e-6\\
SIPM-RM  & 0.2306  &1.903e-3  & 0.2302 & 4.303e-3 & 0.8128 & 2.167e-6 &  0.8127  & 2.444e-6\\
IPM-FG  & 0.2359  &4.652e-3  & 0.2350 & 4.503e-3 & 0.8128 & 2.974e-6 &  0.8127  & 1.453e-5\\
SALM-RM  & 0.2583  & -- & 0.2570& -- & 0.8532 & -- &  0.8241  & -- \\ \hline
\end{tabular}}
\end{table}
\begin{figure}[ht]
\centering
\begin{minipage}[b]{0.24\linewidth}
\centering
\includegraphics[width=\linewidth]{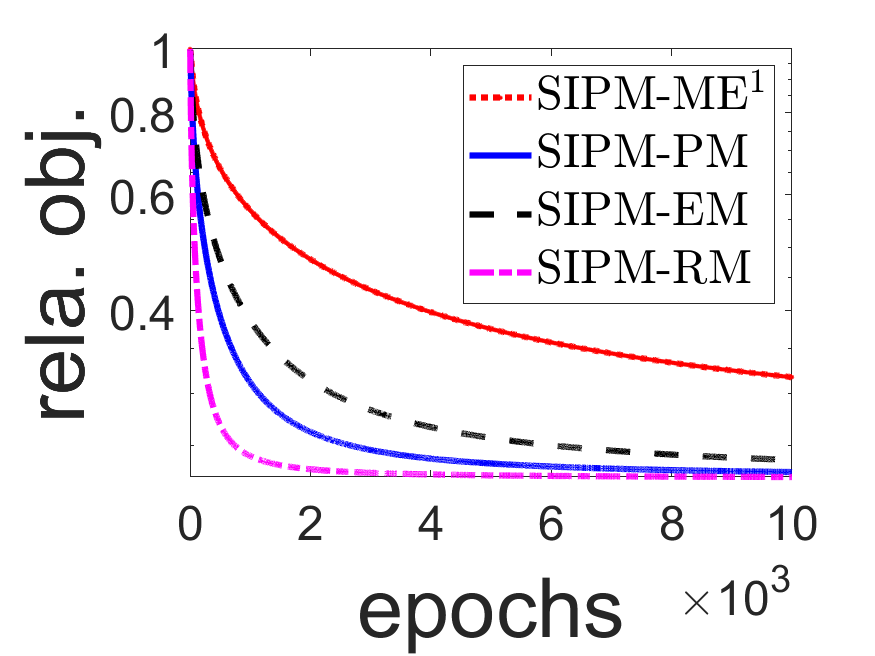}
\end{minipage}
\hfill
\begin{minipage}[b]{0.24\linewidth}
\centering
 \hfill\includegraphics[width=\linewidth]{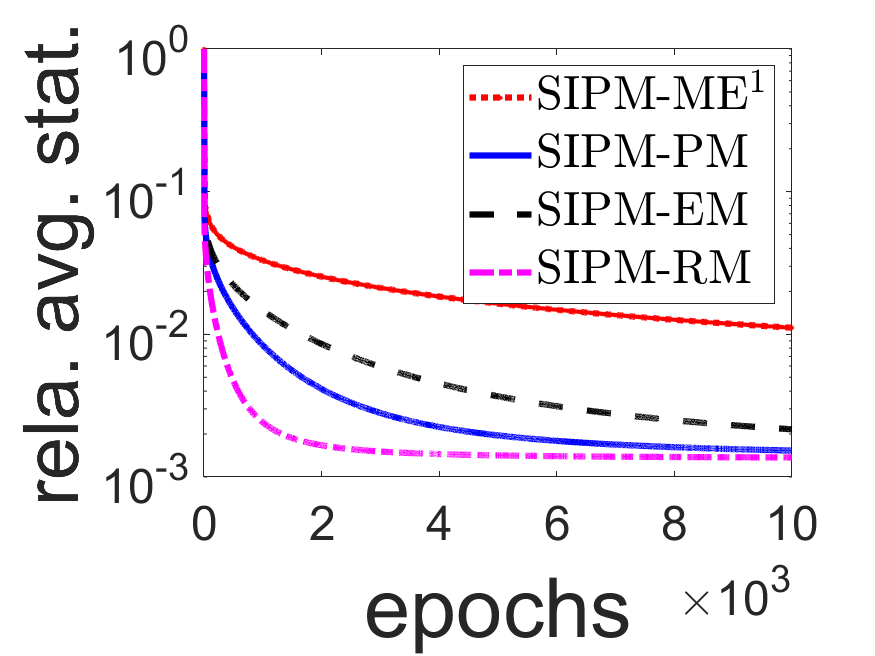}
\end{minipage}
\begin{minipage}[b]{0.24\linewidth}
\centering
\includegraphics[width=\linewidth]{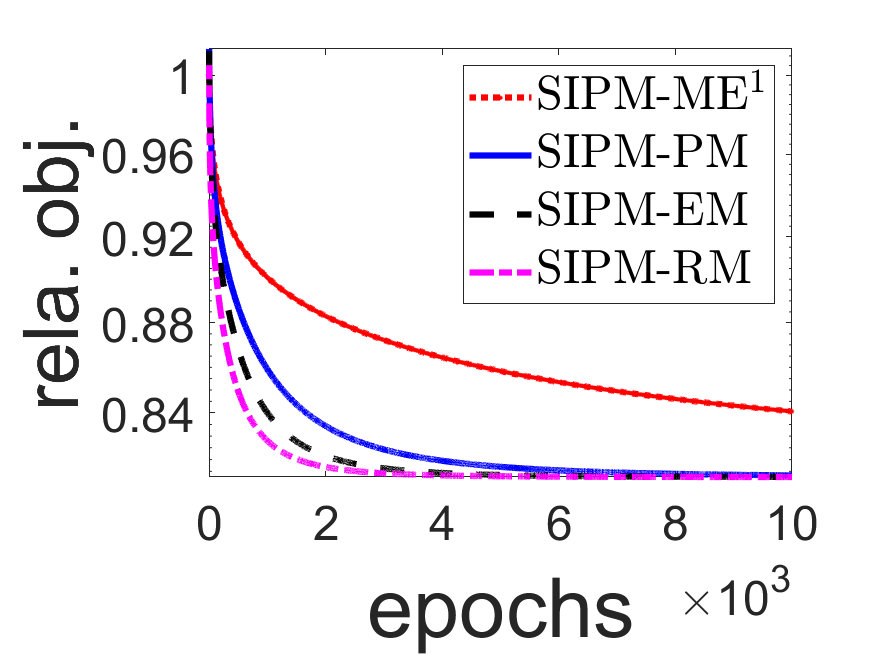}
\end{minipage}
\hfill
\begin{minipage}[b]{0.24\linewidth}
\centering
\includegraphics[width=\linewidth]{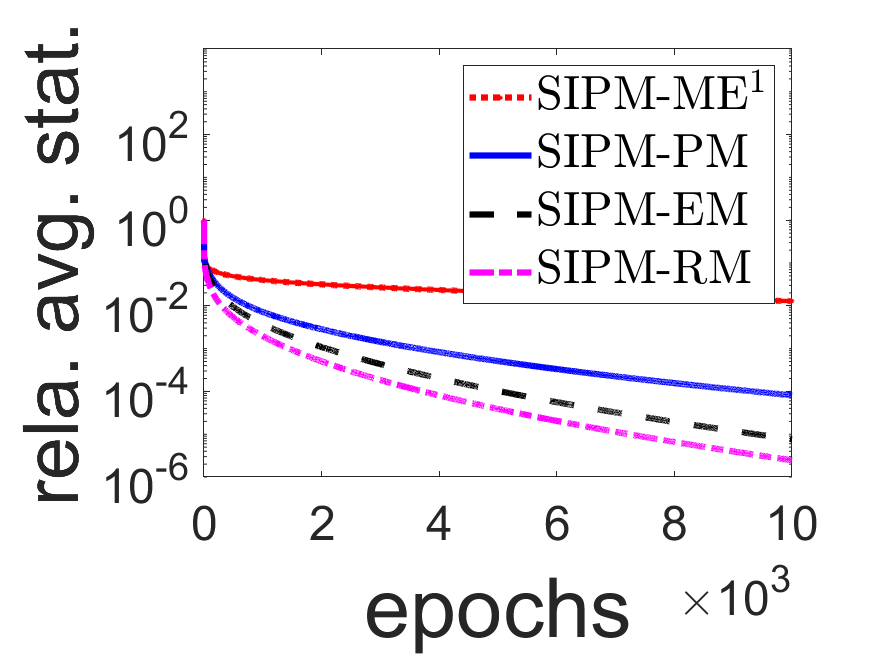}
\end{minipage}
\caption{\small Convergence behavior of the relative objective value and average relative stationary error for each epoch. The first two and last two plots correspond to the `wine-quality' and `energy-efficiency' datasets, respectively.}
\label{fig:socp}
\end{figure}

\subsection{Robust linear regression with chance constraints}\label{subsec:rlr}
In this subsection, we consider the second-order cone constrained regression problem in \cref{rbst-socp}, which is derived from robust regression with chance constraints. We let $\phi(t)=t^2/(1+t^2)$ as a robust modification of the quadratic loss \cite{carmon2017convex}. We consider two {typical regression} datasets, namely, `wine-quality' and `energy-efficiency', from the UCI repository.\footnote{see \url{archive.ics.uci.edu/datasets}} 
Following \cite{shivaswamy2006second}, we simulate missing values by randomly deleting 25\% of the features in 50\% of the training samples, and filling the missing values using linear regression on the remaining features.

We apply our SIPMs, IPM-FG, and a stochastic augmented Lagrangian method with recursive momentum in \cite{alacaoglu2024complexity,lu2024variance} (SALM-RM) to solve \cref{rbst-socp}. {For SIPMs, we set the barrier function as $B(u,t)=-\ln(t^2-\|u\|^2)$, associated with the second-order cone $\mathbb{Q}^{d+1}\doteq\{(u,t)\in\R^d\times\R_+:\|u\|\le t\}$.} For SIPM-ME$^1$, SIPM-PM, SIPM-EM, SIPM-RM, and SALM-RM, we set the maximum number of epochs as $10,000$, {and the batch size as 200 and 500 for the `wine-quality' and `energy-efficiency' datasets, respectively. For SIPM-ME$^+$, we initialize the batch size as 1 and increase it by 1 per iteration.} For a fair comparison, we set the maximum number of iterations for SIPM-ME$^+$ and IPM-FG such that the total number of data points used during training equals that of the other methods in $10,000$ epochs. For all methods, we set the initial point $(w^0,v^0,\theta^0)$ to $(0,1,1)$. {We set the other hyperparameters---including step sizes, momentum parameters, and barrier parameters---as diminishing sequences of the form $\{(k+1)^{-\alpha}\}_{k\ge0}$, with the exponent $\alpha$ tuned via grid search to best suit each method in terms of computational performance.}

Comparing the relative objective values and the relative estimated stationary errors in \cref{table:rbst}, we see that our SIPM-ME$^+$, SIPM-PM, SIPM-EM, and SIPM-RM yield solutions of similar quality to IPM-FG. In addition, SIPM-ME$^1$ converges slowly and returns suboptimal solutions, which corroborates the theoretical results that incorporating momentum facilitates gradient estimation and improves solution quality. We also observe that the solution quality of SALM-RM is generally worse than that of our SIPMs, except SIPM-ME$^1$. From \cref{fig:socp}, we observe that SIPM-ME$^1$ converges much slower than the other three variants, and SIPM-RM is slightly faster than SIPM-PM and SIPM-EM, which corroborates our established {iteration complexity} for these methods.

\subsection{Multi-task relationship learning}\label{subsec:mrl}
In this subsection, we consider the problem of multi-task relationship learning in \cref{multi-task}, where $a_{ij}=(p_{ij},q_{ij})\in\R^d\times\R$ for all $1\le i\le p$ and $1\le j\le m_i$, $\ell(w_i,a_{ij})=\phi(w_i^Tp_{ij} - q_{ij})$, and $\phi(t)=t^2/(1+t^2)$. We consider five {typical regression} datasets from the UCI repository: `wine-quality-red', `wine-quality-white', `energy-efficiency', `air-quality', and `abalone'. 
In our multi-task learning, we consider problems with five tasks and ten tasks, where, in the former case, we treat a subset of each of the five datasets as a separate task, and in the latter case, we take two subsets from each dataset, treating each subset as a separate task.

We apply our SIPMs, IPM-FG, and the alternating minimization method in \cite{argyriou2008convex} (AM) to solve \cref{multi-task}. {For SIPMs, we choose the barrier function to be $B(\Sigma)=-\ln(\mathrm{det}(\Sigma))$, associated with the positive semidefinite cone $\mathbb{S}_+^{p}\doteq\{\Sigma\succeq0\}$.} For SIPM-ME$^1$, SIPM-PM, SIPM-EM, and SIPM-RM, we {choose the batch size as $200$ and $500$,} and set the maximum number of epochs as {$250$. For SIPM-ME$^+$, we initialize the batch size as 10 and increase it by 10 per iteration.}
For a fair comparison, we set the maximum number of iterations for SIPM-ME$^+$, IPM-FG and AM such that the total number of data points used during training equals that {of the other methods in $250$ epochs}. For all methods, we set the initial point $W^0$ as the all-zero matrix and $\Sigma^0$ as a diagonal matrix with diagonal elements equal to $1/p$. {We set the other hyperparameters---including step sizes, momentum parameters, and barrier parameters---according to the strategy described in Section \ref{subsec:rlr}.}

\begin{figure}[ht]
\begin{minipage}[t]{\linewidth}
\raggedright 
\includegraphics[width=0.88\linewidth]{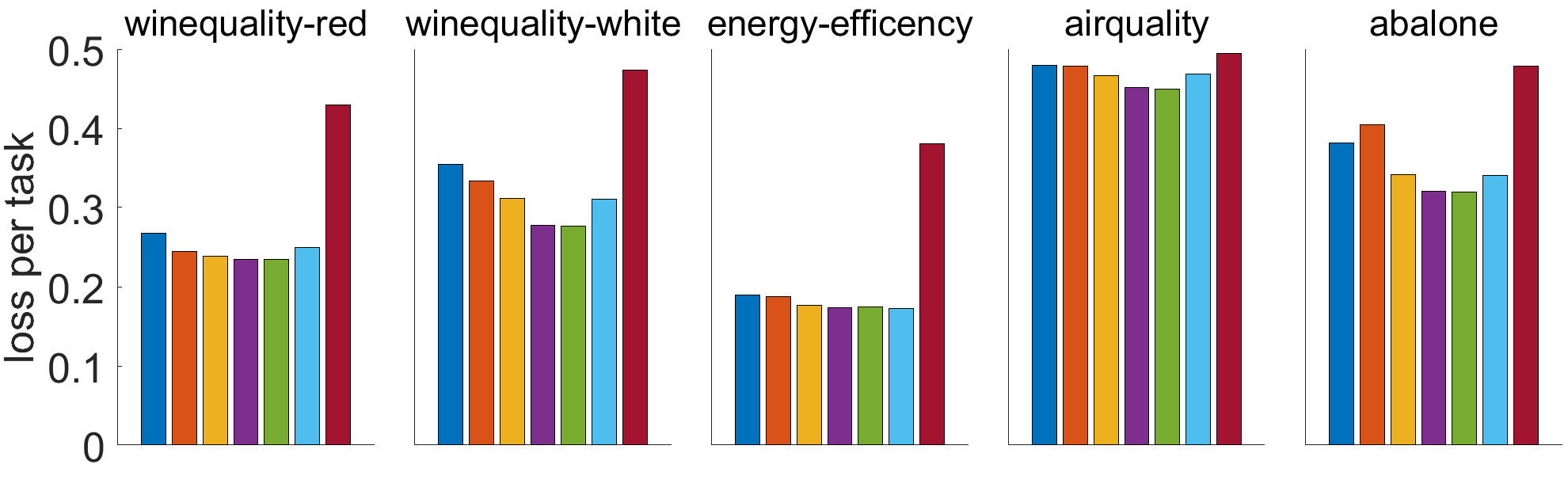}
\par\vspace{0.2em}
\end{minipage}

\begin{minipage}[t]{\linewidth}
\centering
\includegraphics[width=1\linewidth]{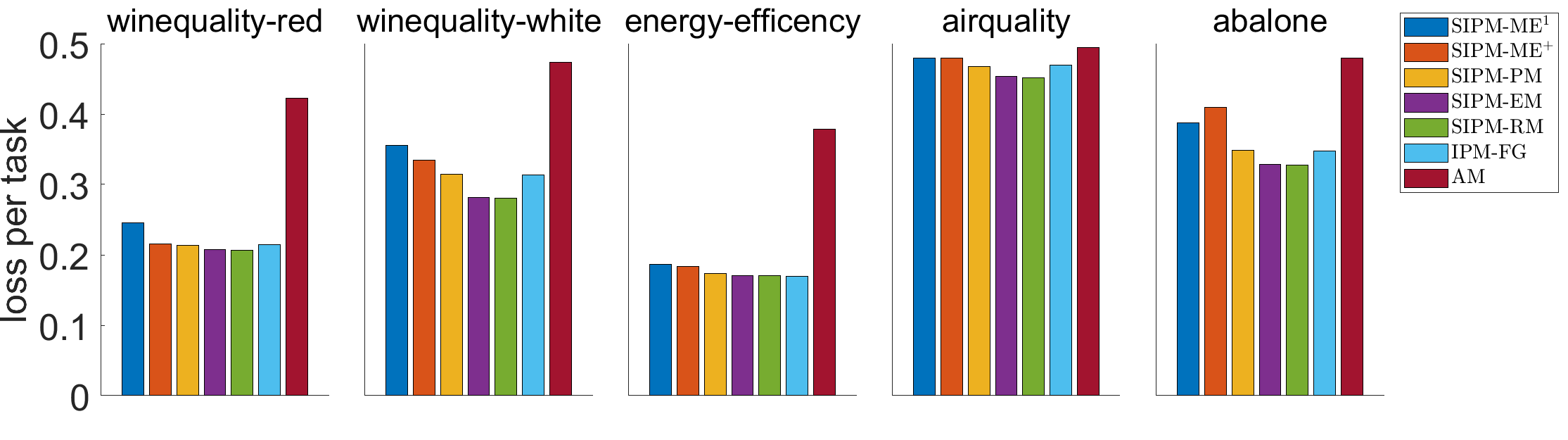}
\end{minipage}

\caption{\small Loss per task for the training ({top}) and validation ({bottom}).}
\label{fig:multi-gen}
\end{figure}

\begin{table}[h]
\centering
\caption{\small The relative objective value and relative estimated stationary error for all methods applied to solve problem \cref{multi-task}.}
\label{table:mtl}
\resizebox{\textwidth}{!}{
\begin{tabular}{l|cc|cc|cc|cc}
\hline
&   \multicolumn{4}{c|}{five tasks} &  \multicolumn{4}{c}{ten tasks}
\\
&   \multicolumn{2}{c}{$m=200$} &  \multicolumn{2}{c|}{$m=500$} &  \multicolumn{2}{c}{$m=200$} &  \multicolumn{2}{c}{$m=500$}
\\ 
& objective & \multicolumn{1}{c}{stationary} & objective & stationary & objective & \multicolumn{1}{c}{stationary} & objective & stationary  \\ \hline
SIPM-ME$^1$  & 0.3260  &2.985e-2 & 0.3137  &3.002e-2 & 0.3985 & 8.985e-3 & 0.4245  & 3.587e-3 \\ 
SIPM-ME$^+$  & 0.3254  &8.763e-3 & 0.2854  &8.733e-3 & 0.2879 & 4.803e-3 & 0.2987  & 9.564e-3 \\
SIPM-PM  & 0.3215  & 8.654e-3  &0.2868  & 1.383e-2 & 0.2875 & 5.123e-3 &  0.2884 & 1.251e-2 \\ 
SIPM-EM  & 0.3203 & 8.685e-3 & 0.2850 & 8.254e-3 & 0.2865 & 4.954e-3&  0.2764 & 8.527e-3\\ 
SIPM-RM  & 0.3198  & 8.534e-3   & 0.2843  & 8.234e-3 & 0.2765 & 4.547e-3 &  0.2774   &8.436e-3\\ 
IPM-FG  &  0.3243   & 8.778e-3  & 0.2975 & 8.447e-3  &  0.2909 &  4.987e-3 &  0.2894 & 8.554e-3 \\ 
AM  &  0.3535  & -- &  0.3743  & -- & 0.4043  & -- &  0.4253 & --  \\
\hline
\end{tabular}}
\end{table}
\begin{figure}[ht]
\centering
\begin{minipage}[b]{0.24\linewidth}
\centering
\includegraphics[width=\linewidth]{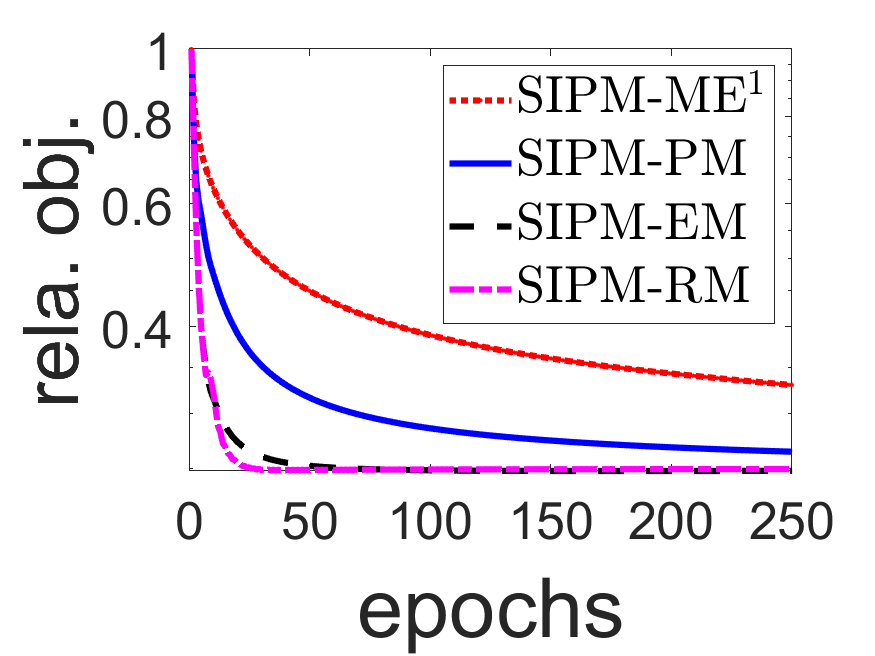}
\end{minipage}
\hfill
\begin{minipage}[b]{0.24\linewidth}
\centering
 \hfill\includegraphics[width=\linewidth]{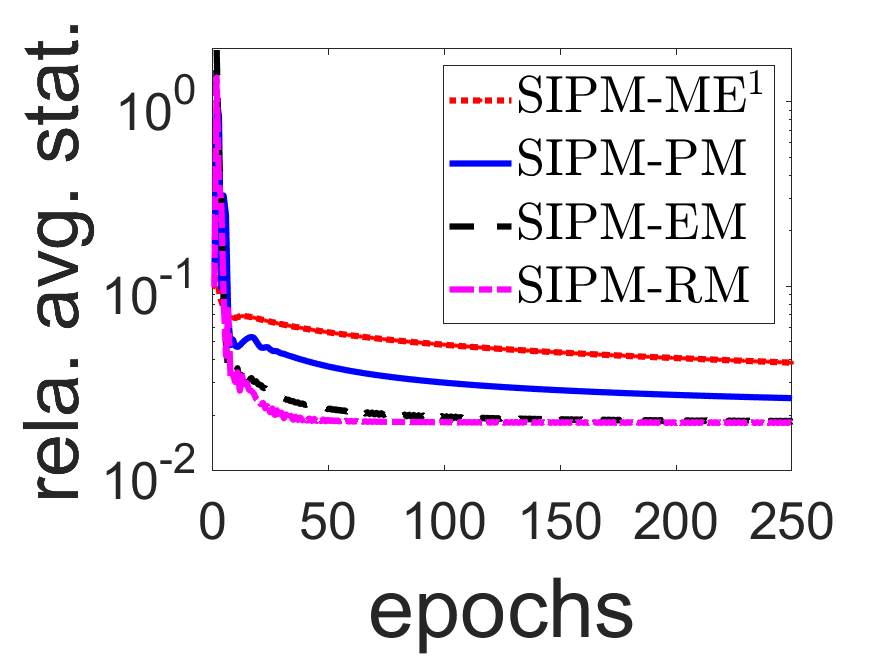}
\end{minipage}
\begin{minipage}[b]{0.24\linewidth}
\centering
\includegraphics[width=\linewidth]{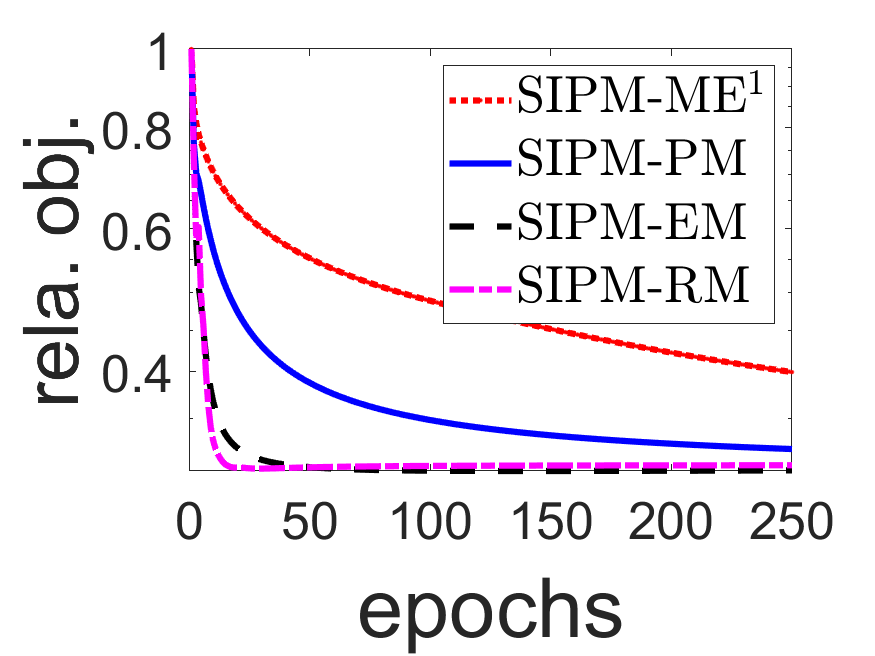}
\end{minipage}
\hfill
\begin{minipage}[b]{0.24\linewidth}
\centering
\includegraphics[width=\linewidth]{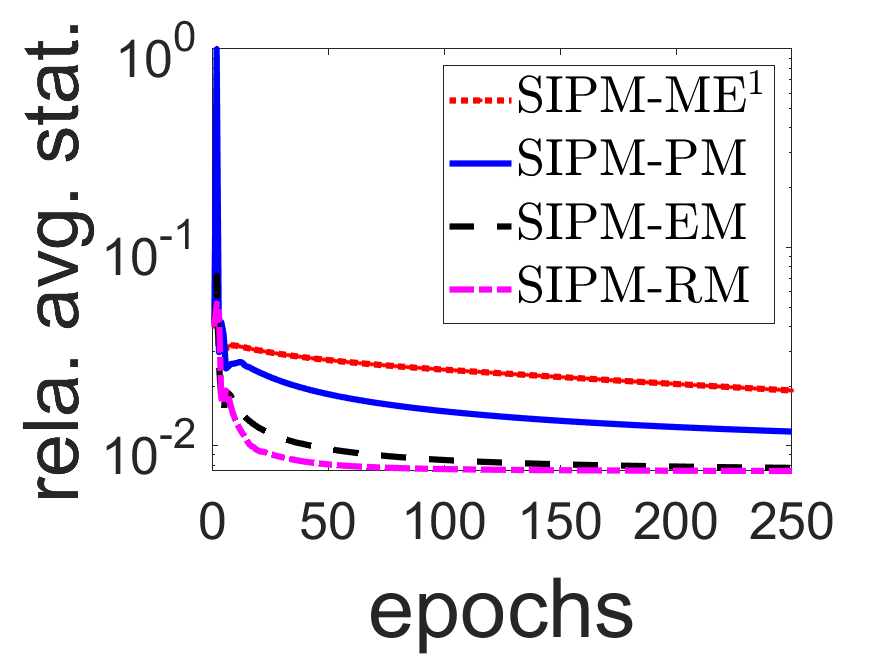}
\end{minipage}
\caption{\small Convergence behavior of the relative objective value and average relative stationary error for each epoch. The first two figures correspond to the problem with five tasks, and the last two correspond to the problem with ten tasks.}
\label{fig:multi-sipms}
\end{figure}

From \cref{fig:multi-gen}, our SIPMs and IPM-FG outperform AM in loss for each task on both training and validation datasets, improving multi-task learning performance. By comparing the relative objective values and the relative estimated stationary errors in \cref{table:mtl}, we observe that our SIPM-ME$^+$, SIPM-PM, SIPM-EM, and SIPM-RM yield {solutions} of similar quality compared to IPM-FG. In addition, SIPM-ME$^1$ converges slowly and returns suboptimal solutions, which corroborates the theoretical results that incorporating momentum aids gradient estimation and improves solution quality. We also observe that the solution quality, in terms of relative objective value, of AM is generally worse than that of our SIPM variants. From \cref{fig:multi-sipms}, we observe that SIPM-ME$^1$ converges more slowly than the other three variants, and SIPM-RM is faster than SIPM-PM and SIPM-EM, which corroborates our established {iteration complexity} for these methods.


\subsection{Clustering data streams}\label{subsec:cds}
In this subsection, we consider the problem of clustering data streams \cite{bidaurrazaga2021k}, which aims at assigning $d$ points to $k$ clusters, with each point having $p$ data observations that arrive continuously. Applying the semidefinite relaxation from \cite{peng2007approximating} to this clustering problem results in:
\begin{align}\label{pb:ceds}
\min_{W\in\R^{d\times d}} \frac{1}{p}\sum_{i=1}^p\langle A_i,W\rangle + \tau\sum_{i=1}^d\ln(\gamma +\lambda_i(W))\ \ \mathrm{s.t.}\ \ W\in\mathbb{S}^{d}_+,\ \ We_d = e_d,\ \ \langle I_d,W\rangle=k,
\end{align}
where $\{A_i\}_{i=1}^p$ are computed from data streams, $\tau\sum_{i=1}^d\ln(\gamma +\lambda_i(W))$ is a nonconvex regularizer that imposes low rankness, with {$\lambda_i(W)$ being the $i$th largest eigenvalue of $W$, and} $\tau$ and $\gamma$ being tuning parameters \cite{lu2014generalized}, and $e_d$ and $I_d$ denote the $d$-dimensional  all-one vector and the $d\times d$ identity matrix, respectively. We consider two {typical classification} datasets, namely, `spam-base' and `cover-type', from the UCI repository. 
To simulate stream scenarios, we apply $(1+\varepsilon)$-drifts adapted from \cite{bidaurrazaga2021k} to each dataset, where $\varepsilon$ is randomly drawn from a standard Gaussian distribution.
\begin{figure}[h]
\centering
\begin{minipage}[b]{0.24\linewidth}
\centering
\includegraphics[width=\linewidth]{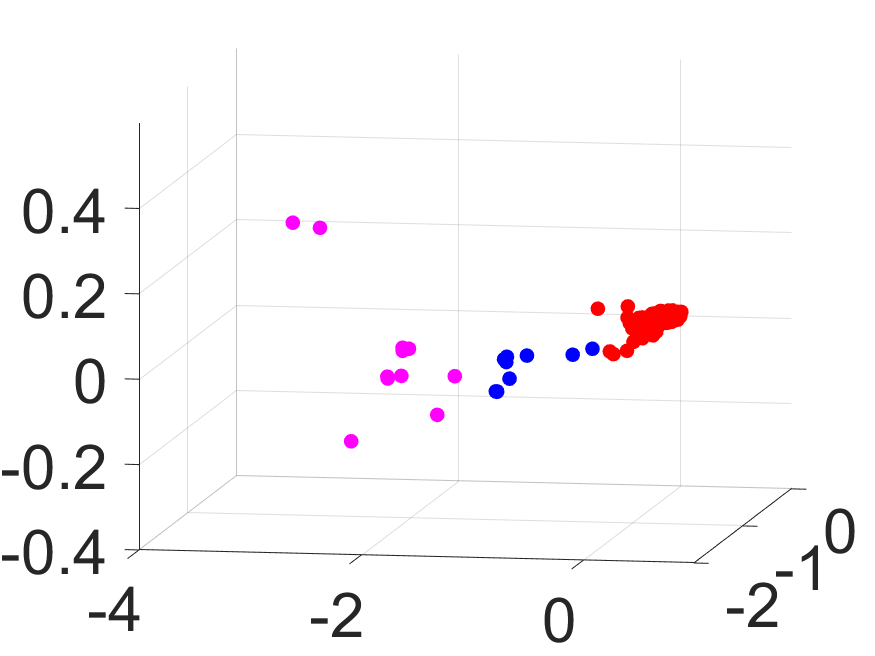}
\end{minipage}
\hfill
\begin{minipage}[b]{0.24\linewidth}
\centering
\includegraphics[width=\linewidth]{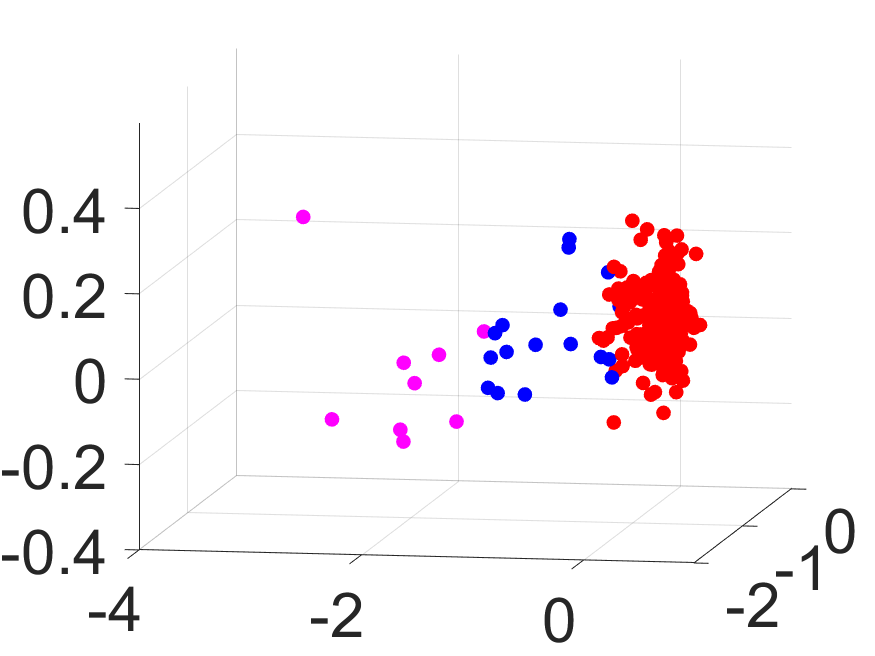}
\end{minipage}
\hfill
\begin{minipage}[b]{0.24\linewidth}
\centering
\includegraphics[width=\linewidth]{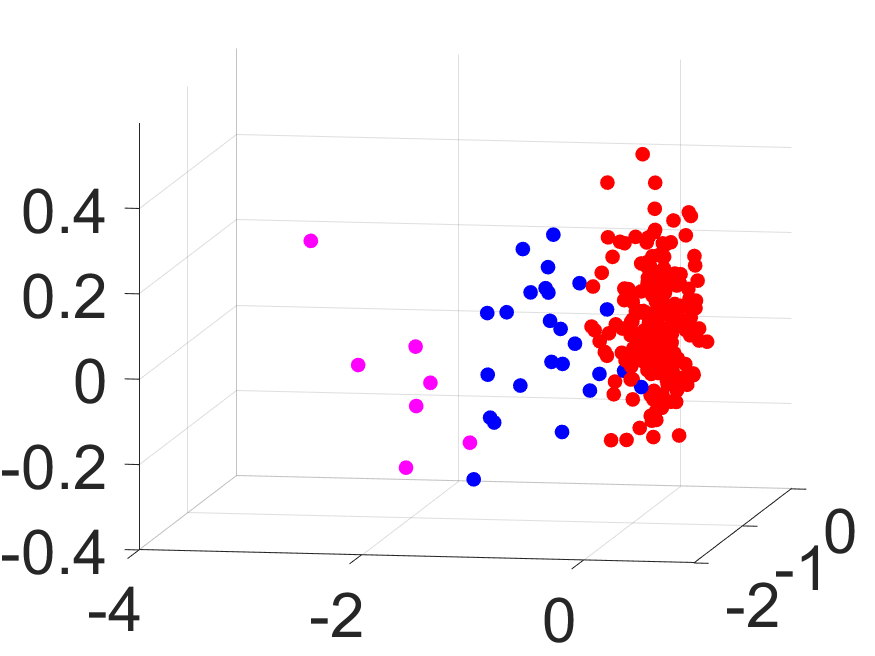}
 \end{minipage}
\hfill
\begin{minipage}[b]{0.24\linewidth}
\centering
\includegraphics[width=\linewidth]{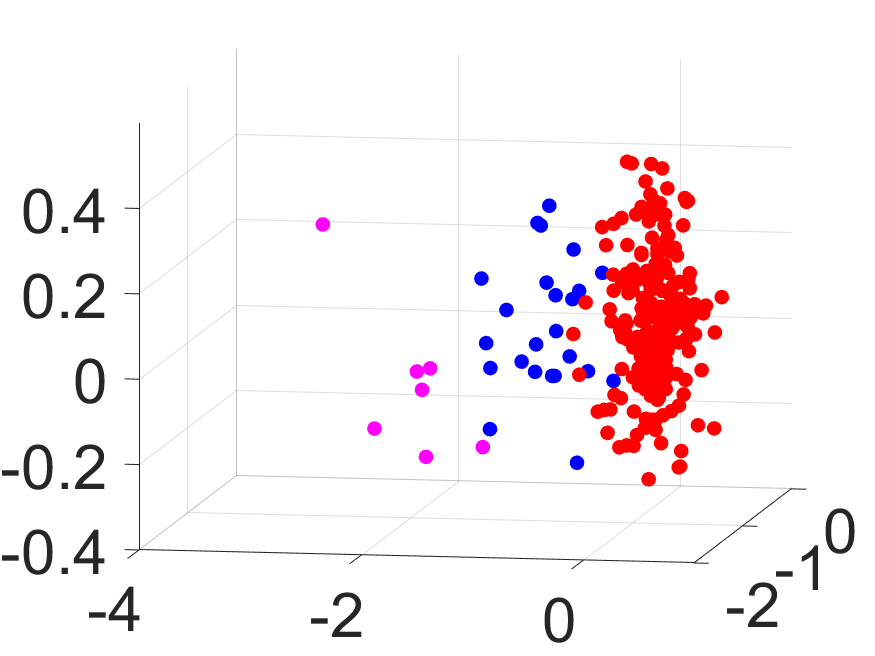}
\end{minipage}\\
\begin{minipage}[b]{0.24\linewidth}
\centering
\includegraphics[width=\linewidth]{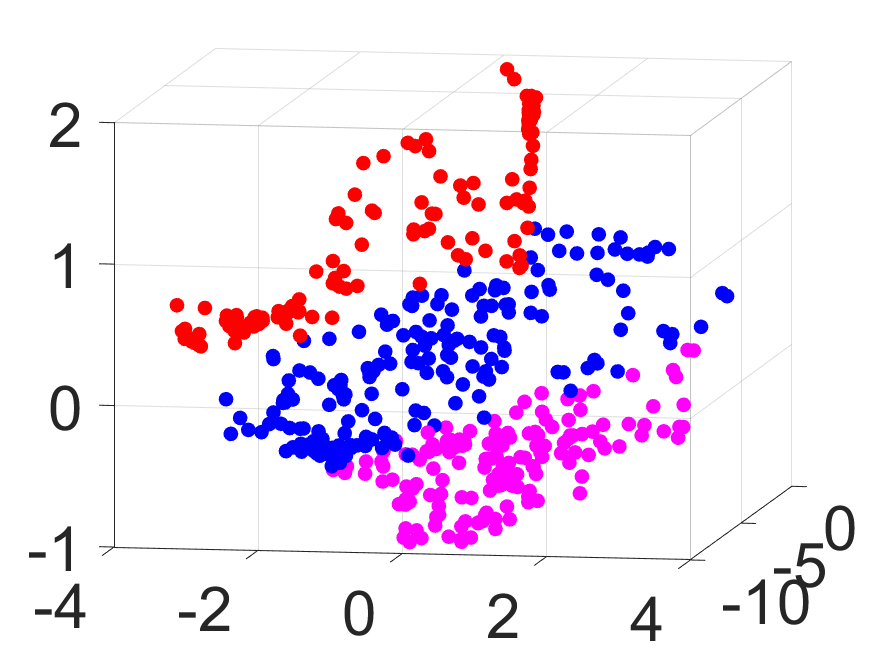}
\end{minipage}
\hfill
\begin{minipage}[b]{0.24\linewidth}
\centering
\includegraphics[width=\linewidth]{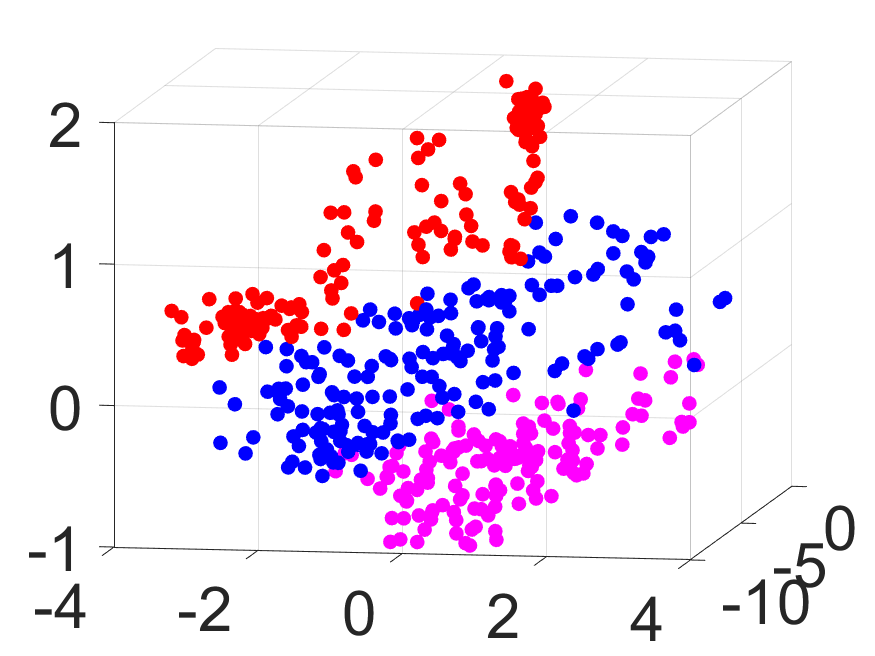}
\end{minipage}
\hfill
\begin{minipage}[b]{0.24\linewidth}
\centering
\includegraphics[width=\linewidth]{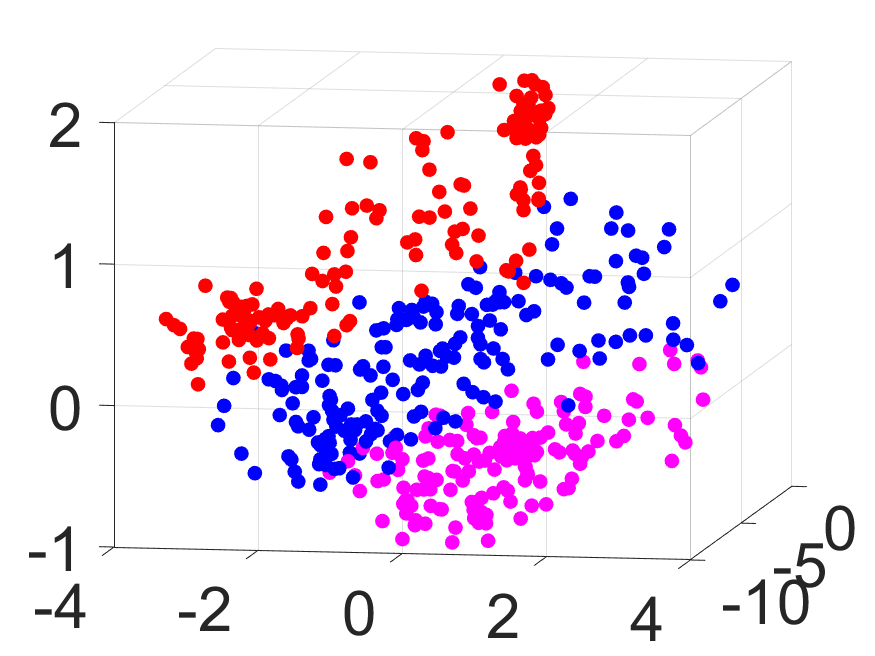}
\end{minipage}
\hfill
\begin{minipage}[b]{0.24\linewidth}
\centering
\includegraphics[width=\linewidth]{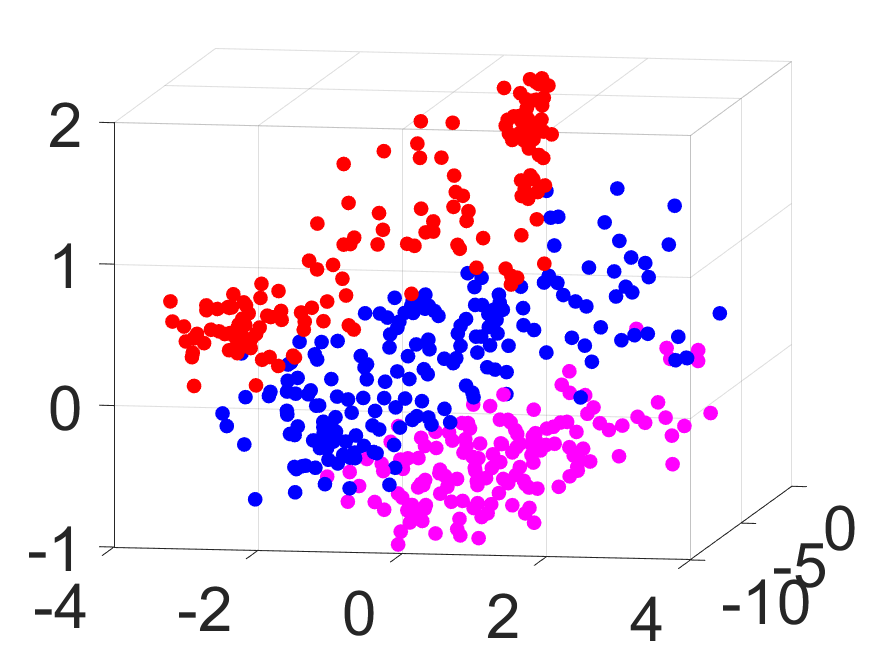}
\end{minipage}
\caption{\small Visualization of the clustering results obtained by solving \cref{pb:ceds} using our SIPMs at the 1st, 333rd, 666th, and 1000th data observations in the stream (from left to right). The first and second rows display the clustering results for the `spam-base' and `cover-type' datasets, respectively.}
\label{fig:sdp}
\end{figure}
\begin{table}[H]
\centering
\caption{\small The relative objective value and relative estimated stationary error for all methods applied to solve problem \cref{pb:ceds}.}
\label{table:cluster}
\resizebox{\textwidth}{!}{
\begin{tabular}{l|cc|cc|cc|cc}
\hline
\multirow{3}{*}{} &   \multicolumn{4}{c|}{spam-base} &  \multicolumn{4}{c}{cover-type} 
\\
&   \multicolumn{2}{c}{$(d,p)=(100,100)$} &  \multicolumn{2}{c|}{$(d,p)=(500,100)$} &  \multicolumn{2}{c}{$(d,p)=(100,500)$} &  \multicolumn{2}{c}{$(d,p)=(500,500)$}
\\ 
& objective & \multicolumn{1}{c}{stationary} & objective & stationary & objective & \multicolumn{1}{c}{stationary} & objective & stationary  \\ \hline
SIPM-ME$^1$  & -19.14  &3.108e-1 & -17.21 & 1.544e-1 &  0.2909 & 6.822e-2 & 0.2302  & 3.996e-2\\ 
SIPM-ME$^+$  &  -19.15   &2.950e-2  & -17.24 & 3.789e-2 &  0.2908 & 6.165e-3  & 0.2299 & 9.096e-3\\
SIPM-PM  &  -19.15   & 2.952e-2  & -17.23 & 3.790e-2 & 0.2908 & 6.165e-3 & 0.2297 & 1.108e-2\\ 
SIPM-EM  &  -19.15 & 2.948e-2 & -17.23 & 3.611e-2 & 0.2908 & 6.163e-3 & 0.2297  & 9.179e-3\\ 
SIPM-RM  &  -19.15  & 2.947e-2  & -17.24 & 3.609e-2& 0.2908 & 6.163e-3 & 0.2296  & 9.096e-3\\
IPM-FG  &  -19.15 & 2.950e-2  & -17.24 & 3.623e-2  & 0.2909  & 6.235e-3  & 0.2297 & 9.026e-3 \\
\hline
\end{tabular}}
\end{table}

\begin{figure}[ht]
\centering
\begin{minipage}[b]{0.24\linewidth}
\centering
\includegraphics[width=\linewidth]{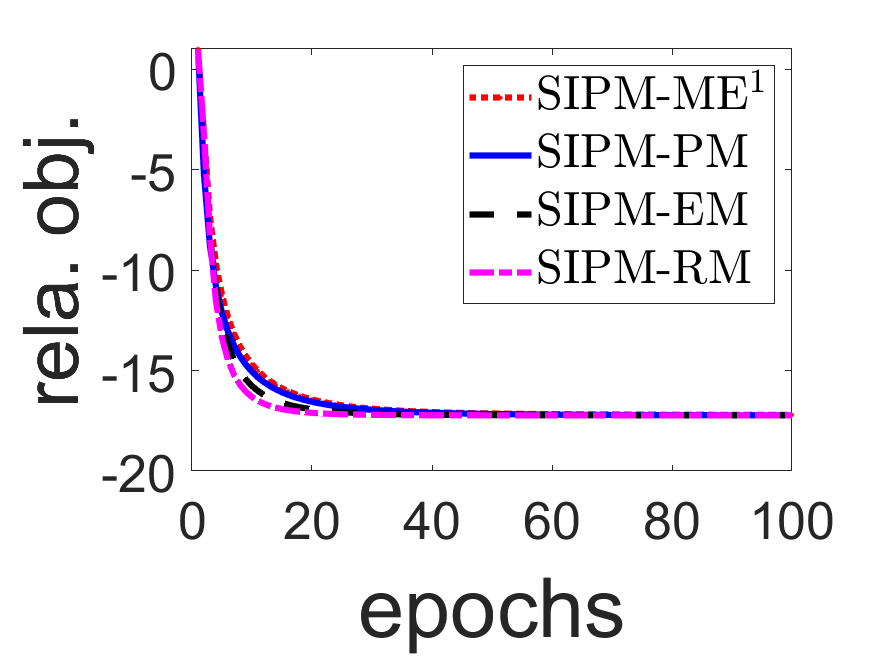}
\end{minipage}
\begin{minipage}[b]{0.24\linewidth}
\centering
\includegraphics[width=\linewidth]{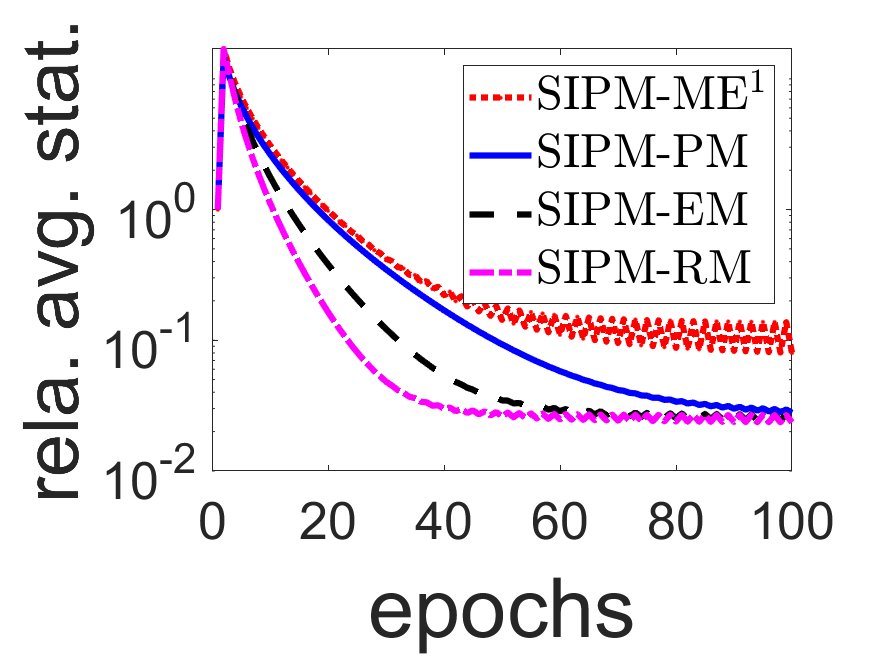}
\end{minipage}
\hfill
\begin{minipage}[b]{0.24\linewidth}
\centering
\includegraphics[width=\linewidth]{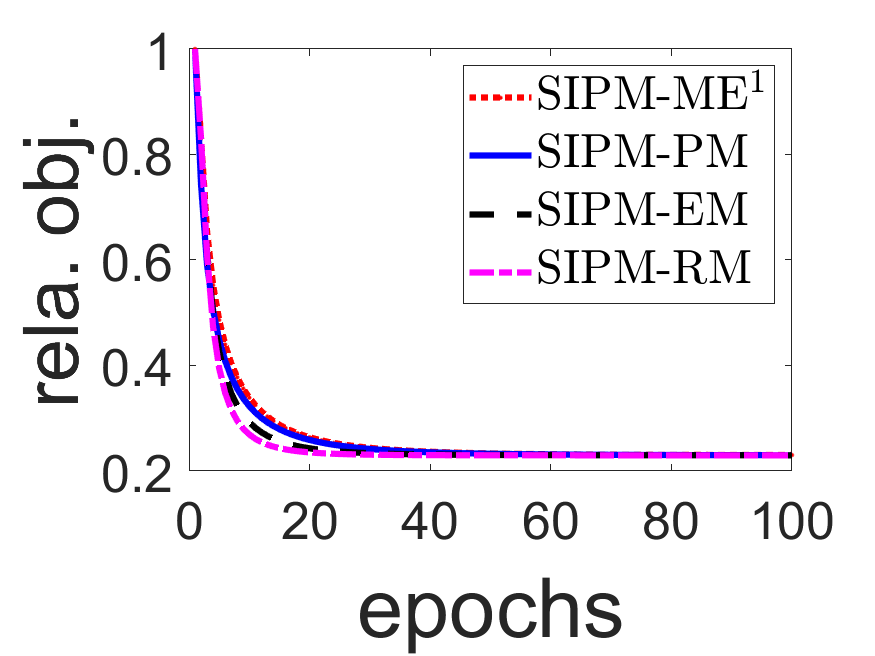}
\end{minipage}
\begin{minipage}[b]{0.24\linewidth}
\centering
\includegraphics[width=\linewidth]{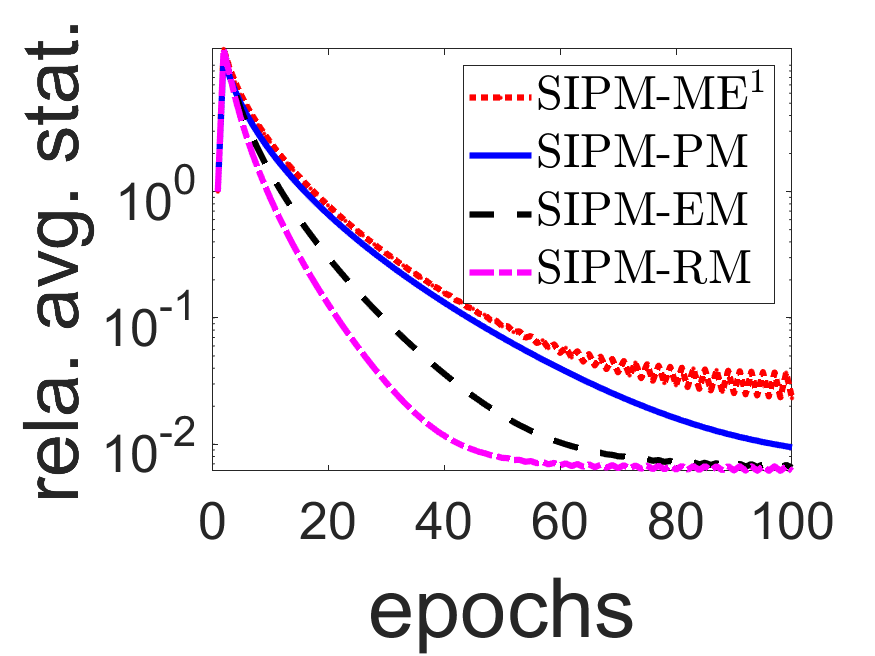}
\end{minipage}
\caption{\small Convergence behavior of the relative objective value and average relative stationary error for each epoch. The first two and last two plots correspond to the `spam-base' and `cover-type' datasets, respectively.}
\label{fig:sdp-error}
\end{figure}

We apply our SIPMs and IPM-FG to solve \cref{pb:ceds}. {For SIPMs, we set the barrier function as $B(\Sigma)=-\ln(\mathrm{det}(\Sigma))$, associated with the positive semidefinite cone $\mathbb{S}_+^{p}\doteq\{\Sigma\succeq0\}$.} For SIPM-ME$^1$, SIPM-PM, SIPM-EM, and SIPM-RM, we {set the batch size as 10 and 50 for the case when $p$ is 100 and 500, respectively, and} set the maximum number of epochs as $100$. {For SIPM-ME$^+$, we initialize the batch size as 1 and increase it by 1 per iteration.} For a fair comparison, we set the maximum number of iterations for SIPM-ME$^+$ and IPM-FG such that the total number of data points used during training equals that of the other methods in $100$ epochs. For all methods, we set the initial point $W^0$ with all diagonal elements equal to $k/d$ and all other elements equal to $(d-k)/(d(d-1))$. {We set the other hyperparameters---including step sizes, momentum parameters, and barrier parameters---according to the strategy described in Section \ref{subsec:rlr}.}

From \cref{fig:sdp}, we observe that the solutions obtained using our SIPMs on \cref{pb:ceds} effectively cluster the data streams. By comparing the relative objective values and the relative estimated stationary errors in \cref{table:cluster}, we observe that our proposed SIPM-ME$^+$, SIPM-PM, SIPM-EM, and SIPM-RM yield solutions of similar quality to the deterministic variant IPM-FG. In addition, SIPM-ME$^1$ converges slowly and returns suboptimal solutions, which corroborates the theoretical results that incorporating momentum greatly facilitates gradient estimation, thereby improving solution quality. From \cref{fig:sdp-error}, we see that SIPM-ME$^1$ converges much more slowly than the other three variants, while SIPM-RM is slightly faster than SIPM-PM and SIPM-EM. This observation corroborates the {iteration complexity} we established for these methods.


\section{Proof of the main results}

In this section, we provide proofs of our main results presented in \cref{sec:not-pre,sec:sasa}, which are particularly, \cref{lem:apr-stat,lem:int-frame,lem:est-error-me,lem:pm-estimate,lem:feas-em,lem:em-estimate,lem:tech-ap-use,lem:rm-var-err,lem:tech-eta-gma-rm}, and \cref{thm:stat-ave-me,thm:converge-rate-ipmpm,thm:em-stat-ave,thm:converge-rate-ipmrm,cor:order-me,cor:order-pm,cor:order-em,cor:order-rm}.

We start with the next lemma regarding the properties of the $\vartheta$-LHSC barrier function.

\begin{lemma}\label{lem:tech-barrier}
Let $x\in\mathrm{int}\mathcal{K}$ and $s_\eta\in(0,1)$ be given. Then the following statements hold for the $\vartheta$-LHSC barrier function $B$.
\begin{enumerate}[{\rm (i)}]
\item $(\|\nabla B(x)\|_x^*)^2=-x^T\nabla B(x) = \|x\|_x^2 = \vartheta$.
\item $\{y:\|y-x\|_x<1\}\subset\mathrm{int}\gK$ and $\{s:\|s+\nabla B(x)\|_{x}^*\le 1\}\subseteq \gK^*$.
\item For any $y$ satisfying $\|y-x\|_x<1$, the relation
$(1-\|y-x\|_x)\|v\|_x^*\le \|v\|_y^* \le (1-\|y-x\|_x)^{-1}\|v\|_x^*$ holds for all $v\in\R^n$.
\item $\|\nabla B(y) - \nabla B(x)\|_x^* \le \|y-x\|_x/(1-s_\eta)$ holds for all $y$ with $\|y-x\|_x\le s_\eta$.
\end{enumerate}
\end{lemma}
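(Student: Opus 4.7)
\medskip

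\noindent\textbf{Proof proposal.} All four assertions are essentially classical consequences of the logarithmic homogeneity and self-concordance axioms, so my plan is to give short independent arguments for (i)--(iii) and then derive (iv) from (iii) via an integral representation of $\nabla B(y)-\nabla B(x)$. The main work will be in (iv); parts (i)--(iii) serve as lemmas.

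For (i), I would exploit the logarithmic homogeneity identity $B(tx)=B(x)-\vartheta\ln t$. Differentiating this once in $t$ and evaluating at $t=1$ gives $x^{T}\nabla B(x)=-\vartheta$. Differentiating the same identity first in the spatial variable (which yields $t\,\nabla B(tx)=\nabla B(x)$) and then again in $t$ at $t=1$ produces $\nabla^{2}B(x)\,x=-\nabla B(x)$, equivalently $x=-\nabla^{2}B(x)^{-1}\nabla B(x)$. Plugging this into the definitions of $\|\cdot\|_x$ and $\|\cdot\|_x^*$ in \cref{def:local-norm} gives $\|x\|_x^{2}=-x^{T}\nabla B(x)=\vartheta$ and $(\|\nabla B(x)\|_x^*)^{2}=-\nabla B(x)^{T}x=\vartheta$ simultaneously.

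For (ii) and (iii), I would invoke the standard Dikin-ellipsoid theory of self-concordant barriers. The third-derivative bound $|\varphi'''(0)|\le 2(\varphi''(0))^{3/2}$ applied along the segment from $x$ to $y$ yields, after one integration in $t$, the Hessian-stability sandwich
\begin{equation*}
(1-\|y-x\|_x)^{2}\,\nabla^{2}B(x)\ \preceq\ \nabla^{2}B(y)\ \preceq\ (1-\|y-x\|_x)^{-2}\,\nabla^{2}B(x).
\end{equation*}
Taking inverses reverses the inequalities and yields the analogous sandwich for $\nabla^{2}B(\cdot)^{-1}$; substituting into the definitions of $\|\cdot\|_y^*$ and $\|\cdot\|_x^*$ and taking square roots gives (iii). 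The barrier containment in (ii) is the standard Dikin-ellipsoid statement (the ellipsoid $\{y:\|y-x\|_x<1\}$ must avoid the boundary of $\gK$, else $B$ would have to blow up inside it, contradicting the Hessian sandwich), and the dual-cone containment follows from logarithmic homogeneity together with (i) by a routine calculation: for $s$ with $\|s+\nabla B(x)\|_x^*\le 1$ and any $z\in\gK$, apply $z=\lim_{t\downarrow 0}(x+tz)\in\mathrm{int}\gK$ and bound $s^{T}z=-\nabla B(x)^{T}z+(s+\nabla B(x))^{T}z$ using Cauchy--Schwarz in the $\nabla^{2}B(x)^{-1}$ inner product together with the homogeneity fact $-\nabla B(x)^{T}z\ge 0$ for $z\in\gK$.

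Part (iv) is the one that will need the most care. Starting from $\nabla B(y)-\nabla B(x)=\int_0^{1}\nabla^{2}B(x_t)(y-x)\,dt$ with $x_t\doteq x+t(y-x)$, I would take the dual local norm $\|\cdot\|_x^*$ inside the integral and bound each integrand term by duality:
\begin{equation*}
\|\nabla^{2}B(x_t)(y-x)\|_x^{*}=\sup_{\|w\|_x\le 1} w^{T}\nabla^{2}B(x_t)(y-x)\ \le\ \|w\|_{x_t}\,\|y-x\|_{x_t}
\end{equation*}
via Cauchy--Schwarz in the $\nabla^{2}B(x_t)$-inner product. Now (iii), combined with $\|x_t-x\|_x=t\|y-x\|_x$, converts both $\|\cdot\|_{x_t}$ factors back into $\|\cdot\|_x$ at the price of a factor $(1-t\|y-x\|_x)^{-1}$ each, yielding integrand $\|y-x\|_x/(1-t\|y-x\|_x)^{2}$. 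A direct substitution $u=1-t\|y-x\|_x$ evaluates the integral to $\|y-x\|_x/(1-\|y-x\|_x)$, and monotonicity under $\|y-x\|_x\le s_\eta<1$ gives the stated bound $\|y-x\|_x/(1-s_\eta)$. The only subtle point I expect is tracking the sign and inverse direction when converting the Hessian sandwich from (iii) into the right bound on $\|w\|_{x_t}$ in terms of $\|w\|_x$ (it is the upper bound in (iii) that is needed here, not the lower), but this is a bookkeeping matter rather than a real obstacle.
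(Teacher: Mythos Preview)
Your proposal is correct and, for part (iv), follows essentially the same route as the paper: both rely on the self-concordance integral bound $\|\nabla B(y)-\nabla B(x)\|_x^*\le \|y-x\|_x/(1-\|y-x\|_x)$. The paper obtains this by citing Eq.~(2.3) in \cite{nemirovski2004interior} (and cites \cite{he2023newton} wholesale for (i)--(iii)), whereas you derive it from scratch via the Hessian sandwich and integration. So your argument is more self-contained, but the underlying mechanism is identical.

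Two small points to tighten. First, in your sketch of the dual-cone containment in (ii), the inequality $-\nabla B(x)^{T}z\ge 0$ for $z\in\gK$ is not enough: after Cauchy--Schwarz you get $s^{T}z\ge -\nabla B(x)^{T}z-\|z\|_x$, so you actually need the stronger barrier inequality $-\nabla B(x)^{T}z\ge \|z\|_x$ for all $z\in\gK$ (a standard LHSC fact, but not a consequence of logarithmic homogeneity alone). Second, in (iv) you need the \emph{primal}-norm comparison $\|v\|_{x_t}\le(1-t\|y-x\|_x)^{-1}\|v\|_x$, while statement (iii) as written covers only the dual norm; of course the primal version follows from the same Hessian sandwich you wrote down, so this is just a matter of stating it before using it.
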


\begin{proof}
The proof of statements (i)-(iii) can be found in Lemma 1 in \cite{he2023newton}.

We next prove statement (iv). Let $y$ satisfy $\|y-x\|_x\le s_\eta$. Using this and Eq.(2.3) in \cite{nemirovski2004interior}, we obtain that $|z^T(\nabla B(y) - \nabla B(x))|\le\|y-x\|_x/(1- s_\eta)$ holds for all $z$ satisfying $\|z\|_x\le 1$. In addition, notice that $\max_{\|z\|_x\le 1}|z^T(\nabla B(y) - \nabla B(x))| = \| \nabla B(y) - \nabla B(x)\|_x^*$. Combining these, we conclude that statement (iv) holds as desired.
\end{proof}

\subsection{Proof of \cref{lem:apr-stat,lem:int-frame}}\label{subsec:proof-not-pre}

In this subsection, we prove \cref{lem:apr-stat,lem:int-frame}.

\begin{proof}[\textbf{Proof of \cref{lem:apr-stat}}]
It suffices to prove that the last two relations in \cref{def:1st-sc} hold. Using $\mu>0$, $\|\nabla \phi_\mu(x) + A^T\lambda \|_x^* \le \mu$, and the definition of $\phi_\mu$ in \cref{def:feas-r}, we have $\|((1+\mu)\nabla f(x) + A^T\lambda)/\mu + \nabla B(x)\|_x^* \le 1$, which together with \cref{lem:tech-barrier}(ii) implies that $((1+\mu)\nabla f(x) + A^T\lambda)/\mu\in\gK^*$. Thus, we observe that $x$ satisfies the third relation in \cref{def:1st-sc} with $\tilde\lambda=\lambda/(1+\mu)$. 

We next show that $x$ satisfies the fourth relation in \cref{def:1st-sc} with $\tilde\lambda=\lambda/(1+\mu)$ and $\epsilon\ge(1+\sqrt{\vartheta})\mu$. Using $\|\nabla\phi_\mu(x) + A^T\lambda \|_x^* \le \mu$ and $\|\nabla B(x)\|_x^*=\sqrt{\vartheta}$ from \cref{lem:tech-barrier}(i), we have $\mu\ge \|(1+\mu)\nabla f(x) + A^T\lambda\|_x^* - \mu\|\nabla B(x)\|_x^* = \|(1+\mu)\nabla f(x) + A^T\lambda\|_x^* - \mu\sqrt{\vartheta}$. Therefore, it follows that $\|\nabla f(x) + A^T\lambda/(1+\mu)\|_x^*\le(1+\sqrt{\vartheta})\mu/(1+\mu)<(1+\sqrt{\vartheta})\mu$, which implies that $x$ satisfies the fourth relation in \cref{def:1st-sc} with $\tilde\lambda=\lambda/(1+\mu)$ and any $\epsilon\ge(1+\sqrt{\vartheta})\mu$. Hence, the proof is complete.    
\end{proof}

\begin{proof}[\textbf{Proof of \cref{lem:int-frame}}]
Notice from the update of $x^{k+1}$ in \cref{step-dp-update} that 
\begin{align*}
\|x^{k+1}-x^k\|_{x^k}\overset{\cref{step-dp-update}}{=} \eta_k\frac{\|H_k(m^k+A^T\lambda^k)\|_{x^k}}{\|m^k+A^T\lambda^k\|_{x^k}^*} \overset{\cref{def:local-norm}}{=}\eta_k\le s_\eta <1.
\end{align*}
Hence, $\|x^{k+1}-x^k\|_{x^k} = \eta_k$ for all $k\ge0$. We now prove $x^k\in\Omega^\circ$ for all $k\ge0$ by induction. Note from \cref{alg:unf-sipm} that $x^0\in\Omega^\circ$. Suppose $x^k\in\Omega^\circ$ for some $k\ge0$. We next prove $x^{k+1}\in\Omega^\circ$. Since $x^k\in\rmint\gK$ and $\|x^{k+1}-x^k\|_{x^k}<1$, it then follows from \cref{lem:tech-barrier}(ii) that $x^{k+1}\in\mathrm{int}\gK$. In addition, by $Ax^k=b$ and \cref{step-dp-update}, one has that
\begin{align*}
Ax^{k+1} 
\overset{\cref{step-dp-update}}{=} Ax^k - \eta_k\frac{AH_k(m^k-A^T(AH_kA^T)^{-1}AH_km^k)}{\|m^k+A^T\lambda^k\|_{x^k}^*} = Ax^k=b.
\end{align*}
This together with $x^{k+1}\in\mathrm{int}\gK$ implies that $x^{k+1}\in\Omega^\circ$, which completes the induction.
\end{proof}

\subsection{Proof of some auxiliary lemmas}\label{subsec:proof-aux}

In this subsection, we prove several auxiliary lemmas. The following lemma shows that $\phi_\mu$ is locally Lipschitz continuous under \cref{asp:basic}(b) and a useful descent inequality holds.

\begin{lemma}\label{lem:Lip-phimu}
Suppose that \cref{asp:basic}(b) holds. Let $\mu\in(0,1]$. Then,
\begin{align}
&\|\nabla\phi_\mu(y)-\nabla\phi_\mu(x)\|_x^* \le L_{\phi}\|y-x\|_x\qquad  \forall x,y\in\Omega^\circ\text{ with } \|y-x\|_x\le s_\eta,\label{Lip-phimu}\\
&\phi_\mu(y) \le \phi_\mu(x) + \nabla\phi_\mu(x)^T(y-x) + L_{\phi}\|y-x\|_x^2/2\qquad  \forall x,y\in\Omega^\circ\text{ with } \|y-x\|_x\le s_\eta,\label{ineq:desc}
\end{align}
where $\phi_\mu$ and $\Omega^\circ$ are defined in \cref{def:feas-r}, and $L_\phi$ is defined in \cref{def:Lphi-1}.
\end{lemma}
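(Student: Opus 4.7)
The plan is to decompose $\phi_\mu$ into its $f$ and $B$ components and control each separately. Recalling the definition $\phi_\mu(x) = f(x) + \mu(f(x) + B(x)) = (1+\mu)f(x) + \mu B(x)$, we have $\nabla \phi_\mu(x) = (1+\mu)\nabla f(x) + \mu \nabla B(x)$. By the triangle inequality with respect to $\|\cdot\|_x^*$,
\[
\|\nabla\phi_\mu(y)-\nabla\phi_\mu(x)\|_x^* \le (1+\mu)\|\nabla f(y)-\nabla f(x)\|_x^* + \mu\|\nabla B(y)-\nabla B(x)\|_x^*.
\]
First, I would invoke Assumption \ref{asp:basic}(b) on the $f$-term and Lemma \ref{lem:tech-barrier}(iv) on the $B$-term, both of which are applicable since $\|y-x\|_x \le s_\eta$. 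Then, bounding $\mu \le 1$ and $1+\mu \le 2$, the right-hand side is at most $(2L_1 + 1/(1-s_\eta))\|y-x\|_x = L_\phi \|y-x\|_x$, matching \cref{def:Lphi-1}. This establishes \cref{Lip-phimu}.

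For the descent inequality \cref{ineq:desc}, I would use the standard trick of writing
\[
\phi_\mu(y) - \phi_\mu(x) - \nabla\phi_\mu(x)^T(y-x) = \int_0^1 \bigl(\nabla\phi_\mu(x+t(y-x)) - \nabla\phi_\mu(x)\bigr)^T(y-x)\, dt,
\]
which uses the convexity of $\Omega^\circ$ to guarantee the line segment stays in $\Omega^\circ$. For each $t \in [0,1]$, the point $x+t(y-x)$ satisfies $\|x+t(y-x)-x\|_x = t\|y-x\|_x \le s_\eta$, so by the already-proven \cref{Lip-phimu} together with the duality $|v^T w| \le \|v\|_x^* \|w\|_x$, the integrand is bounded in absolute value by $L_\phi t\|y-x\|_x^2$. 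Integrating gives the claimed $L_\phi \|y-x\|_x^2/2$ remainder term.

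I do not anticipate any real obstacles in this proof — both claims follow from routine applications of the already-established tools (Assumption \ref{asp:basic}(b) and Lemma \ref{lem:tech-barrier}(iv)), plus the standard integral form of the descent lemma. The only mild subtlety is ensuring that the segment $[x,y]$ lies in $\Omega^\circ$ so that $\nabla \phi_\mu$ is defined along it; this follows from $x,y \in \Omega^\circ$ together with Lemma \ref{lem:tech-barrier}(ii), which guarantees that the Dikin ellipsoid of radius $s_\eta < 1$ around $x$ is contained in $\mathrm{int}\,\mathcal{K}$, combined with the convexity of the affine constraint $Ax=b$.
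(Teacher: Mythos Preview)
Your proposal is correct and follows essentially the same approach as the paper: the paper also derives \cref{Lip-phimu} by combining the definition of $\phi_\mu$, $\mu\in(0,1]$, Assumption~\ref{asp:basic}(b), and Lemma~\ref{lem:tech-barrier}(iv), and then obtains \cref{ineq:desc} via the identical integral-remainder argument and the duality bound $|v^Tw|\le\|v\|_x^*\|w\|_x$. Your explicit justification that the segment $[x,y]$ remains in $\Omega^\circ$ is a nice touch that the paper leaves implicit.
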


\begin{proof}
Fix any $x,y\in\Omega^\circ$ satisfying $y\in\{y:\|y-x\|_x\le s_\eta\}$. Using the definition of $\phi_\mu$ in \cref{def:feas-r}, $\mu\in(0,1]$, \cref{asp:basic}(b), and \cref{lem:tech-barrier}(iv), we obtain that \cref{Lip-phimu} holds. 

We next prove \cref{ineq:desc}. Indeed, one has
\begin{align*}
\phi_\mu(y)  - \phi_\mu(x) - \nabla\phi_\mu(x)^T(y-x) & = \int^1_0 (\nabla\phi_\mu(x+t(y-x))-\nabla\phi_\mu(x))^T(y-x) \mathrm{d}t\\
&\overset{\cref{def:local-norm}}{\le} \int^1_0 \|\nabla\phi_\mu(x+t(y-x))-\nabla\phi_\mu(x)\|_x^*\mathrm{d}t\|y-x\|_x \overset{\cref{Lip-phimu}}{\le} \frac{L_{\phi}}{2}\|y-x\|_x^2.
\end{align*}
Hence, \cref{ineq:desc} holds as desired.
\end{proof}

The following lemma establishes a key inequality under \cref{asp:2nd-smooth}, whose proof is identical to that of Lemma 3 in \cite{he2023newton} and is therefore omitted here.

\begin{lemma}\label{lem:2nd-smth-desc}
Suppose that \cref{asp:2nd-smooth} holds. Then,
\begin{align*}
\|\nabla f(y) - \nabla f(x) - \nabla^2 f(x)(y-x)\|_x^*  \le \frac{L_2}{2}\|y-x\|_x^2\qquad \forall x,y\in\Omega^\circ\text{ with } \|y-x\|_x\le s_{\eta},
\end{align*}
where $\Omega^\circ$ is defined in \cref{def:feas-r}, $L_2$ is given in \cref{asp:2nd-smooth}(b), and $s_\eta$ is an input of \cref{alg:unf-sipm}.
\end{lemma}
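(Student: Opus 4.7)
The plan is to mimic the standard proof of the quadratic approximation error bound for a function with Lipschitz Hessian, but performed throughout in the local norm induced by the LHSC barrier $B$ at the base point $x$. The fundamental identity is
\begin{equation*}
\nabla f(y) - \nabla f(x) - \nabla^2 f(x)(y-x) \;=\; \int_0^1 \bigl[\nabla^2 f(x+t(y-x)) - \nabla^2 f(x)\bigr](y-x)\,\mathrm{d}t,
\end{equation*}
which follows from applying the fundamental theorem of calculus to $t\mapsto \nabla f(x+t(y-x))$. Before using this, I would first verify that the segment $\{x+t(y-x):t\in[0,1]\}$ lies in $\Omega^\circ$, so that $\nabla^2 f$ is well defined along it: since $Ax=Ay=b$, every point on the segment satisfies the affine constraint, and since $\|x+t(y-x)-x\|_x = t\|y-x\|_x \le s_\eta<1$, Lemma~\ref{lem:tech-barrier}(ii) places the segment inside $\mathrm{int}\,\gK$.

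Next I would take $\|\cdot\|_x^*$ on both sides of the integral identity and use the triangle inequality for integrals to bring the norm inside. The key step is then to bound the integrand using the matrix local norm: by definition \cref{def:local-m-norm}, for any symmetric matrix $M$ and vector $v$ we have $\|Mv\|_x^* \le \|M\|_x^*\|v\|_x$. Applying this with $M = \nabla^2 f(x+t(y-x))-\nabla^2 f(x)$ and $v = y-x$ yields a bound in terms of $\|\nabla^2 f(x+t(y-x))-\nabla^2 f(x)\|_x^* \cdot \|y-x\|_x$.

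Finally, since $\|(x+t(y-x))-x\|_x = t\|y-x\|_x \le s_\eta$, Assumption~\ref{asp:2nd-smooth}(b) applies and gives
\begin{equation*}
\|\nabla^2 f(x+t(y-x)) - \nabla^2 f(x)\|_x^* \;\le\; L_2\, t\,\|y-x\|_x.
\end{equation*}
Combining this with the previous bound produces an integrand of $L_2 t\|y-x\|_x^2$, and integrating $\int_0^1 t\,\mathrm{d}t = 1/2$ yields the claimed inequality with constant $L_2/2$.

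I do not expect any substantive obstacle; the only point requiring care is that the local-norm machinery is anchored at the fixed base point $x$ throughout, so the matrix local norm inequality and the Hessian-Lipschitz bound in Assumption~\ref{asp:2nd-smooth}(b) — both stated with respect to $\|\cdot\|_x$ and $\|\cdot\|_x^*$ — can be applied directly without needing any change-of-basepoint arguments via Lemma~\ref{lem:tech-barrier}(iii). This parallels the Euclidean-norm proof exactly, with the only substitution being the use of \cref{def:local-m-norm} in place of the spectral-norm/Euclidean-norm compatibility inequality.
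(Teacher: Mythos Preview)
Your proposal is correct and is exactly the standard argument one expects here. The paper does not spell out its own proof but simply points to Lemma~3 of \cite{he2023newton}, whose proof is this same integral-remainder computation in local norms; your write-up fills in precisely those details, including the segment feasibility check and the use of \cref{def:local-m-norm} for the matrix--vector bound.
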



The following lemma concerns the descent of $\phi_\mu$ for iterates generated by \cref{alg:unf-sipm}. 

\begin{lemma}\label{lem:general-tech}
Suppose that \cref{asp:basic} holds. Let $\{(x^k,\lambda^k)\}_{k\ge0}$ be the sequence generated by \cref{alg:unf-sipm} with input parameters $\{(\eta_k,\mu_k)\}$. Then, for all $k\ge0$, it holds that
\begin{align}
\phi_{\mu_k}(x^{k+1}) \le \phi_{\mu_k}(x^k) - \eta_k \|\nabla\phi_{\mu_k}(x^k)+A^T\lambda^k\|^*_{x^k} + 4\eta_k \|\nabla f(x^k)-\overline{m}^k\|_{x^k}^* + \frac{L_{\phi}}{2}\eta_k^2,\label{ineq:general-descent}
\end{align}
where $\phi_{\mu_k}$ and $L_{\phi}$ are defined in \cref{def:feas-r} and \cref{def:Lphi-1}, respectively.
\end{lemma}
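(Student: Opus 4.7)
The plan is to combine the local descent inequality \cref{ineq:desc} already established in \cref{lem:Lip-phimu} with a structural analysis of the step direction produced by \cref{alg:unf-sipm}. The prerequisite for invoking \cref{ineq:desc} is $\|x^{k+1}-x^k\|_{x^k}\le s_\eta$, which is supplied for free by \cref{lem:int-frame} since $\|x^{k+1}-x^k\|_{x^k}=\eta_k \le s_\eta$. This immediately gives
\begin{align*}
\phi_{\mu_k}(x^{k+1}) \le \phi_{\mu_k}(x^k) + \nabla\phi_{\mu_k}(x^k)^T(x^{k+1}-x^k) + \frac{L_\phi}{2}\eta_k^2,
\end{align*}
and reduces the entire claim to estimating the first-order term by $-\eta_k\|\nabla\phi_{\mu_k}(x^k)+A^T\lambda^k\|_{x^k}^* + 4\eta_k\|\nabla f(x^k)-\overline{m}^k\|_{x^k}^*$.

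The key structural observation for the first-order term is that the dual update in \cref{step-dp-update} makes the step direction orthogonal to $\mathrm{range}(A^T)$. Indeed, setting $g^k = m^k+A^T\lambda^k$ so that $x^{k+1}-x^k = -\eta_k H_k g^k/\|g^k\|_{x^k}^*$, the explicit formula $\lambda^k=-(AH_kA^T)^{-1}AH_k m^k$ forces $AH_k g^k = 0$. This allows us to replace $\nabla\phi_{\mu_k}(x^k)$ by $\nabla\phi_{\mu_k}(x^k)+A^T\lambda^k$ inside the inner product for free, after which I would split
\begin{align*}
\nabla\phi_{\mu_k}(x^k)+A^T\lambda^k = g^k + \bigl(\nabla\phi_{\mu_k}(x^k)-m^k\bigr).
\end{align*}
The $g^k$-contribution evaluates exactly to $-\eta_k\|g^k\|_{x^k}^*$ via the identity $(g^k)^T H_k g^k = (\|g^k\|_{x^k}^*)^2$ from \cref{def:local-norm} (since $H_k=\nabla^2 B(x^k)^{-1}$), while the residual contribution is dominated by $\eta_k\|\nabla\phi_{\mu_k}(x^k)-m^k\|_{x^k}^*$ through Cauchy-Schwarz in the dual local-norm pair, using the identity $\|H_k v\|_{x^k}=\|v\|_{x^k}^*$ so that the normalized step direction has unit $\|\cdot\|_{x^k}$ norm.

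The final step accounts for the constant $4$ via two elementary factors of $2$. The triangle inequality $\|g^k\|_{x^k}^* \ge \|\nabla\phi_{\mu_k}(x^k)+A^T\lambda^k\|_{x^k}^* - \|\nabla\phi_{\mu_k}(x^k)-m^k\|_{x^k}^*$ promotes the residual coefficient from $1$ to $2$; then the algebraic identity $\nabla\phi_{\mu_k}(x^k)-m^k = (1+\mu_k)(\nabla f(x^k)-\overline{m}^k)$, obtained from the definitions $\phi_{\mu_k}=(1+\mu_k)f+\mu_k B$ in \cref{def:feas-r} and $m^k = (1+\mu_k)\overline{m}^k+\mu_k\nabla B(x^k)$ in \cref{alg:unf-sipm}, combined with $\mu_k\in(0,1]$, bounds $\|\nabla\phi_{\mu_k}(x^k)-m^k\|_{x^k}^*$ by $2\|\nabla f(x^k)-\overline{m}^k\|_{x^k}^*$. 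Inserting the resulting estimate back into the descent inequality yields \cref{ineq:general-descent}. I do not expect a real obstacle here: the argument is essentially bookkeeping, with the only structural inputs being the orthogonality $AH_k g^k=0$ baked into the choice of $\lambda^k$ and the clean residual identity above.
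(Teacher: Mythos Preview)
Your proposal is correct and follows essentially the same route as the paper's proof: apply the descent inequality \cref{ineq:desc} with $\|x^{k+1}-x^k\|_{x^k}=\eta_k$, use the orthogonality $A(x^{k+1}-x^k)=0$ (equivalently $AH_kg^k=0$) to insert $A^T\lambda^k$ for free, split off the $g^k=m^k+A^T\lambda^k$ part to get $-\eta_k\|g^k\|_{x^k}^*$, bound the residual by Cauchy--Schwarz, and then collect the two factors of $2$ via the triangle inequality and the identity $\nabla\phi_{\mu_k}(x^k)-m^k=(1+\mu_k)(\nabla f(x^k)-\overline{m}^k)$ with $\mu_k\le1$. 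The only cosmetic difference is that the paper writes the orthogonality as $A(x^{k+1}-x^k)=0$ rather than $AH_kg^k=0$, but these are the same statement.
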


\begin{proof}
We fix an arbitrary $k\ge0$. Recall from \cref{lem:int-frame} that $\|x^{k+1}-x^k\|_{x^k}=\eta_k$ and $x^k,x^{k+1}\in\Omega^\circ$. It then follows that $A(x^{k+1}-x^k)=0$, and from \cref{step-dp-update} that
\begin{align}\label{eq:square-unit}
\langle m^k + A^T\lambda^k, x^{k+1} - x^k\rangle \overset{\cref{step-dp-update}}{=} -\eta_k\frac{(m^k+A^T\lambda^k)^TH_k(m^k+A^T\lambda^k)}{\|m^k + A^T\lambda^k\|^*_{{x^k}}}\overset{\cref{def:local-norm}}{=}-\eta_k\|m^k + A^T\lambda^k\|_{x^k}^*.
\end{align}
In view of these and \cref{ineq:desc} with $(x,y,\mu,\eta)=(x^k,x^{k+1},\mu_k,\eta_k)$, one can see that
\begin{align}
\phi_{\mu_k}(x^{k+1})& \le \phi_{\mu_k}(x^k) + \langle\nabla\phi_{\mu_k}(x^k),x^{k+1}-x^k\rangle + \frac{L_{\phi}}{2}\|x^{k+1} - x^k\|^2_{x^k}\nonumber\\
& = \phi_{\mu_k}(x^k) + \langle m^k + A^T\lambda^k, x^{k+1} - x^k\rangle + \langle\nabla\phi_{\mu_k}(x^k)-m^k, x^{k+1}-x^k\rangle + \frac{L_{\phi}}{2}\eta_k^2\nonumber\\
& \overset{ \cref{eq:square-unit}}{=} \phi_{\mu_k}(x^k) - \eta_k \|m^k+A^T\lambda^k\|_{x^k}^* + \langle\nabla\phi_{\mu_k}(x^k)-m^k, x^{k+1} - x^k\rangle + \frac{L_{\phi}}{2}\eta_k^2\nonumber\\
&\le \phi_{\mu_k}(x^k) - \eta_k \|m^k + A^T\lambda^k\|_{x^k}^* + \eta_k\|\nabla\phi_{\mu_k}(x^k)-m^k\|_{x^k}^* + \frac{L_{\phi}}{2}\eta_k^2\nonumber\\
&\le\phi_{\mu_k}(x^k) - \eta_k\|\nabla\phi_{\mu_k}(x^k)+A^T\lambda^k\|_{x^k}^* + 2\eta_k\|\nabla\phi_{\mu_k}(x^k)-m^k\|_{x^k}^* + \frac{L_{\phi}}{2}\eta_k^2,\nonumber\\
&=\phi_{\mu_k}(x^k) - \eta_k\|\nabla\phi_{\mu_k}(x^k)+A^T\lambda^k\|_{x^k}^* + 2\eta_k{(1+\mu_k)}\|\nabla f(x^k)- \overline{m}^k\|_{x^k}^* + \frac{L_{\phi}}{2}\eta_k^2,\nonumber
\end{align}
where the first equality is due to $\|x^{k+1}-x^k\|_{x^k}=\eta_k$ and $A(x^{k+1}-x^k)=0$, the second inequality follows from the Cauchy-Schwarz inequality and $\|x^{k+1}-x^k\|_{x^k}=\eta_k$, the last inequality is due to the triangular inequality, and the last equality is due to the definition of $\phi_\mu$ and $m^k=\overline{m}^k+\mu_k(\overline{m}^k+\nabla B(x^k))$. This together with $\mu_k\le 1$ proves this lemma as desired.
\end{proof}

We next present a lemma regarding the estimation of the partial sums of series.

\begin{lemma}\label{lem:series}
Let $\zeta(\cdot)$ be a convex univariate function. Then, for any integers $a,b$ satisfying $[a-1/2, b+1/2]\subset\mathrm{dom}\zeta$, it holds that $\sum_{p=a}^{b} \zeta(p) \le \int_{a-1/2}^{b+1/2}\zeta(\tau) \mathrm{d}\tau$.
\end{lemma}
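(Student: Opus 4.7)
The plan is to apply the midpoint case of the Hermite--Hadamard inequality on each unit interval $[p-1/2, p+1/2]$ and then sum. Since $\zeta$ is convex on $\mathrm{dom}\,\zeta \supseteq [a-1/2, b+1/2]$, for every integer $p$ with $a \le p \le b$ the interval $[p-1/2, p+1/2]$ is contained in $\mathrm{dom}\,\zeta$, and Hermite--Hadamard yields
\begin{equation*}
\zeta(p) \;=\; \zeta\!\left(\frac{(p-1/2)+(p+1/2)}{2}\right) \;\le\; \int_{p-1/2}^{p+1/2} \zeta(\tau)\,\mathrm{d}\tau.
\end{equation*}

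Summing this estimate over $p = a, a+1, \dots, b$ and observing that the unit intervals $[p-1/2, p+1/2]$ tile $[a-1/2, b+1/2]$ with disjoint interiors gives
\begin{equation*}
\sum_{p=a}^{b} \zeta(p) \;\le\; \sum_{p=a}^{b} \int_{p-1/2}^{p+1/2} \zeta(\tau)\,\mathrm{d}\tau \;=\; \int_{a-1/2}^{b+1/2} \zeta(\tau)\,\mathrm{d}\tau,
\end{equation*}
which is the desired bound.

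The only nontrivial point is the midpoint inequality itself; if I cannot cite Hermite--Hadamard directly, I would derive it from scratch by noting that for $t \in [-1/2, 1/2]$, convexity of $\zeta$ gives $\zeta(p) = \zeta(\tfrac{1}{2}(p+t) + \tfrac{1}{2}(p-t)) \le \tfrac{1}{2}\zeta(p+t) + \tfrac{1}{2}\zeta(p-t)$, and integrating this over $t \in [0, 1/2]$ and performing the substitution in the right-hand side produces $\zeta(p) \le \int_{p-1/2}^{p+1/2} \zeta(\tau)\,\mathrm{d}\tau$. No obstacle is expected; the lemma is a clean consequence of convexity plus telescoping of integrals.
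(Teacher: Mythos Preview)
Your proof is correct and follows essentially the same approach as the paper: apply the midpoint (Hermite--Hadamard) inequality $\zeta(p)\le\int_{p-1/2}^{p+1/2}\zeta(\tau)\,\mathrm{d}\tau$ on each unit interval and sum. The paper's proof is simply a one-line version of yours, stating the per-term bound and the telescoping sum without naming Hermite--Hadamard explicitly.
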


\begin{proof}
Since $\zeta$ is convex, one has $\sum_{p=a}^{b} \zeta(p) \le \sum_{p=a}^{b} \int_{p-1/2}^{p+1/2}\zeta(\tau)\mathrm{d}\tau = \int_{a-1/2}^{b+1/2}\zeta(\tau)\mathrm{d}\tau$.    
\end{proof}

As a consequence of \cref{lem:series}, we consider $\zeta(\tau)=1/\tau^\alpha$ for some $\alpha\in(0,\infty]$, where $\tau\in(0,\infty)$. Then, for any positive integers $a,b$, one has
\begin{align}
\sum_{p=a}^b 1/p^\alpha \le \left\{\begin{array}{ll}
\ln(b+1/2) - \ln(a-1/2)&\text{if }\alpha=1,\\[6pt]
\frac{1}{1-\alpha}((b+1/2)^{1-\alpha} - (a-1/2)^{1-\alpha})&\text{if }\alpha\in(0,1)\cup(1,+\infty).
\end{array}\right.
\label{upbd:series-ka}
\end{align}

{
We next provide an auxiliary lemma that will be used to analyze the complexity involving polylogarithmic terms for our methods.

\begin{lemma}\label{lem:rate-complexity}
Let $\beta\in(0,1)$ and $u\in(0,1/e)$ be given. Then, $1/v^\beta\ln v\le 2u/\beta$ holds for all $v\ge(1/u\ln(1/u))^{1/\beta}$.
\end{lemma}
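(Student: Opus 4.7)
}
The plan is to exploit the fact that $g(v) \doteq \ln v / v^\beta$ is eventually decreasing, and then show that the threshold $v_0 \doteq (u^{-1}\ln(1/u))^{1/\beta}$ already lies in the decreasing regime and that $g(v_0) \le 2u/\beta$.

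First, I would compute $g'(v) = (1 - \beta \ln v)/v^{\beta+1}$, which is nonpositive exactly when $v \ge e^{1/\beta}$. So it suffices to verify (i) $v_0 \ge e^{1/\beta}$, and (ii) $g(v_0) \le 2u/\beta$; the conclusion for all $v \ge v_0$ then follows from monotonicity.

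For (i), raising to the $\beta$th power reduces the inequality to $u^{-1}\ln(1/u) \ge e$. Since $u \in (0,1/e)$ implies $u^{-1} > e$ and $\ln(1/u) > 1$, this holds trivially.

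For (ii), I would plug in: $v_0^\beta = u^{-1}\ln(1/u)$, and
\begin{equation*}
\ln v_0 = \frac{1}{\beta}\bigl(\ln(1/u) + \ln\ln(1/u)\bigr),
\end{equation*}
so
\begin{equation*}
g(v_0) = \frac{u}{\beta\ln(1/u)}\bigl(\ln(1/u) + \ln\ln(1/u)\bigr) = \frac{u}{\beta}\left(1 + \frac{\ln\ln(1/u)}{\ln(1/u)}\right).
\end{equation*}
Because $\ln(1/u) > 1$ when $u < 1/e$, the ratio $\ln\ln(1/u)/\ln(1/u)$ is at most $1$ (indeed strictly less than $1$), giving $g(v_0) \le 2u/\beta$ as desired.

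No step is genuinely difficult; the only subtlety is checking that the threshold $v_0$ lies past the maximizer $e^{1/\beta}$ of $g$, which is ensured by the assumption $u < 1/e$. The proof is a short calculus exercise once these observations are in place.
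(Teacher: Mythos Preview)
Your proposal is correct and follows essentially the same argument as the paper: both verify that the threshold $v_0=(u^{-1}\ln(1/u))^{1/\beta}$ lies beyond the maximizer $e^{1/\beta}$ of $g(v)=\ln v/v^\beta$, use monotonicity to reduce to evaluating $g$ at $v_0$, and then bound $g(v_0)=\tfrac{u}{\beta}\bigl(1+\tfrac{\ln\ln(1/u)}{\ln(1/u)}\bigr)\le 2u/\beta$ via $\ln\ln(1/u)\le\ln(1/u)$. The only cosmetic difference is that you explicitly compute $g'$, whereas the paper simply asserts the decreasing property.
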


\begin{proof}
Fix any $v$ satisfying $v\ge(1/u\ln(1/u))^{1/\beta}$. It then follows from $u\in(0,1/e)$ that
\begin{align}\label{aux-uv-ln}
v\ge(1/u\ln(1/u))^{1/\beta}> e^{1/\beta}.    
\end{align}
Denote $\phi(v)\doteq 1/v^{\beta}\ln v$. One can verify that $\phi$ is decreasing over $(e^{1/\beta},\infty)$. Using this and \cref{aux-uv-ln}, we obtain that 
\begin{align*}
1/v^{\beta}\ln v = \phi(v) \le  \phi((1/u\ln(1/u))^{1/\beta}) = \frac{u}{\beta} \Big(1+\frac{\ln\ln(1/u)}{\ln(1/u)}\Big)\le \frac{2u}{\beta},
\end{align*}
where the last inequality is due to $\ln\ln(1/u)\le\ln(1/u)$ for all $u\in(0,1/e)$. Hence, the conclusion of this lemma holds as desired.
\end{proof}
}

\subsection{Proof of the main results in \cref{subsec:ipm-mb}}\label{subsec:proof-me}

\begin{proof}[\textbf{Proof of \cref{lem:est-error-me}}]
Fix any $k\ge0$. It follows from \cref{asp:unbias-boundvar} and \cref{sipm-over-mk} that
\begin{align*}
\E_{\{\xi_i^k\}_{i\in \mathscr{B}_k}}[(\|\overline{m}^k - \nabla f(x^k)\|_{x^k}^*)^2]
= \frac{1}{|\mathscr{B}_k|^2} \sum_{i\in \mathscr{B}_k}\E_{\xi_i^k}[(\|G(x^k;\xi_i^k) - \nabla f(x^k)\|_{x^k}^*)^2] \le \frac{\sigma^2}{|\mathscr{B}_k|},
\end{align*}
where the first equality follows from \cref{sipm-over-mk} and the first relation in \cref{asp:unbias-boundvar}, and the last inequality follows from the second relation in \cref{asp:unbias-boundvar}. Hence, \cref{bd:variance} holds as desired.    
\end{proof}

\begin{proof}[\textbf{Proof of \cref{thm:stat-ave-me}}]
By summing \cref{ineq:general-descent} over $k=0,\ldots,K-1$ and taking expectation on $\{\xi_i^k\}_{i\in \mathscr{B}_k,1\le k\le K-1}$, we have
\begin{align*}
&\sum_{k=0}^{K-1}\eta_k \E[\|\nabla\phi_{\mu_k}(x^k) + A^T{\lambda}^k\|_{x^k}^*]\le \phi_{\mu_0}(x^0) - \E[\phi_{\mu_{K}}(x^{K})] + \sum_{k=0}^{K-1}(\mu_{k+1}-\mu_k)\E[f(x^{k+1}) + B(x^{k+1})]\nonumber\\
&\qquad  + 4\sum_{k=0}^{K-1}\eta_k\E[\|\nabla f(x^k) - \overline{m}^k\|_{x^k}^*] + \frac{L_{\phi}}{2}\sum_{k=0}^{K-1}\eta_k^2\nonumber\\
&\le\phi_{\mu_0}(x^0) - \E[\phi_{\mu_{K}}(x^{K})] + (\mu_{K}-\mu_0)\phi_{\mathrm{low}} +  4\sum_{k=0}^{K-1}\eta_k\E[\|\nabla f(x^k) - \overline{m}^k\|_{x^k}^*] + \frac{L_{\phi}}{2}\sum_{k=0}^{K-1}\eta_k^2\nonumber\\
&\le \phi_{\mu_0}(x^0) - \E[\phi_{\mu_{K}}(x^{K})] + (\mu_{K}-\mu_0)\phi_{\mathrm{low}} + 4\sigma\sum_{k=0}^{K-1}\frac{\eta_k}{|\mathscr{B}_k|^{1/2}} + \frac{L_{\phi}}{2}\sum_{k=0}^{K-1}\eta_k^2,\nonumber\\
&\le f(x^0) + \mu_0 (f(x^0) + B(x^0)) - (1+\mu_0)\phi_{\mathrm{low}} + 4\sigma\sum_{k=0}^{K-1}\frac{\eta_k}{|\mathscr{B}_k|^{1/2}} + \frac{L_{\phi}}{2}\sum_{k=0}^{K-1}\eta_k^2\nonumber\\
&\le f(x^0) + [f(x^0) + B(x^0)]_+ - 2[\phi_{\mathrm{low}}]_- + 4\sigma\sum_{k=0}^{K-1}\frac{\eta_k}{|\mathscr{B}_k|^{1/2}} + \frac{L_{\phi}}{2}\sum_{k=0}^{K-1}\eta_k^2, 
\end{align*}
where the first inequality follows from  \cref{ineq:general-descent}, the second inequality follows from $\mu_k-\mu_{k+1}\ge0$ and $f(x^{k+1}) + B(x^{k+1})\ge \phi_{\mathrm{low}}$ (due to \cref{def:lwbd-phi} and $x^{k+1}\in\Omega^\circ$), the third inequality follows from $\E[X]^2\le \E[X^2]$ for any random variable $X$ and \cref{bd:variance}, the fourth inequality is due to the definitions of $\phi_\mu$ and \cref{to-lwbd-phimu}, and the last inequality is due to $\mu_0\in(0,1]$. Using the above inequality, \cref{lwbd-fb}, and the fact that $\{\eta_k\}_{k\ge0}$ is nonincreasing, we have
\begin{align*}
\sum_{k=0}^{K-1} \E[\|\nabla\phi_{\mu_k}(x^k) + A^T{\lambda}^k\|_{x^k}^*]
\le\frac{\Delta(x^0)}{\eta_{K-1}} + \frac{1}{\eta_{K-1}} \sum_{k=0}^{K-1}\eta_k(4\sigma/|\mathscr{B}_k|^{1/2}+L_\phi\eta_k/2).
\end{align*}
Hence, \cref{upbd:me-exp-xk} holds as desired.   
\end{proof}

\begin{proof}[\textbf{Proof of \cref{cor:order-me}}]
Substituting \cref{etak-Bk-me} into \cref{upbd:me-exp-xk}, we obtain that for all $K\ge3$,
\begin{align}
\sum_{k=0}^{K-1}\E[\|\nabla \phi_{\mu_k}(x^k) + A^T{\lambda}^k\|^*_{x^k}] & \overset{\cref{upbd:me-exp-xk}\cref{etak-Bk-me}}{\le} \frac{K^{1/2}\Delta(x^0)}{s_\eta} + K^{1/2} \sum_{k=0}^{K-1}\frac{8\sigma + s_\eta L_\phi}{2(k+1)}\nonumber\\
&\le \frac{K^{1/2}\Delta(x^0)}{s_\eta} +  \Big(4\sigma + \frac{s_\eta L_\phi}{2}\Big)K^{1/2}\ln(2K+1) \nonumber \\
& \le \Big(\frac{\Delta(x^0)}{s_\eta} + 8\sigma+ s_\eta L_\phi\Big)K^{1/2}\ln K\nonumber\\
&\overset{\cref{def:Kme}}{=}M_{\mathrm{me}}K^{1/2}\ln K/2,\label{upbd:me-exp-xk-proof}
\end{align}
where the second inequality follows from $\sum_{k=0}^{K-1}1/(k+1)\le\ln(2K+1)$ due to \cref{upbd:series-ka} with $(a,b,\alpha)=(1,K,1)$, and the last inequality follows from $1\le \ln K$ and $\ln(2K+1)\le 2\ln K$ given that $K\ge3$. Since $\kappa(K)$ is uniformly drawn from $\{\lfloor K/2\rfloor,\ldots,K-1\}$, we have that for all $K\ge3$,
\begin{align}
\E[\|\nabla \phi_{\mu_{\kappa(K)}}(x^{\kappa(K)}) + A^T\lambda^{\kappa(K)}\|_{x^{\kappa(K)}}^*] & =\frac{1}{K - \lfloor K/2\rfloor}\sum_{k=\lfloor K/2\rfloor}^{K-1}\E[\|\nabla \phi_{\mu_k}(x^k) + A^T{\lambda}^k\|^*_{x^k}] \nonumber\\
&\le\frac{2}{K}\sum_{k=0}^{K-1}\E[\|\nabla \phi_{\mu_k}(x^k) + A^T{\lambda}^k\|^*_{x^k}] \overset{\cref{upbd:me-exp-xk-proof}}{\le} M_{\mathrm{me}}K^{-1/2}\ln K.\label{upbd-me-spec}
\end{align}
By \cref{lem:rate-complexity} with $(\beta,u,v)=(1/2,\epsilon/(4M_{\mathrm{me}}(1+\sqrt{\vartheta})),K)$, one can see that 
\begin{align*}
K^{-1/2}\ln K\le \frac{\epsilon}{M_{\mathrm{me}}(1+\sqrt{\vartheta})}\qquad\forall K\ge \Big(\frac{4M_{\mathrm{me}}(1+\sqrt{\vartheta})}{\epsilon}\ln\Big(\frac{4M_{\mathrm{me}}(1+\sqrt{\vartheta})}{\epsilon}\Big)\Big)^2,   
\end{align*}
which together with \cref{upbd-me-spec} implies that
\begin{align}
&\E[\|\nabla \phi_{\mu_{\kappa(K)}}(x^{\kappa(K)}) + A^T\lambda^{\kappa(K)}\|_{x^{\kappa(K)}}^*]\le \frac{\epsilon}{1+\sqrt{\vartheta}}\nonumber\\
&\forall K\ge \max\Big\{\Big(\frac{4M_{\mathrm{me}}(1+\sqrt{\vartheta})}{\epsilon}\ln\Big(\frac{4M_{\mathrm{me}}(1+\sqrt{\vartheta})}{\epsilon}\Big)\Big)^2,3\Big\}.    \label{inter-cmplx-me}
\end{align}
On the other hand, when $K\ge2((1+\sqrt{\vartheta})/\epsilon)^2$, by the definition of $\{\mu_k\}_{k\ge0}$ in \cref{etak-Bk-me} and the fact that $\kappa(K)$ is uniformly selected from $\{\lfloor K/2\rfloor,\ldots,K-1\}$, one has that $\mu_{\kappa(K)}=\mu_{\lfloor K/2\rfloor}=\epsilon/(1+\sqrt{\vartheta})$. Combining this with \cref{inter-cmplx-me}, we obtain that \cref{complexity-me} holds as desired, and the proof of this theorem is complete.
\end{proof}

\subsection{Proof of the main results in \cref{subsec:sipm-pm}}\label{subsec:proof-pm}

\begin{proof}[\textbf{Proof of \cref{lem:pm-estimate}}]
Fix any $k\ge0$. Recall from \cref{lem:int-frame} that $\|x^{k+1}-x^k\|_{x^k}=\eta_k$. Using this and \cref{sipm-pm-over-mk}, we have that
\begin{align}
&\E_{\xi^{k+1}}[(\|\overline{m}^{k+1} - \nabla f(x^{k+1})\|_{x^{k+1}}^*)^2] \nonumber\\ 
&\overset{\cref{sipm-pm-over-mk}}{=} \E_{\xi^{k+1}}[(\|(1-\gamma_k)(\overline{m}^k - \nabla f(x^{k+1})) + \gamma_k(G(x^{k+1},\xi^{k+1}) - \nabla f(x^{k+1}))\|_{x^{k+1}}^*)^2]\nonumber\\
&\overset{\cref{asp:unbias-boundvar}}{\le} (1-\gamma_k)^2(\|\overline{m}^k - \nabla f(x^{k+1})\|_{x^{k+1}}^*)^2 + \sigma^2\gamma_k^2\nonumber\\
&\le(1-\alpha_k)^2(\|\overline{m}^{k} - \nabla f(x^{k+1})\|_{x^{k}}^*)^2 + \sigma^2\gamma_k^2,\label{ineq:grad-error-pm}
\end{align}
where the last inequality follows from \cref{lem:tech-barrier}(iii) and the definition of $\alpha_k$. Also, notice that for all $a>0$,
\begin{align}
&(1-\alpha_k)^2(\|\overline{m}^k - \nabla f(x^{k+1})\|_{x^k}^*)^2\nonumber\\
&\le (1-\alpha_k)^2(1+a)(\|\overline{m}^k - \nabla f(x^k)\|_{x^k}^*)^2 +  (1-\alpha_k)^2(1+1/a)(\|\nabla f(x^{k+1}) - \nabla f(x^k)\|_{x^k}^*)^2\nonumber\\
&\le (1-\alpha_k)^2(1+a)(\|\overline{m}^k - \nabla f(x^k)\|_{x^k}^*)^2 + (1-\alpha_k)^2(1+1/a)L_1^2\eta_k^2,\nonumber
\end{align}
where the first inequality is due to {$\|u+v\|^2\le(1+a)\|u\|^2+(1+1/a)\|v\|^2$ for all $u,v\in\R^n$ and $a>0$}, and the last inequality follows from \cref{ineq:1st-Lip} and $\|x^{k+1}-x^k\|_{x^k}=\eta_k$. Letting $a=\alpha_k/(1-\alpha_k)$ and combining this inequality with \cref{ineq:grad-error-pm}, we obtain that
\begin{align*}
\E_{\xi^{k+1}}[(\|\overline{m}^{k+1} - \nabla f(x^{k+1})\|_{x^{k+1}}^*)^2]\le (1-\alpha_k)(\|\overline{m}^k - \nabla f(x^k)\|_{x^k}^*)^2 + \frac{(1-\alpha_k)^2L_1^2\eta_k^2}{\alpha_k} +\sigma^2 \gamma_k^2.
\end{align*}
Since $\gamma_k>\eta_k$, we have $\alpha_k\in(0,1)$. This along with the above inequality implies that \cref{ineq:var-recur} holds as desired.    
\end{proof}

\begin{proof}[\textbf{Proof of \cref{thm:converge-rate-ipmpm}}]
For convenience, we define the following potentials: 
\begin{align}\label{P-pm-k}
P_k \doteq \phi_{\mu_k}(x^k) + (\|\overline{m}^k - \nabla f(x^k)\|_{x^k}^*)^2/L_1\qquad \forall k\ge0.
\end{align}
Recall from Algorithm \ref{alg:unf-sipm} that $\{\mu_k\}$ is nonincreasing. By these, \cref{def:lwbd-phi}, \cref{ineq:var-recur}, and \cref{ineq:general-descent}, one has that for all $k\ge0$,
\begin{align}
&\E_{\xi^{k+1}}[P_{k+1}]\overset{\cref{P-pm-k}}{=}\E_{\xi^{k+1}}[\phi_{\mu_{k+1}}(x^{k+1}) + (\|\overline{m}^{k+1} - \nabla f(x^{k+1})\|_{x^{k+1}}^*)^2/L_1]\nonumber\\
&=  (\mu_{k+1}-\mu_k)(f(x^{k+1})+B(x^{k+1})) + \E_{\xi^{k+1}}[\phi_{\mu_{k}}(x^{k+1}) + (\|\overline{m}^{k+1} - \nabla f(x^{k+1})\|_{x^{k+1}}^*)^2/L_1]\nonumber\\
&\overset{\cref{def:lwbd-phi}\cref{ineq:var-recur}\cref{ineq:general-descent}}{\le} (\mu_{k+1}-\mu_k)\phi_{\mathrm{low}} + \phi_{\mu_k}(x^k)- \eta_k\|\nabla\phi_{\mu_k}(x^k)+A^T\lambda^k\|^*_{x^k} + 4\eta_k \|\overline{m}^k - \nabla f(x^k)\|_{x^k}^* \nonumber\\
&\qquad + \frac{L_{\phi}}{2}\eta_k^2 + (1-\alpha_k)(\|\overline{m}^k- \nabla f(x^k)\|_{x^k}^*)^2/L_1 + \frac{L_1\eta_k^2}{\alpha_k} +\frac{\sigma^2\gamma_k^2}{L_1}.\label{upbd-potent-pm}
\end{align}
In addition, notice that $4\eta_k \|\overline{m}^k - \nabla f(x^k)\|_{x^k}^* \le 4L_1\eta_k^2/\alpha_k +  \alpha_k(\|\overline{m}^k - \nabla f(x^k)\|_{x^k}^*)^2/L_{1}$, which together with \cref{P-pm-k} and \cref{upbd-potent-pm} implies that for all $k\ge0$,
\begin{align}
\E_{\xi^{k+1}}[P_{k+1}]
\le(\mu_{k+1}-\mu_k)\phi_{\mathrm{low}} + P_k - \eta_k\|\nabla\phi_{\mu_k}(x^k) + A^T{\lambda}^k\|^*_{x^k} + \frac{L_{\phi}}{2}\eta_k^2 + \frac{5L_1\eta_k^2}{\alpha_k} + \frac{\sigma^2\gamma_k^2}{L_{1}}.\label{desc-pm-oneiter}
\end{align}
On the other hand, by \cref{asp:unbias-boundvar}, \cref{to-lwbd-phimu}, \cref{P-pm-k}, $\overline{m}^{0}=G(x^0,\xi^0)$, and $\mu_0\le 1$, one has
\begin{align}
&\E_{\xi^0}[P_0] = \phi_{\mu_0}(x^0) + \E_{\xi^0}[(\|\overline{m}^0 - \nabla f(x^0)\|_{x^0}^*)^2]/L_1 \le f(x^0) +  [f(x^0) + B(x^0)]_+ + \sigma^2/L_1, \nonumber\\
&\E_{\{\xi^k\}_{k=0}^K}[P_K] = \phi_{\mu_K}(x^K) + \E_{\{\xi^k\}_{k=0}^K}[(\|\overline{m}^K- \nabla f(x^K)\|_{x^K}^*)^2]/L_1\overset{\cref{to-lwbd-phimu}}{\ge} (1+\mu_K)\phi_{\mathrm{low}}.\nonumber
\end{align}
By summing \cref{desc-pm-oneiter} over $k=0,\ldots,K-1$, and using the above two inequalities, \cref{lwbd-fb}, and the fact that $\{\eta_k\}_{k\ge0}$ is nonincreasing, we obtain that
\begin{align*}
\sum_{k=0}^{K-1} \E[\|\nabla\phi_{\mu_k}(x^k) + A^T{\lambda}^k\|_{x^k}^*] 
\le \frac{\Delta(x^0) + \sigma^2/L_1}{\eta_{K-1}} + \frac{1}{\eta_{K-1}}\sum_{k=0}^{K-1}\left(\frac{L_\phi}{2}\eta_k^2 + \frac{5L_1\eta_k^2}{\alpha_k} + \frac{\sigma^2\gamma_k^2}{L_{1}}\right).
\end{align*}
Hence, the conclusion of this theorem holds.    
\end{proof}

\begin{proof}[\textbf{Proof of \cref{cor:order-pm}}]
Observe from \cref{etak-Bk-pm} that $\gamma_k>\eta_k$ for all $k\ge0$. Thus, $\{(\eta_k,\gamma_k)\}_{k\ge0}$ defined as in  \cref{etak-Bk-pm} satisfies the assumption on $\{(\eta_k,\gamma_k)\}_{k\ge0}$ in \cref{thm:converge-rate-ipmpm}. Substituting \cref{etak-Bk-pm} and \cref{ineq:lwbd-ak-pm} into \cref{ineq:ave-stat-pm}, we obtain for all $K\ge3$,
\begin{align}
&\sum_{k=0}^{K-1}\E[\|\nabla \phi_{\mu_k}(x^k) + A^T\lambda^k\|^*_{x^k}]\nonumber\\
&\overset{\cref{ineq:ave-stat-pm}\cref{etak-Bk-pm} \cref{ineq:lwbd-ak-pm}}{\le} K^{3/4} \Big(\frac{\Delta(x^0) + \sigma^2/L_1}{s_\eta} + \sum_{k=0}^{K-1}\Big(\frac{s_\eta L_\phi}{2(k+1)^{3/2}} + \frac{5s_\eta L_1/(1-s_\eta) + \sigma^2/(s_\eta L_{1})}{k+1}\Big)\Big)\nonumber\\
&< K^{3/4}\Big(\frac{\Delta(x^0) + \sigma^2/L_1}{s_{\eta}}  + \frac{3s_\eta L_\phi}{2} + \Big(\frac{5s_\eta L_1}{1-s_\eta}+\frac{\sigma^2}{s_{\eta}L_1}\Big)\ln(2K+1)\Big)\nonumber\\
&\le \Big(\frac{\Delta(x^0) + \sigma^2/L_1}{s_{\eta}}  + \frac{3s_\eta L_\phi}{2} + 2\Big(\frac{5s_\eta L_1}{1-s_\eta}+\frac{\sigma^2}{s_{\eta}L_1}\Big)\Big){K^{3/4}}\ln K \overset{\cref{def:kpm}}{=} M_{\rmpm}K^{3/4}\ln K/2,
\label{ineq:ave-stat-pm-proof}
\end{align}
where the second inequality is because $\sum_{k=0}^{K-1}1/(k+1)^{3/2}\le2\sqrt{2}<3$ and $\sum_{k=0}^{K-1}1/(k+1)\le\ln(2K+1)$  due to \cref{upbd:series-ka} with $(a,b)=(1,K)$ and $\alpha=3/2,1$, and the last inequality is due to $1\le\ln K$ and $\ln(2K+1)\le 2\ln K$ given that $K\ge3$. Since $\kappa(K)$ is uniformly drawn from $\{\lfloor K/2\rfloor,\ldots,K-1\}$, we have that for all $K\ge3$,
\begin{align}
&\E[\|\nabla \phi_{\mu_{\kappa(K)}}(x^{\kappa(K)}) + A^T\lambda^{\kappa(K)}\|_{x^{\kappa(K)}}^*]=\frac{1}{K-\lfloor K/2\rfloor}\sum_{k=\lfloor K/2\rfloor}^{K-1}\E[\|\nabla \phi_{\mu_k}(x^k) + A^T\lambda^k\|_{x^k}^*]\nonumber\\
&\le \frac{2}{K}\sum_{k=0}^{K-1}\E[\|\nabla \phi_{\mu_k}(x^k) + A^T\lambda^k\|_{x^k}^*]\overset{\cref{ineq:ave-stat-pm-proof}}{\le}M_{\rmpm}K^{-1/4}\ln K.\label{upbd-rd-para-pm}
\end{align}
By \cref{lem:rate-complexity} with $(\beta,u,v)=(1/4,\epsilon/(8M_{\mathrm{pm}}(1+\sqrt{\vartheta})),K)$, one can see that 
\begin{align*}
K^{-1/4}\ln K\le \frac{\epsilon}{M_{\mathrm{pm}}(1+\sqrt{\vartheta})}\qquad\forall K\ge \Big(\frac{8M_{\mathrm{pm}}(1+\sqrt{\vartheta})}{\epsilon}\ln\Big(\frac{8M_{\mathrm{pm}}(1+\sqrt{\vartheta})}{\epsilon}\Big)\Big)^4,   
\end{align*}
which together with \cref{upbd-rd-para-pm} implies that
\begin{align}
&\E[\|\nabla \phi_{\mu_{\kappa(K)}}(x^{\kappa(K)}) + A^T\lambda^{\kappa(K)}\|_{x^{\kappa(K)}}^*]\le \frac{\epsilon}{1+\sqrt{\vartheta}}\nonumber\\
&\forall K\ge \max\Big\{\Big(\frac{8M_{\mathrm{pm}}(1+\sqrt{\vartheta})}{\epsilon}\ln\Big(\frac{8M_{\mathrm{pm}}(1+\sqrt{\vartheta})}{\epsilon}\Big)\Big)^4,3\Big\}.    \label{inter-cmplx-pm}
\end{align}
On the other hand, when $K\ge2((1+\sqrt{\vartheta})/\epsilon)^4$, by the definition of $\{\mu_k\}_{k\ge0}$ in \cref{etak-Bk-pm} and the fact that $\kappa(K)$ is uniformly selected from $\{\lfloor K/2\rfloor,\ldots,K-1\}$, one has that $\mu_{\kappa(K)}=\mu_{\lfloor K/2\rfloor}=\epsilon/(1+\sqrt{\vartheta})$. Combining this with \cref{inter-cmplx-pm}, we obtain that \cref{complexity-pm} holds as desired, and the proof of this theorem is complete.
\end{proof}

\subsection{Proof of the main results in \cref{subsec:sipm-em}}\label{subsec:proof-em}

\begin{proof}[\textbf{Proof of \cref{lem:feas-em}}]
Recall from \cref{alg:unf-sipm} and \cref{sipm-em-zk} that $x^0=x^{-1}\in\Omega^\circ$. Let $\eta_{-1}=0$. By these and \cref{lem:int-frame}, we have that $\|x^{k+1}-x^k\|_{x^k}=\eta_k$ and $x^k\in\Omega^\circ$ hold for all $k\ge-1$. Fix an arbitrary $k\ge-1$, we next prove $z^{k+1}\in\Omega^\circ$. In view of $\|x^{k+1}-x^k\|_{x^k}=\eta_k$ and \cref{sipm-em-over-mk-1}, we observe that $\|z^{k+1} - x^k\|_{x^k} = \eta_k/\gamma_k \le s_\eta<1$, which together with \cref{lem:tech-barrier}(ii) implies that $z^{k+1}\in\rmint\gK$. In addition, by \cref{sipm-em-over-mk-1} and $x^{k+1},x^k\in\Omega^\circ$, one has that $Az^{k+1} = A x^{k+1} + [(1-\gamma_k)/\gamma_k]A(x^{k+1}-x^k) = b$. Hence, $z^{k+1}\in\Omega^\circ$, which completes the proof.
\end{proof}

\begin{proof}[\textbf{Proof of \cref{lem:em-estimate}}]
Fix any $k\ge0$. Observe from \cref{sipm-em-over-mk-1} that $z^{k+1}-x^k = (x^{k+1} - x^k)/\gamma_k$. Recall from \cref{lem:int-frame} that $\|x^{k+1}-x^k\|_{x^k}=\eta_k$. It then follows that $\|z^{k+1}-x^k\|_{x^k}=\eta_k/\gamma_k$. In addition, using the update of $\overline{m}^{k+1}$ in \cref{sipm-em-over-mk-1}, we obtain that
\begin{align}
\overline{m}^{k+1} - \nabla f (x^{k+1}) &= (1-\gamma_k)(\overline{m}^k - \nabla f (x^k)) + \gamma_k (G(z^{k+1},\xi^{k+1}) - \nabla f(z^{k+1}))\nonumber\\
&\qquad - (\nabla f(x^{k+1})-\nabla f(x^k) - \nabla^2 f(x^k)(x^{k+1}-x^k))\nonumber\\
&\qquad + \gamma_k(\nabla f(z^{k+1})-\nabla f(x^k) - \nabla^2 f(x^k)(z^{k+1} -x^k)).\label{ob-em-m}
\end{align}
It then follows that
\begin{align}\label{long-derive}
&\E_{\xi^{k+1}}[(\|\overline{m}^{k+1} - \nabla f(x^{k+1})\|_{x^{k+1}}^*)^2] \le \frac{1}{(1-\eta_k)^2} \E_{\xi^{k+1}}[(\|\overline{m}^{k+1} - \nabla f(x^{k+1})\|_{x^k}^*)^2]\nonumber\\
&\overset{\cref{ob-em-m}}{=} \frac{\gamma_k^2\E_{\xi^{k+1}}[(\|G(z^{k+1},\xi^{k+1}) - \nabla f(z^{k+1})\|_{x^k}^*)^2]}{(1-\eta_k)^2} \nonumber\\
&\qquad + \frac{1}{(1-\eta_k)^2}(\|(1-\gamma_k)(\overline{m}^k- \nabla f(x^k))- (\nabla f(x^{k+1})-\nabla f(x^k) - \nabla^2 f(x^k)(x^{k+1}-x^k))\nonumber\\
&\qquad\qquad\qquad\qquad\qquad + \gamma_k(\nabla f(z^{k+1})-\nabla f(x^k) - \nabla^2 f(x^k)(z^{k+1} -x^k))\|_{x^{k}}^*)^2 \nonumber\\
&\le \frac{\gamma_k^2\E_{\xi^{k+1}}[(\|G(z^{k+1},\xi^{k+1}) - \nabla f(z^{k+1})\|_{z^{k+1}}^*)^2]}{(1-\eta_k)^2(1-\eta_k/\gamma_k)^2}+ (1-\alpha_k)^2(1+a)(\|\overline{m}^k - \nabla f(x^k)\|_{x^k}^*)^2 \nonumber\\
&\qquad + \frac{2(1+1/a)}{(1-\eta_k)^2}(\|\nabla f(x^{k+1})-\nabla f(x^k) - \nabla^2 f(x^k)(x^{k+1}-x^k)\|_{x^{k}}^*)^2\nonumber\\
&\qquad + \frac{2(1+1/a)\gamma_k^2}{(1-\eta_k)^2}(\|\nabla f(z^{k+1})-\nabla f(x^k) - \nabla^2 f(x^k)(z^{k+1} -x^k)\|_{x^{k}}^*)^2,
\end{align}
where the first inequality follows from \cref{lem:tech-barrier}(iii) and $\|x^{k+1} - x^k\|_{x^k}= \eta_k$, the equality follows from \cref{ob-em-m} and the first relation in \cref{asp:unbias-boundvar}, and the second inequality is due to $\|u+v\|^2\le(1+a)\|u\|^2+(1+1/a)\|v\|^2$ for all $u,v\in\R^n$ and $a>0$, \cref{lem:tech-barrier}(iii), and $\|z^{k+1} - x^k\|_{x^k}=\eta_k/\gamma_k$. In addition, using \cref{lem:2nd-smth-desc}, $\|x^{k+1} - x^k\|_{x^k}=\eta_k$, and $\|z^{k+1} - x^k\|_{x^k}=\eta_k/\gamma_k\le s_\eta$, we obtain that
\begin{align*}
&\|\nabla f(x^{k+1})-\nabla f(x^k) - \nabla^2 f(x^k)(x^{k+1}-x^k)\|_{x^{k}}^*\le L_2\|x^{k+1} - x^k\|_{x^k}^2/2\le L_2\eta_k^2/{2},\\
&\|\nabla f(z^{k+1})-\nabla f(x^k) - \nabla^2 f(x^k)(z^{k+1} -x^k)\|_{x^{k}}^*\le L_2\|z^{k+1} - x^k\|_{x^k}^2/2 \le L_2\eta_k^2/(2\gamma_k^2).
\end{align*}
In view of \cref{long-derive}, the above two inequalities, and the second relation in \cref{asp:unbias-boundvar}, and letting $a=\alpha_k/(1-\alpha_k)$, we obtain that
\begin{align*}
&\E_{\xi^{k+1}}[(\|\overline{m}^{k+1} - \nabla f (x^{k+1})\|_{x^{k+1}}^*)^2] \\
&\le \frac{\sigma^2\gamma_k^2}{(1-\eta_k)^2(1-\eta_k/\gamma_k)^2} + (1-\alpha_k)(\|\overline{m}^k - \nabla f(x^k)\|_{x^k}^*)^2  + \frac{L_2^2\eta_k^4}{2(1-\eta_k)^2\alpha_k} + \frac{L_2^2\eta_k^4}{2(1-\eta_k)^2\gamma_k^2\alpha_k}.
\end{align*}
This along with $\gamma_k\le 1$ and
the fact that $\{\eta_k\}_{k\ge0}$ is nonincreasing implies that this lemma holds.
\end{proof}

\begin{proof}[\textbf{Proof of \cref{thm:em-stat-ave}}]
For convenience, we define the following potentials:
\begin{align}\label{P-em-k}
P_k\doteq \phi_{\mu_k}(x^k) + p_k(\|\overline{m}^k -\nabla f(x^k)\|_{x^k}^*)^2\qquad \forall k\ge0.
\end{align}
Recall from Algorithm \ref{alg:unf-sipm} that $\{\mu_k\}$ is nonincreasing. By these, \cref{def:lwbd-phi}, \cref{ineq:var-recur-em}, and \cref{ineq:general-descent}, one has that for all $k\ge0$,
\begin{align}
&\E_{\xi^{k+1}}[P_{k+1}] \overset{\cref{P-em-k}}{=} \E_{\xi^{k+1}}[\phi_{\mu_{k+1}}(x^{k+1})+p_{k+1}(\|\overline{m}^{k+1} - \nabla f(x^{k+1})\|_{x^{k+1}}^*)^2]\nonumber\\ 
&= (\mu_{k+1}-\mu_k)(f(x^{k+1})+B(x^{k+1})) + \E_{\xi^{k+1}}[\phi_{\mu_k}(x^{k+1})+p_{k+1}(\|\overline{m}^{k+1} - \nabla f(x^{k+1})\|_{x^{k+1}}^*)^2]\nonumber\\
&\overset{\cref{def:lwbd-phi}\cref{ineq:var-recur-em}\cref{ineq:general-descent}}{\le} (\mu_{k+1}-\mu_k)\phi_{\mathrm{low}} + \phi_{\mu_k}(x^k) - \eta_k \|\nabla\phi_{\mu_k}(x^k) + A^T{\lambda}^k\|_{x^k}^* + 4\eta_k\|\nabla f(x^k) - \overline{m}^k\|_{x^k}^* + \frac{L_{\phi}}{2}\eta_k^2\nonumber\\
& +(1-\alpha_k)p_{k+1}(\|\overline{m}^k - \nabla f(x^k)\|_{x^k}^*)^2 +  \frac{L_2^2\eta_k^4p_{k+1}}{(1-\eta_0)^2\gamma_k^2\alpha_k} + \frac{\sigma^2\gamma_k^2p_{k+1}}{(1-\eta_0)^2(1-\eta_k/\gamma_k)^2}\nonumber\\
&\le (\mu_{k+1}-\mu_k)\phi_{\mathrm{low}} + \phi_{\mu_k}(x^k) - \eta_k \|\nabla\phi_{\mu_k}(x^k) + A^T{\lambda}^k\|_{x^k}^* + 4\eta_k\|\nabla f(x^k) - \overline{m}^k\|_{x^k}^* + \frac{L_{\phi}}{2}\eta_k^2\nonumber\\
&+ (1-\alpha_k/2)p_{k}(\|\overline{m}^k - \nabla f(x^k)\|_{x^k}^*)^2 +  \frac{L_2^2\eta_k^4p_{k+1}}{(1-\eta_0)^2\gamma_k^2\alpha_k} + \frac{\sigma^2\gamma_k^2p_{k+1}}{(1-\eta_0)^2(1-\eta_k/\gamma_k)^2},\nonumber
\end{align}
where the second inequality follows from $(1-\alpha_k)p_{k+1}\le (1-\alpha_k/2)p_{k}$. In addition, note that $4\eta_k\|\nabla f(x^k) - \overline{m}^k\|_{x^k}^* \le \alpha_kp_{k}(\|\overline{m}^k - \nabla f(x^k)\|_{x^k}^*)^2/2 + 8\eta_k^2/(p_k\alpha_k)$, which together with \cref{P-em-k} and the above inequality implies that for all $k\ge0$,
\begin{align}
\E_{\xi^{k+1}}[P_{k+1}] 
&\overset{\cref{P-em-k}}{\le}(\mu_{k+1}-\mu_k)\phi_{\mathrm{low}} + P_k - \eta_k \|\nabla\phi_{\mu_k}(x^k) + A^T {\lambda}^k\|_{x^k}^* \nonumber\\
&\qquad + \frac{L_{\phi}}{2}\eta_k^2 + \frac{8\eta_k^2}{p_k\alpha_k} +  \frac{L_2^2\eta_k^4p_{k+1}}{(1-\eta_0)^2 \gamma_k^2\alpha_k} + \frac{\sigma^2\gamma_k^2p_{k+1}}{(1-\eta_0)^2(1-\eta_k/\gamma_k)^2}.\label{desc-em-oneiter}
\end{align}
On the other hand, by \cref{asp:unbias-boundvar}, \cref{to-lwbd-phimu}, \cref{P-em-k}, $\overline{m}^0=G(x^0,\xi^0)$, and the fact that $\mu_0\le 1$, one has 
\begin{align*}
&\E_{\xi^0}[P_0] = \phi_{\mu_0}(x^0) + p_0\E_{\xi^0}[(\|\overline{m}^0 - \nabla f(x^0)\|_{x^0}^*)^2] \le f(x^0) + [f(x^0)+B(x^0)]_+ + p_0\sigma^2,\\
&\E_{\{\xi^k\}_{k=0}^K}[P_K] = \phi_{\mu_K}(x^K) + p_K\E_{\{\xi^k\}_{k=0}^K}[(\|\overline{m}^K- \nabla f(x^K)\|_{x^K}^*)^2]\overset{\cref{to-lwbd-phimu}}{\ge} (1+\mu_K)\phi_{\mathrm{low}}.
\end{align*}
By summing \cref{desc-em-oneiter} over $k=0,\ldots,K-1$, and using the above two inequalities, \cref{lwbd-fb}, and the fact that $\{\eta_k\}_{k\ge0}$ is nonincreasing, we obtain that
\begin{align*}
&\sum_{k=0}^{K-1} \E[\|\nabla\phi_{\mu_k}(x^k) + A^T{\lambda}^k\|_{x^k}^*]\le\frac{\Delta(x^0) + p_0\sigma^2}{\eta_{K-1}}\\
&\qquad + \frac{1}{\eta_{K-1}} \sum_{k=0}^{K-1}\left(\frac{L_\phi}{2}\eta_k^2 + \frac{8\eta_k^2}{p_k\alpha_k} +  \frac{L_2^2\eta_k^4p_{k+1}}{(1-\eta_0)^2\gamma_k^2\alpha_k} + \frac{\sigma^2\gamma_k^2p_{k+1}}{(1-\eta_0)^2(1-\eta_k/\gamma_k)^2}\right).
\end{align*}
Hence, the conclusion of this theorem holds as desired.
\end{proof}

\begin{proof}[\textbf{Proof of \cref{lem:tech-ap-use}}]
It follows from the definition of $\{(\eta_k,\gamma_k,\alpha_k)\}_{k\ge0}$ that
\begin{align*}
\alpha_k = \frac{(k+1)^{1/7} - 5s_\eta/7}{(k+1)^{5/7} - 5s_\eta/7} >  \frac{1-5s_\eta/(7(k+1)^{1/7})}{(k+1)^{4/7}} \ge \frac{1- 5s_\eta/7}{(k+1)^{4/7}}\qquad\forall k\ge0,
\end{align*}
where the first inequality is due to $s_\eta\in(0,1)$. We next prove $(1 - \alpha_k) p_{k+1}\le (1-\alpha_k/2)p_k$ for all $k\ge0$. By the definition of $\{(\eta_k,\gamma_k,\alpha_k)\}_{k\ge0}$, one has for all $k\ge0$ that
\begin{align*}
&\frac{1-\alpha_k/2}{1-\alpha_k}= 1 + \frac{(1 - 5s_\eta/(7(k+1)^{1/7}))/2}{(k+1)^{4/7}-1} > 1 + \frac{(1 - 5s_\eta/7)/2}{(k+1)^{4/7}} > 1 + \frac{1}{7(k+1)^{4/7}},
\end{align*}
where the inequalities are due to $s_\eta\in(0,1)$ and $k\ge0$. In addition, recall from \cref{def:ap-em} that ${p_{k+1}}/{p_{k}} = (1+1/(k+1))^{1/7} \le 1 + 1/(7(k+1))$ for all $k\ge0$, where the second inequality is due to $(1+a)^r\le 1+ar$ for all $a,r\in[0,1]$. Combining the above two inequalities with the fact that $k+1\ge(k+1)^{4/7}$ for all $k\ge0$, we obtain that $(1 - \alpha_k) p_{k+1}\le (1-\alpha_k/2)p_k$ holds for all $k\ge0$. Hence, this lemma holds. 
\end{proof}

\begin{proof}[\textbf{Proof of \cref{cor:order-em}}]
Recall from \cref{etak-Bk-em} that $\eta_k/\gamma_k< s_\eta$ for all $k\ge0$. Thus, $\{(\eta_k,\gamma_k)\}_{k\ge0}$ defined in \cref{etak-Bk-em} satisfies the assumption on $\{(\eta_k,\gamma_k)\}_{k\ge0}$ in \cref{thm:em-stat-ave}. Notice from \cref{lem:tech-ap-use} that $\{p_{k}\}_{k\ge0}$ defined in \cref{def:ap-em} and $\{\alpha_k\}_{k\ge0}$ defined in \cref{thm:em-stat-ave} satisfy the assumption on $\{(\alpha_k,p_k)\}_{k\ge0}$ in \cref{thm:em-stat-ave}. Substituting \cref{etak-Bk-em}, \cref{def:ap-em}, and $\alpha_k\ge (1-5s_\eta/7)/(k+1)^{4/7}$ (see \cref{lem:tech-ap-use}) into \cref{ineq:ave-stat-em}, we obtain for all $K\ge3$,
\begin{align}
&\sum_{k=0}^{K-1}\E[\|\nabla \phi_{\mu_k}(x^k) + A^T\lambda^k\|^*_{x^k}]\nonumber\\
&\le \frac{\Delta(x^0) + p_0\sigma^2}{\eta_{K-1}} + \frac{1}{\eta_{K-1}} \sum_{k=0}^{K-1}\left(\frac{L_\phi}{2}\eta_k^2 + \frac{8\eta_k^2}{p_k\alpha_k} +  \frac{2L_2^2\eta_k^4p_k}{(1-\eta_0)^2\gamma_k^2\alpha_k} + \frac{2\sigma^2\gamma_k^2p_k}{(1-\eta_0)^2(1-\eta_k/\gamma_k)^2}\right)\nonumber\\
&{\le}\frac{7K^{5/7}}{5}\bigg(\frac{\Delta(x^0) + \sigma^2}{s_\eta} + \sum_{k=0}^{K-1}\Big(\frac{25s_\eta L_\phi}{98(k+1)^{10/7}} \nonumber\\
&\qquad +\frac{200s_\eta/(7(7-5s_\eta)) + 1250 L_2^2s_\eta^3/(7(7-5s_\eta)^3) + 2\sigma^2/(s_\eta(1-5s_\eta/7)^4)}{k+1}\Big)\bigg)\nonumber\\
&\le \frac{7K^{5/7}}{5}\bigg(\frac{\Delta(x^0) + \sigma^2}{s_\eta} + \frac{40s_\eta L_\phi}{49} + \Big(\frac{200s_\eta}{7(7-5s_\eta)} +\frac{1250 L_2^2s_\eta^3}{7(7-5s_\eta)^3} + \frac{2\sigma^2}{s_\eta(1-5s_\eta/7)^4}\Big)\ln(2K+1)\bigg)\nonumber\\
&\le \frac{7}{5}\bigg(\frac{\Delta(x^0) + \sigma^2}{s_\eta} + \frac{40s_\eta L_\phi}{49} + 2\Big(\frac{200s_\eta}{7(7-5s_\eta)} +\frac{1250 L_2^2s_\eta^3}{7(7-5s_\eta)^3} + \frac{2\sigma^2}{s_\eta(1-5s_\eta/7)^4}\Big)\bigg)K^{5/7}\ln K\nonumber\\
&\overset{\cref{def:kem}}{=}M_{\mathrm{em}}K^{5/7}\ln K/2,\label{ineq:ave-stat-em-proof}
\end{align}
where the first inequality follows from \cref{ineq:ave-stat-em} and the fact that $p_{k+1}\le 2p_k$ for any $k\ge0$, the second inequality is due to \cref{etak-Bk-em}, \cref{def:ap-em}, and $\alpha_k\ge (1-5s_\eta/7)/(k+1)^{4/7}$ for all $k\ge0$, the third inequality follows from $\sum_{k=0}^{K-1}1/(k+1)^{10/7}\le(7/3)2^{3/7}<16/5$ and $\sum_{k=0}^{K-1}1/(k+1)\le\ln(2K+1)$ due to \cref{upbd:series-ka} with $(a,b)=(1,K)$ and $\alpha=10/7,1$, and the last inequality is due to $1\le\ln K$ and $\ln(2K+1)\le 2\ln K$ given that $K\ge3$. Since $\kappa(K)$ is uniformly drawn from $\{\lfloor K/2\rfloor,\ldots,K-1\}$, we have that for all $K\ge3$,
\begin{align}
&\E[\|\nabla \phi_{\mu_{\kappa(K)}}(x^{\kappa(K)}) + A^T\lambda^{\kappa(K)}\|_{x^{\kappa(K)}}^*]=\frac{1}{K-\lfloor K/2\rfloor}\sum_{k=\lfloor K/2\rfloor}^{K-1}\E[\|\nabla \phi_{\mu_k}(x^k) + A^T\lambda^k\|_{x^k}^*]\nonumber\\
&\le\frac{2}{K}\sum_{k=0}^{K-1}\E[\|\nabla \phi_{\mu_k}(x^k) + A^T\lambda^{k}\|_{x^k}^*] \overset{\cref{ineq:ave-stat-em-proof}}{\le} M_{\mathrm{em}}{K^{-2/7}}\ln K.
\label{upbd-rd-para-em}
\end{align}
By \cref{lem:rate-complexity} with $(\beta,u,v)=(2/7,\epsilon/(7M_{\mathrm{em}}(1+\sqrt{\vartheta})),K)$, one can see that 
\begin{align*}
K^{-2/7}\ln K\le \frac{\epsilon}{M_{\mathrm{em}}(1+\sqrt{\vartheta})}\qquad\forall K\ge \Big(\frac{7M_{\mathrm{em}}(1+\sqrt{\vartheta})}{\epsilon}\ln\Big(\frac{7M_{\mathrm{em}}(1+\sqrt{\vartheta})}{\epsilon}\Big)\Big)^{7/2},   
\end{align*}
which together with \cref{upbd-rd-para-em} implies that
\begin{align}
&\E[\|\nabla \phi_{\mu_{\kappa(K)}}(x^{\kappa(K)}) + A^T\lambda^{\kappa(K)}\|_{x^{\kappa(K)}}^*]\le \frac{\epsilon}{1+\sqrt{\vartheta}}\nonumber\\
&\forall K\ge \max\Big\{\Big(\frac{7M_{\mathrm{em}}(1+\sqrt{\vartheta})}{\epsilon}\ln\Big(\frac{7M_{\mathrm{em}}(1+\sqrt{\vartheta})}{\epsilon}\Big)\Big)^{7/2},3\Big\}.    \label{inter-cmplx-em}
\end{align}
On the other hand, when $K\ge2((1+\sqrt{\vartheta})/\epsilon)^{7/2}$, by the definition of $\{\mu_k\}_{k\ge0}$ in \cref{etak-Bk-em} and the fact that $\kappa(K)$ is uniformly selected from $\{\lfloor K/2\rfloor,\ldots,K-1\}$, one has that $\mu_{\kappa(K)}=\mu_{\lfloor K/2\rfloor}=\epsilon/(1+\sqrt{\vartheta})$. Combining this with \cref{inter-cmplx-em}, we obtain that \cref{complexity-em} holds as desired, and the proof of this theorem is complete. 
\end{proof}

\subsection{Proof of the main results in \cref{subsec:sipm-rm}}\label{subsec:proof-rm}

\begin{proof}[\textbf{Proof of \cref{lem:rm-var-err}}]
Fix any $k\ge0$. Recall from \cref{lem:int-frame} that $\|x^{k+1}-x^k\|_{x^k}=\eta_k$. Using this and \cref{sipm-rm-mk-up}, we have that
\begin{align}
&\E_{\xi^{k+1}}[(\|\overline{m}^{k+1} - \nabla f(x^{k+1})\|_{x^{k+1}}^*)^2] \le \frac{1}{(1-\eta_k)^2}\E_{\xi^{k+1}}[(\|\overline{m}^{k+1} - \nabla f(x^{k+1})\|_{x^{k}}^*)^2]\nonumber\\
& \overset{\cref{sipm-rm-mk-up}}{=} \frac{1}{(1-\eta_k)^2}\E_{\xi^{k+1}}[(\|G(x^{k+1},\xi^{k+1}) + (1-\gamma_k) (\overline{m}^k - G(x^k,\xi^{k+1})) - \nabla f(x^{k+1})\|_{x^{k}}^*)^2]\nonumber\\
& = (1-\alpha_k)^2(\|\overline{m}^k-\nabla f(x^k)\|^*_{x^{k}})^2\nonumber\\
&\qquad + \frac{1}{(1-\eta_k)^2} \E_{\xi^{k+1}}[(\|G(x^{k+1},\xi^{k+1}) - \nabla f(x^{k+1}) + (1-\gamma_k) (\nabla f(x^k) - G(x^k,\xi^{k+1}))\|_{x^{k}}^*)^2],\label{ineq:rm-error-upbd-1} 
\end{align}
where the first inequality is due to \cref{lem:tech-barrier}(iii) and $\|x^{k+1}-x^k\|_{x^k}=\eta_k$, and the second equality follows from the first relation in \cref{asp:unbias-boundvar} and the definition of $\alpha_k$. Also, observe that
\begin{align*}
&\E_{\xi^{k+1}}[(\|G(x^{k+1},\xi^{k+1}) - \nabla f(x^{k+1}) + (1-\gamma_k) (\nabla f(x^k) - G(x^k,\xi^{k+1}))\|_{x^k}^*)^2]\\
&\le 3(\|\nabla f(x^{k+1}) - \nabla f(x^k)\|_{x^{k}}^*)^2 + 3\E_{\xi^{k+1}}[(\|G(x^{k+1},\xi^{k+1}) - G(x^k,\xi^{k+1})\|_{x^{k}}^*)^2] \\
&\qquad +  3\gamma_k^2\E_{\xi^{k+1}}[(\|\nabla f(x^k) - G(x^k,\xi^{k+1})\|_{x^{k}}^*)^2]\Big)\\
&\le 3(L_1^2+L^2)\|x^{k+1}-x^k\|_{x^k}^2 + 3\gamma_k^2\E_{\xi^{k+1}}[(\|\nabla f(x^k) - G(x^k,\xi^{k+1})\|_{x^{k}}^*)^2]\\
&= 3(L_1^2+L^2)\eta_k^2 + 3\gamma_k^2\E_{\xi^{k+1}}[(\|\nabla f(x^k) - G(x^k,\xi^{k+1})\|_{x^{k}}^*)^2] \le 3(L_1^2+L^2)\eta_k^2 + 3\sigma^2\gamma_k^2,
\end{align*}
where {the first inequality is due to $\|a+b+c\|^2\le3\|a\|^2+3\|b\|^2+3\|c\|^2$ for all $a,b,c\in\R^n$,} the second inequality is due to \cref{ineq:1st-Lip} and \cref{ineq:Lip-average}, the first equality is due to $\|x^{k+1}-x^k\|_{x^k}=\eta_k$, and the last inequality follows from the second relation in \cref{asp:unbias-boundvar}. Combining \cref{ineq:rm-error-upbd-1} with the above inequality, we obtain that
\[
\E_{\xi^{k+1}}[(\|\overline{m}^{k+1} - \nabla f(x^{k+1})\|_{x^{k+1}}^*)^2] \le (1-\alpha_k)^2(\|\overline{m}^k-\nabla f(x^k)\|^*_{x^{k}})^2 + \frac{3(L_1^2+L^2)\eta_k^2 + 3\sigma^2\gamma_k^2}{(1-\eta_k)^2}.
\]
Notice from $\gamma_k>\eta_k$ and the definition of $\alpha_k$ that $\alpha_k\in(0,1)$. Using this, the above relation, and the fact that $\{\eta_k\}_{k\ge0}$ is nonincreasing, we obtain that this lemma holds as desired.    
\end{proof}


\begin{proof}[\textbf{Proof of \cref{thm:converge-rate-ipmrm}}]
For convenience, we construct potentials as
\begin{align}\label{def:potent-rm}
P_k = \phi_{\mu_k}(x^k) + p_k(\|\overline{m}^k-\nabla f(x^k)\|_{x^k}^*)^2 \qquad \forall k\ge0.
\end{align}
Recall from Algorithm \ref{alg:unf-sipm} that $\{\mu_k\}$ is nonincreasing. Using these, \cref{def:lwbd-phi}, \cref{ineq:var-recur-rm}, and \cref{ineq:general-descent}, we obtain that for all $k\ge0$,
\begin{align*}
&\E_{\xi^{k+1}}[P_{k+1}] \overset{\cref{def:potent-rm}}{=} \E_{\xi^{k+1}}[\phi_{\mu_{k+1}}(x^{k+1}) + p_{k+1}(\|\overline{m}^{k+1} - \nabla f(x^{k+1})\|_{x^{k+1}}^*)^2]\nonumber\\
&= (\mu_{k+1}-\mu_k)(f(x^{k+1})+B(x^{k+1})) + \E_{\xi^{k+1}}[\phi_{\mu_k}(x^{k+1}) + p_{k+1}(\|\overline{m}^{k+1} - \nabla f(x^{k+1})\|_{x^{k+1}}^*)^2]\nonumber\\
&\overset{\cref{def:lwbd-phi}\cref{ineq:var-recur-rm}\cref{ineq:general-descent}}{\le} (\mu_{k+1}-\mu_k)\phi_{\mathrm{low}} +  \phi_{\mu_k}(x^k) - \eta_k \|\nabla\phi_{\mu_k}(x^k) + A^T{\lambda}^k\|_{x^k}^* + 4\eta_k\|\nabla f(x^k) - \overline{m}^k\|_{x^k}^* + \frac{L_{\phi}}{2}\eta_k^2\\
&\qquad +  (1-\alpha_k)p_{k+1}(\|\nabla f(x^k) - \overline{m}^k\|_{x^k}^*)^2 + \frac{3(L_1^2+L^2)\eta_k^2p_{k+1} + 3\sigma^2\gamma_k^2p_{k+1}}{(1-\eta_0)^2}\nonumber\\
&\le (\mu_{k+1}-\mu_k)\phi_{\mathrm{low}} +  \phi_{\mu_k}(x^k) - \eta_k \|\nabla\phi_{\mu_k}(x^k) + A^T{\lambda}^k\|_{x^k}^* + 4\eta_k\|\nabla f(x^k) - \overline{m}^k\|_{x^k}^* + \frac{L_{\phi}}{2}\eta_k^2\\
&\qquad + (1-\alpha_k/2)p_k(\|\nabla f(x^k) - \overline{m}^k\|_{x^k}^*)^2 + \frac{3(L_1^2+L^2)\eta_k^2p_{k+1} + 3\sigma^2\gamma_k^2p_{k+1}}{(1-\eta_0)^2},
\end{align*}
where the second inequality follows from $(1-\alpha_k)p_{k+1}\le (1-\alpha_k/2)p_k$. In addition, notice that $4\eta_k\|\nabla f(x^k) - \overline{m}^k\|_{x^k}^*\le \alpha_kp_k(\|\nabla f(x^k) - \overline{m}^k\|_{x^k}^*)^2/2 + 8\eta_k^2/(p_k\alpha_k)$, which together with \cref{def:potent-rm} and the above inequality implies that for all $k\ge0$, 
\begin{align}
&\E_{\xi^{k+1}}[P_{k+1}]\overset{\cref{def:potent-rm}}{\le} (\mu_{k+1}-\mu_k)\phi_{\mathrm{low}} + P_k - \eta_k \|\nabla\phi_{\mu_k}(x^k) + A^T{\lambda}^k\|_{x^k}^*\nonumber\\
&\qquad\qquad\qquad\qquad + \frac{L_{\phi}}{2}\eta_k^2 + \frac{8\eta_k^2}{p_k\alpha_k}+ \frac{3(L_1^2+L^2)\eta_k^2p_{k+1} + 3\sigma^2\gamma_k^2p_{k+1}}{(1-\eta_0)^2}.\label{desc-rm-oneiter}
\end{align}
On the other hand, by \cref{asp:unbias-boundvar}, \cref{to-lwbd-phimu}, \cref{def:potent-rm}, $\overline{m}^0=G(x^0,\xi^0)$, and $\mu_0\le1$, one has
\begin{align*}
&\E_{\xi^0}[P_0] = \phi_{\mu_0}(x^0) + p_0\E[(\|\overline{m}^0 - \nabla f(x^0)\|_{x^0}^*)^2] \le f(x^0) + [f(x^0) + B(x^0)]_+ + p_0\sigma^2,\\
&\E_{\{\xi^k\}_{k=0}^K}[P_K] = \phi_{\mu_K}(x^K) + p_K\E[(\|\overline{m}^K - \nabla f(x^K)\|_{x^K}^*)^2] \overset{\cref{to-lwbd-phimu}}{\ge} (1+\mu_K)\phi_{\mathrm{low}}.
\end{align*}
By summing \cref{desc-rm-oneiter} over $k=0,\ldots,K-1$, and using the above two inequalities, \cref{lwbd-fb}, and the fact that $\{\eta_k\}_{k\ge0}$ is nonincreasing, we obtain that
\begin{align*}
&\sum_{k=0}^{K-1}\E[\|\nabla \phi_{\mu_k}(x^k) +A^T {\lambda}^k\|^*_{x^k}]
\le \frac{\Delta(x^0) + p_0\sigma^2}{\eta_{K-1}}\\
&\qquad + \frac{1}{\eta_{K-1}}\sum_{k=0}^{K-1}\Big(\frac{L_{\phi}}{2}\eta_k^2 + \frac{8\eta_k^2}{p_k\alpha_k}+ \frac{3(L_1^2+L^2)\eta_k^2p_{k+1} + 3\sigma^2\gamma_k^2p_{k+1}}{(1-\eta_0)^2}\Big).
\end{align*}
Hence, the conclusion of this theorem holds.  
\end{proof}

\begin{proof}[\textbf{Proof of \cref{lem:tech-eta-gma-rm}}]
It follows from the definition of $\{(\eta_k,\gamma_k,\alpha_k)\}_{k\ge0}$ that 
\begin{align*}
\alpha_k= \frac{1-s_\eta/3}{(k+1)^{2/3}-s_\eta/3} > \frac{1-s_\eta/3}{(k+1)^{2/3}} \qquad\forall k\ge0,
\end{align*}
where the inequality is due to $s_\eta\in(0,1)$. We next prove $(1-\alpha_k)p_{k+1}\le(1-\alpha_k/2)p_k$ for all $k\ge0$. By the definition of $\{(\eta_k,\gamma_k,\alpha_k)\}_{k\ge0}$, one has for all $k\ge0$ that 
\[
\frac{1-\alpha_k/2}{1-\alpha_k} = 1 + \frac{(1-s_\eta/3)/2}{(k+1)^{2/3}-1} > 1 + \frac{1}{3(k+1)^{2/3}},
\]
where the first inequality is due to $s_\eta\in(0,1)$ and $k\ge0$. Also, we recall from \cref{def:ap-rm} that $p_{k+1}/p_k=(1+1/(k+1))^{1/3}\le 1+ 1/(3(k+1))$ for all $k\ge0$, where the second inequality is due to $(1+a)^r\le 1+ar$ for all $a,r\in[0,1]$. Combining the above two inequalities with the fact that $k+1\ge(k+1)^{2/3}$, we obtain that $(1-\alpha_k)p_{k+1}\le(1-\alpha_k/2)p_k$ for all $k\ge0$. Hence, this lemma holds as desired.
\end{proof}

\begin{proof}[\textbf{Proof of \cref{cor:order-rm}}]
Recall from \cref{etak-Bk-rm} that $\eta_k< \gamma_k$ for all $k\ge0$. Therefore, $\{(\eta_k,\gamma_k)\}_{k\ge0}$ defined in \cref{etak-Bk-rm} satisfies the assumption on $\{(\eta_k,\gamma_k)\}_{k\ge0}$ in \cref{thm:converge-rate-ipmrm}. Notice from \cref{lem:tech-eta-gma-rm} that $\{\alpha_k\}_{k\ge0}$ defined in \cref{thm:converge-rate-ipmrm} and $\{p_k\}_{k\ge0}$ defined in \cref{def:ap-rm} satisfy the assumption on $\{(\alpha_k,p_k)\}_{k\ge0}$ in \cref{thm:converge-rate-ipmrm}. By substituting \cref{etak-Bk-rm}, \cref{def:ap-rm}, and $\alpha_k\ge(1-s_\eta/3)/(k+1)^{2/3}$ (see \cref{lem:tech-eta-gma-rm}) into \cref{ineq:ave-stat-rm}, one can obtain that for all $K\ge3$,
\begin{align}
&\sum_{k=0}^{K-1}\E[(\|\nabla \phi_{\mu_k}(x^k) + A^T\lambda^k\|^*_{x^k})^2]\nonumber\\
&\le \frac{\Delta(x^0) + p_0\sigma^2}{\eta_{K-1}} + \frac{1}{\eta_{K-1}}\sum_{k=0}^{K-1}\Big(\frac{L_{\phi}}{2}\eta_k^2 + \frac{8\eta_k^2}{p_k\alpha_k}+ \frac{6(L_1^2+L^2)\eta_k^2p_k + 6\sigma^2\gamma_k^2p_k}{(1-\eta_0)^2}\Big)\nonumber\\
&{\le} 3K^{2/3}\bigg(\frac{\Delta(x^0) + \sigma^2}{s_{\eta}}\nonumber\\
&\qquad + \sum_{k=0}^{K-1}\Big(\frac{s_\eta L_\phi}{18(k+1)^{4/3}} + \frac{8s_\eta/(3(3-s_\eta)) + 6(L_1^2+L^2)s_\eta/(3-s_\eta)^2 + 6\sigma^2/(s_\eta(1-s_\eta/3)^2)}{k+1}\Big)\bigg)\nonumber\\
&\le 3K^{2/3}\bigg(\frac{\Delta(x^0) + \sigma^2}{s_{\eta}}  + \frac{2s_\eta L_\phi}{9} + 2\Big(\frac{4s_\eta}{3(3-s_\eta)} + \frac{3(L_1^2+L^2)s_\eta}{(3-s_\eta)^2} + \frac{3\sigma^2}{s_\eta(1-s_\eta/3)^2}\Big)\ln(2K+1) \bigg)\nonumber\\
&\le 3\bigg(\frac{\Delta(x^0) + \sigma^2}{s_{\eta}}  + \frac{2s_\eta L_\phi}{9} + 4\Big(\frac{4s_\eta}{3(3-s_\eta)} + \frac{3(L_1^2+L^2)s_\eta}{(3-s_\eta)^2} + \frac{3\sigma^2}{s_\eta(1-s_\eta/3)^2}\Big) \bigg){K^{2/3}}\ln K\nonumber\\
&\overset{\cref{def:krm}}{=} M_{\rmrm}{K^{2/3}}\ln K/2 ,\label{ineq:ave-stat-rm-proof}
\end{align}
where the first inequality is due to \cref{ineq:ave-stat-rm} and $p_{k+1}\le 2p_k$ for all $k\ge0$, the second inequality follows from \cref{etak-Bk-rm}, \cref{def:ap-rm}, and $\alpha_k\ge(1-s_\eta/3)/(k+1)^{2/3}$ for all $k\ge0$, the third inequality is because $\sum_{k=0}^{K-1}1/(k+1)^{4/3}\le 3(2)^{1/3}<4$ and $\sum_{k=0}^K1/(k+1)\le \ln(2K+1)$ due to \cref{upbd:series-ka} with $(a,b)=(1,K)$ and $\alpha=4/3,1$, and the last inequality follows from $1\le\ln K$, and $\ln(2K+1)\le 2\ln K$ given that $K\ge3$. Since $\kappa(K)$ is uniformly drawn from $\{\lfloor K/2\rfloor,\ldots,K-1\}$, we obtain that for all $K\ge3$,
\begin{align}
&\E[\|\nabla \phi_{\mu_{\kappa(K)}}(x^{\kappa(K)}) + A^T\lambda^{\kappa(K)}\|_{x^{\kappa(K)}}^*]=\frac{1}{K-\lfloor K/2\rfloor}\sum_{k=\lfloor K/2\rfloor}^{K-1}\E[\|\nabla \phi_{\mu_k}(x^k) + A^T\lambda^k\|_{x^k}^*]\nonumber\\
&\le \frac{2}{K}\sum_{k=0}^{K-1}\E[\|\nabla \phi_{\mu_k}(x^k) + A^T\lambda^k\|_{x^k}^*] \overset{\cref{ineq:ave-stat-rm-proof}}{\le} M_{\rmrm}K^{-1/3}\ln K.\label{upbd-rd-para-rm}
\end{align}
By \cref{lem:rate-complexity} with $(\beta,u,v)=(1/3,\epsilon/(6M_{\mathrm{rm}}(1+\sqrt{\vartheta})),K)$, one can see that 
\begin{align*}
K^{-1/3}\ln K\le \frac{\epsilon}{M_{\mathrm{rm}}(1+\sqrt{\vartheta})}\qquad\forall K\ge \Big(\frac{6M_{\mathrm{rm}}(1+\sqrt{\vartheta})}{\epsilon}\ln\Big(\frac{6M_{\mathrm{rm}}(1+\sqrt{\vartheta})}{\epsilon}\Big)\Big)^3,
\end{align*}
which together with \cref{upbd-rd-para-rm} implies that
\begin{align}
&\E[\|\nabla \phi_{\mu_{\kappa(K)}}(x^{\kappa(K)}) + A^T\lambda^{\kappa(K)}\|_{x^{\kappa(K)}}^*]\le \frac{\epsilon}{1+\sqrt{\vartheta}}\nonumber\\
&\forall K\ge \max\Big\{\Big(\frac{6M_{\mathrm{rm}}(1+\sqrt{\vartheta})}{\epsilon}\ln\Big(\frac{6M_{\mathrm{rm}}(1+\sqrt{\vartheta})}{\epsilon}\Big)\Big)^3,3\Big\}.    \label{inter-cmplx-rm}
\end{align}
On the other hand, when $K\ge2((1+\sqrt{\vartheta})/\epsilon)^3$, by the definition of $\{\mu_k\}_{k\ge0}$ in \cref{etak-Bk-rm} and the fact that $\kappa(K)$ is uniformly selected from $\{\lfloor K/2\rfloor,\ldots,K-1\}$, one has that $\mu_{\kappa(K)}=\mu_{\lfloor K/2\rfloor}=\epsilon/(1+\sqrt{\vartheta})$. Combining this with \cref{inter-cmplx-rm}, we obtain that \cref{complexity-rm} holds as desired, and the proof of this theorem is complete.
\end{proof}





\appendix








\bibliographystyle{abbrv}
\bibliography{ref}

\end{document}